\newcommand{\inob}{{\textsc{i}}}
\newcommand{\tmob}{\textsc{t}}
\newlength{\argwidth}
\newlength{\argheight}
\newlength{\accentshiftL}
\newlength{\accentshiftR}
\newlength{\accentwidth}
\newcommand\nh[1]{%
  \settowidth{\argwidth}{$#1$}%
  \settoheight{\argheight}{$#1$}%
  \setlength{\accentshiftL}{-\argwidth/2-\accentwidth/2}%
  \setlength{\accentshiftR}{\argwidth/2-\accentwidth/2}%
  \ifdim\argwidth<\accentwidth
  \setlength{\accentshiftR}{0pt}\fi%
  #1\hspace{\accentshiftL}\raisebox{\argheight+4.4pt}{\rotatebox{-90}{$($}}\hspace{\accentshiftR}
}
\newcommand{\sint}{{\textstyle\int}} 
\newcommand{\coef}{\,;\,} 
\newcommand{\inv}{{-1}}
\newcommand{\<}{\langle}
\renewcommand{\>}{\rangle}  
\newcommand{\Anglearrow}[1]{\rotatebox{#1}{$\Rightarrow$}}
\renewcommand{\id}{1}
\renewcommand{\Id}{1}
\DeclareMathOperator{\pb}{\mathrm{pb}}
\newcommand{\TO}{\Rightarrow}
\newcommand{\dar}{\downarrow}
\newcommand{\internalLaxComma}[2]{#1 \dar #2}
\NewDocumentCommand \laco
\newcommand{\opdar}{\dar^\op\hspace{.5pt}}
\newcommand{\internalOpLaxComma}[2]{#1 \opdar #2}
\NewDocumentCommand \oplaco
\newcommand{\internalLDAR}[2]{\De #1 \dar \nh{#2}}
\NewDocumentCommand \ldar
\newcommand{\internalOPRDAR}[2]{\nh{#1} \opdar \De \Id_{#2}}
\NewDocumentCommand \oprdar
\newcommand{\bsimp}{B}
\subjclass[2020]{19D23; 18D30, 18M05, 18N45, 18N10, 19D06, 55N25, 55P48}
\title{2-categorical opfibrations, Quillen's Theorem B, and $S^{-1}S$}
\date{May 2021}
\begin{document}

\begin{abstract}
  In this paper we show that the strict and lax pullbacks of a
2-categorical opfibration along an arbitrary 2-functor are homotopy equivalent.  We give two
applications.  First, we show that the strict fibers of an opfibration
model the homotopy fibers.  This is a version of Quillen's Theorem B
amenable to applications.  Second, we compute the $E^2$ page of a
homology spectral sequence associated to an opfibration and apply this
machinery to a 2-categorical construction of $S^{-1}S$.  We show that
if $S$ is a symmetric monoidal 2-groupoid with faithful translations
then $S^{-1}S$ models the group completion of $S$.

\end{abstract}

\maketitle

\tableofcontents

\section{Introduction}

Fibrations and opfibrations of 1-categories, also known as
Grothendieck fibrations and opfibrations (originally fibered and cofibered categories), were introduced in
\cite[Exp. VI]{sga1} for the application of categorical algebra to
descent problems.  If $P\cn C \to D$ is a fibration of 1-categories,
then there is a natural adjunction between $P^\inv(x)$ and the comma
category $x \dar P$.  This makes $P^\inv$ a contravariant
pseudofunctor from $C$ to $\Cat$ and moreover gives a homotopy
equivalence between the classifying spaces of~$P^\inv(x)$ and~$x \dar
P$.  Dually, if $P$ is an opfibration, then $P^\inv$ is a covariant
pseudofunctor to $\Cat$ and there is a homotopy equivalence on
classifying spaces between $P^\inv(x)$ and $P \dar x$.

In this paper we study both homotopical and categorical features of 
opfibrations for 2-categories and lax comma objects for 2-functors.  The former are dual to the fibrations of Buckley \cite{Buc14Fibred}, and the latter are due to Gray \cite{Gray80Closed,Gray80Existence}.
In \cref{sec:theorem_B} we
recall the relevant definitions and prove the following result as
\cref{prop:fiber_vs_htpyfiber}.
\begin{thm}\label{thm:intro-fiber_vs_htpyfiber}
  Suppose
  \[
  C \fto{P} D \xleftarrow{F} B
  \]
  is a cospan of 2-categories and all 2-cells of $D$ are invertible.
  If $P$ is an opfibration then the inclusion
  \[
  i\cn \pb(P,F) \to  \laco{P,F},
  \]
  from the pullback of $P$ and $F$ to the lax comma object $\laco{P,F}$,
  induces a homotopy equivalence on classifying spaces.
\end{thm}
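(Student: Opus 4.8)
The plan is to produce a retraction $r \cn \laco{P,F} \to \pb(P,F)$ from the opcartesian cleavage of $P$ and to show that $i$ and $r$ are mutually inverse up to transformations that are invisible to the classifying space. First I would fix a normalized cleavage for the opfibration $P$: for every object $c$ of $C$ and every $1$-cell $\phi \cn Pc \to d$ in $D$ an opcartesian lift $\phi_! \cn c \to \phi_\bullet c$ with $P(\phi_!) = \phi$, chosen so that $(\id_{Pc})_! = \id_c$. On objects set $r(c,\phi,b) = (\phi_\bullet c, b)$, where $\phi \cn Pc \to Fb$; since $P(\phi_\bullet c) = Fb$ this lands in the strict pullback.

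To extend $r$ to $1$- and $2$-cells I would use the universal property of opcartesian cells. Given a $1$-cell $(f,\alpha,g) \cn (c,\phi,b) \to (c',\phi',b')$ of $\laco{P,F}$, the composite $\phi'_! \circ f \cn c \to \phi'_\bullet c'$ lies over $\phi' \circ Pf$; the structure $2$-cell $\alpha \cn \phi' \circ Pf \TO Fg \circ \phi$ of $D$ relates this to $Fg \circ \phi$, and because every $2$-cell of $D$ is invertible it may be used in either direction. Feeding $\alpha^\inv$ into the opcartesian universal property of $\phi_!$ yields an essentially unique $1$-cell $\bar f \cn \phi_\bullet c \to \phi'_\bullet c'$ with $P(\bar f) = Fg$, together with an invertible comparison $\bar f \circ \phi_! \cong \phi'_! \circ f$ lying over $\alpha$, and $(\bar f, g)$ is the value of $r$ on $(f,\alpha,g)$. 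This is precisely the step that consumes the hypothesis that all $2$-cells of $D$ are invertible. The two-dimensional part of the opcartesian universal property (dual to Buckley) supplies $r$ on $2$-cells and the coherence isomorphisms making $r$ a normal pseudofunctor into the strict $2$-category $\pb(P,F)$.

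Next I would compare the two composites with the identities. By normalization of the cleavage, $r \circ i = \id$ on $\pb(P,F)$: an object $(c,b)$ becomes $(c,\id_{Fb},b)$ in $\laco{P,F}$, and $(\id_{Fb})_\bullet c = c$, so the induced factorizations are trivial on $1$- and $2$-cells as well. In the other order, $i \circ r(c,\phi,b) = (\phi_\bullet c, \id_{Fb}, b)$, and the opcartesian lift $\phi_! \cn c \to \phi_\bullet c$, paired with $\id_b$ and the identity structure $2$-cell, is a $1$-cell $(c,\phi,b) \to i\circ r(c,\phi,b)$ of $\laco{P,F}$. I would check, again from the universal property, that these $1$-cells are the components of a pseudonatural (in particular oplax) transformation $\eta \cn \id_{\laco{P,F}} \TO i \circ r$, its naturality $2$-cells being the comparisons produced above.

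Finally I would invoke the standard homotopical input that a pseudonatural — indeed any lax or oplax — transformation between $2$-functors induces a homotopy between the two maps it determines on classifying spaces. From $\eta$ we get $\bsimp(i \circ r) \simeq \bsimp(\id) = \id$ on $\bsimp\,\laco{P,F}$, while $r \circ i = \id$ gives $\bsimp(r \circ i) = \id$ on $\bsimp\,\pb(P,F)$; hence $\bsimp i$ and $\bsimp r$ are mutually inverse homotopy equivalences. The hard part will be the bookkeeping in the middle steps: verifying that the cleavage-based assignment $r$ is coherent on $2$-cells, so that it really is a pseudofunctor, and that $\eta$ satisfies the naturality and coherence axioms of an oplax transformation. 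Both reduce to the two-dimensional universal property of opcartesian cells, and it is the invertibility of the $2$-cells of $D$ that guarantees the factorizations used there exist.
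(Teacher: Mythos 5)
Your overall strategy coincides with the paper's proof of \cref{prop:fiber_vs_htpyfiber}: build a retraction from the lax comma object to the strict pullback out of opcartesian lifts, check that the composite with the inclusion is the identity on the nose, produce a pseudonatural transformation from the identity to the other composite whose components are the opcartesian lifts themselves, and conclude using the fact that (op)lax transformations realize to homotopies. The genuine gap is in your middle step: you assert that the opcartesian universal property of $\phi_!$ hands you a $1$-cell $\bar f$ with $P(\bar f) = Fg$ \emph{on the nose}. It does not. The lifting clause for an opcartesian $1$-cell (\cref{defn:opcart}~\eqref{it:opcart-1}, following Buckley) produces $\bar f$ only together with an invertible $2$-cell $\al_1 \cn Fg \cong P(\bar f)$, i.e.\ $\bar f$ lies over $Fg$ only up to isomorphism. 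Since the strict pullback $\pb(P,F)$ requires the equation $P(\bar f)=Fg$ exactly, the pair $(\bar f, g)$ need not be a $1$-cell of $\pb(P,F)$, and as written the construction of your retraction $r$ stalls here.

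The repair is precisely the second clause in the definition of opfibration, which you never invoke: $P$ is a local fibration, so the $2$-cell $\al_1\cn Fg \Rightarrow P(\bar f)$ admits a cartesian lift $\wh{\al}\cn \wh{f}\Rightarrow \bar f$ with $P\wh{\al}=\al_1$ and hence $P\wh{f}=Fg$ strictly; one then sends the $1$-cell of $\laco{P,F}$ to $(\wh{f},g)$. This correction propagates through the rest of the argument: the value of $r$ on $2$-cells must be obtained by combining the uniqueness clause of the opcartesian property with the pullback square characterizing cartesian $2$-cells (\cref{defn:cart-2}), and the pseudonaturality constraint of $\eta$ is not just the comparison $2$-cell coming from the opcartesian lift but its composite with a whiskering of $\wh{\al}$. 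None of this is fatal---it is exactly the extra work the paper's proof does---but the step as you wrote it would fail, and the local-fibration axiom is an essential ingredient your proposal omits.
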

\noindent We go on to give two applications of this result, outlined in
\cref{sec:intro-quillen-b,sec:intro-s-inv-s} below.

\subsection{Quillen's Theorem B for opfibrations of 2-categories}\label{sec:intro-quillen-b}

Our first application is related to Quillen's Theorems A and B
\cite{Qui1973Higher}. Theorem A states that under suitable conditions,
if the comma objects of a functor between categories are assumed to be
contractible, then the functor induces a homotopy equivalence on
classifying spaces.  It can be seen as a special case of Quillen's
Theorem B, which says that the comma objects of a functor model its
homotopy fibers.

Versions of Theorems A and/or B for higher categories have
appeared in many papers. For Theorem A, this includes Bullejos-Cegarra
\cite{BC03Geometry} (Theorem A for 2-categories) and Ara-Maltsiniotis \cite{AM18QuillenAI,AM20QuillenAII}
(Theorem A for strict $\infty$-categories). For Theorem B, this includes 
Cegarra \cite{Ceg11Homotopy} (Theorem B for 2-categories),
Calvo-Cegarra-Heredia \cite{CCH14Bicategorical} (Theorem B for
bicategories), 
 and Ara
\cite{Ara19QuillenB} (Theorem B for strict $\infty$-categories).

Combining the 2-categorical version of Quillen's Theorem B (see \cref{thm:cegarra} below) 
with \cref{thm:intro-fiber_vs_htpyfiber} gives the following form of
Quillen's Theorem B, which appears as
\cref{cor:quillenB-opfib-2}. This is a form that more closely
resembles the version used in $K$-theoretic applications.

\begin{cor}\label{cor:quillenB-opfib}
  Assume that all 2-cells of $D$ are invertible, and suppose $P\cn C \to D$
   is an opfibration such that, for every 1-cell $x
  \to y$ in $D$ the 2-functors $P^\inv(x) \to P^\inv(y)$ induced by
  base change are homotopy equivalences on classifying spaces.  Then
  for each $x \in C$ the sequence
  \[
  P^\inv(x) \to C \to D
  \]
  induces a homotopy fiber sequence on classifying spaces.
\end{cor}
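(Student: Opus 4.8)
The plan is to combine Theorem~\ref{thm:intro-fiber_vs_htpyfiber} with Cegarra's 2-categorical Theorem~B (cited as \cref{thm:cegarra}) in a way that is formally parallel to the classical deduction of Quillen's Theorem~B from the comma-category version.

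First I would invoke Cegarra's result. Under the stated hypotheses—all 2-cells of $D$ invertible, and base change $P^\inv(x) \to P^\inv(y)$ a homotopy equivalence for every 1-cell $x \to y$—the lax comma 2-category construction should model homotopy fibers. Concretely, I expect \cref{thm:cegarra} to say that for the 2-functor $P \cn C \to D$ and each object $x \in D$, the sequence
\[
\laco{P,x} \to C \to D
\]
induces a homotopy fiber sequence on classifying spaces, where $\laco{P,x}$ denotes the lax comma 2-category over the object $x$. The homotopy-invariance condition on base change is exactly what guarantees the ``locally constant'' hypothesis needed to apply the Theorem~B machinery.

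Next I would identify the strict fiber with the strict pullback. Taking $F$ to be the inclusion of the object $x$ (viewed as the terminal 2-category mapping into $D$), the pullback $\pb(P,F)$ is precisely the strict fiber $P^\inv(x)$, and the lax comma object $\laco{P,F}$ is precisely $\laco{P,x}$. Theorem~\ref{thm:intro-fiber_vs_htpyfiber} then applies directly: since all 2-cells of $D$ are invertible and $P$ is an opfibration, the inclusion
\[
i \cn P^\inv(x) \to \laco{P,x}
\]
induces a homotopy equivalence on classifying spaces. Combining this equivalence with the fiber sequence from Cegarra, I can replace $\laco{P,x}$ by $P^\inv(x)$ up to homotopy, yielding that
\[
P^\inv(x) \to C \to D
\]
is a homotopy fiber sequence.

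The main point requiring care—really the only nontrivial step in the deduction itself—is verifying that the homotopy equivalence $i$ is compatible with the maps to $C$ in the relevant sense: the triangle with vertices $P^\inv(x)$, $\laco{P,x}$, and $C$ must commute (at least up to homotopy) so that the equivalence on fibers is genuinely an equivalence of fiber sequences and not merely of the fiber spaces in isolation. This compatibility should follow because the inclusion $i$ commutes strictly with the projections to $C$, both being induced by forgetting the comma data. Granting that, the deduction is purely formal; the genuine mathematical content lives entirely in Theorem~\ref{thm:intro-fiber_vs_htpyfiber} and in Cegarra's theorem, so I would expect the proof of this corollary to be short.
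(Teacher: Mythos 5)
Your proposal matches the paper's proof: the paper states Cegarra's Theorem B (\cref{thm:cegarra}) for the lax comma objects $\laco{P,\nh{x}}$ and then simply observes that \cref{prop:fiber_vs_htpyfiber} allows one to replace $\laco{P,\nh{x}}$ by the strict fiber $P^\inv(x)$, the inclusion $i$ commuting with the projections to $C$ just as you say. The only point worth flagging is that Cegarra's hypothesis concerns base change on the lax comma objects rather than on the strict fibers, but the same equivalence $i$, together with its compatibility with base change (cf.\ \cref{rmk:Finv-base-change}), transports one hypothesis to the other, so your deduction goes through as written.
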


Chiche \cite{chiche} defines a notion of pre-(coop)fibration for which the conclusion of \cref{thm:intro-fiber_vs_htpyfiber} holds in the special case in which $P$ is the inclusion of a single object.  However the comparison between our opfibrations and the definitions of Chiche is nontrivial and beyond the scope of this work.

Two-dimensional (op)fibrations in the sense we study here have appeared in work of
Hermida \cite{Her99Some} (for 2-categories), Bakovic \cite{bakovicFib}
(for bicategories), and Buckley \cite{Buc14Fibred} (with additional
results for both 2-categories and bicategories).  However, a result
like \cref{thm:intro-fiber_vs_htpyfiber} has not previously appeared, and
therefore nor has \cref{cor:quillenB-opfib}.  Identifying the
conditions that
\begin{enumerate}
  \renewcommand{\labelenumi}{(\textit{\alph{enumi}})}
  \item allow one to model homotopy fibers via strict
    fibers and
  \item occur in applications of interest
\end{enumerate}
was a major motivation for all of the results in this paper.

Striking this balance allows us to analyze the homology spectral
sequence of an opfibration in \cref{sec:spectral-sequence}.  We apply
the theory of lax comma objects and opfibrations to give an explicit
description of the $E^2$ page in the following result, which appears as \cref{thm:ss}.
\begin{thm}\label{thm:intro-ss}
  If $P \cn C \to D$ is an opfibration and all 2-cells of $D$ are
  invertible, then there is a spectral sequence
 \[
  E^2_{p,q} = H_p\big( D \coef \sH_q P^\inv \big) \Rightarrow H_{p+q} (C)
 \]
 where $\sH P^\inv$ denotes the local coefficient system given by
 homology of the fibers of $P$.
\end{thm}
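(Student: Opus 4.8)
The plan is to realize the classifying space of $C$ as a homotopy colimit over $D$ of the classifying spaces of the strict fibers $P^\inv$, and then to run the standard homology spectral sequence attached to such a homotopy colimit. The link between the strict fibers and their homotopically robust counterparts, the lax comma objects, is supplied by \cref{thm:intro-fiber_vs_htpyfiber}: taking $F$ to be the inclusion of an object $x$ of $D$, that result gives a homotopy equivalence $P^\inv(x) \simeq \laco{P, x}$, since all 2-cells of $D$ are invertible. Because $P$ is an opfibration, the base-change 2-functors $P^\inv(x) \to P^\inv(y)$ assemble into a pseudofunctor on $D$, which after taking classifying spaces yields the diagram whose homotopy colimit I want to analyze.

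Concretely, I would model this homotopy colimit by the two-sided bar construction: the simplicial space $Z_\bullet$ whose space of $p$-simplices is the coproduct, indexed by the chains $x_0 \to \cdots \to x_p$ in (the nerve of) $D$, of the classifying space $B\,P^\inv(x_0)$ of the fiber over the initial object. The base-change pseudofunctoriality furnishes the simplicial operators in this direction, and the realization $\lvert Z_\bullet \rvert$ is by construction the homotopy colimit $\operatorname*{hocolim}_{D} B\,P^\inv$.

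The spectral sequence then arises by filtering $\lvert Z_\bullet \rvert$ by skeleta and computing homology first in the fiber direction. This gives
\[
  E^1_{p,q} \;=\; \bigoplus_{x_0 \to \cdots \to x_p} H_q\big( P^\inv(x_0) \big),
\]
and I would identify the differential $d^1$ with the alternating sum of the simplicial faces, with the face $d_0$ twisted by the induced base-change map on $H_q$. This exhibits $\big(E^1_{\bullet,q}, d^1\big)$ as exactly the complex computing $H_p\big( D \coef \sH_q P^\inv \big)$, the homology of $D$ with coefficients in the local system $\sH_q P^\inv$ given by fiberwise homology; passing to $E^2$ produces the asserted identification of the $E^2$ page.

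For convergence it remains to identify the total space $\lvert Z_\bullet \rvert$ with the classifying space of $C$, and I expect this to be the main obstacle. One must verify that the fibers, assembled over the simplices of $D$ by the bar construction, genuinely reconstruct $C$ up to homotopy; this is where \cref{thm:intro-fiber_vs_htpyfiber} is applied fiberwise, replacing each strict fiber by the corresponding lax comma object whose total assembly is visibly equivalent to $C$, and where the coherence of the base-change pseudofunctor must be controlled. Granting the equivalence $\lvert Z_\bullet \rvert \simeq B\,C$, the resulting spectral sequence is first-quadrant, hence convergent, and abuts to $H_{p+q}(C)$ as claimed.
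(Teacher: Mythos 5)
Your proposal has the right overall shape---a two-sided bar-type bisimplicial object, homology in the fiber direction first, identification of $(E^1_{\bullet,q},d^1)$ with the complex computing $H_p(D\coef \sH_q P^\inv)$, and \cref{prop:fiber_vs_htpyfiber} to pass between strict fibers and lax comma objects---but it has a genuine gap exactly where you flag "the main obstacle," and the route you sketch for closing it does not work as stated. First, your construction of $Z_\bullet$ presupposes that the strict fibers $P^\inv(x)$ and the base-change maps assemble into a (pseudo)functorial diagram on $D$ (or on $\sint ND$). For a 2-categorical opfibration this is not free: base change is defined by choices of opcartesian lifts, and the coherence data needed to make $x\mapsto BP^\inv(x)$ into a diagram indexed by the nerve of a 2-category (whose $p$-simplices carry 2-cell data, not just chains $x_0\to\cdots\to x_p$) is substantial and is never constructed in your argument. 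Second, even granting the diagram, the identification $|Z_\bullet|\simeq BC$ is a Thomason-type theorem for 2-categorical opfibrations; saying that the lax comma objects' "total assembly is visibly equivalent to $C$" is an assertion of the theorem, not a proof of it.

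The paper avoids both issues by never making the fibers functorial. It defines the bisimplicial set $\bsimp_{p,q}$ directly from the 2-functor $P$ as triples $(\om,\de,\si)$ of 2-functors out of orientals (\cref{defn:E0}), so no choices of lifts or coherence cells are needed. The identification of the abutment is then done with the \emph{other} filtration: fixing $\om\in N_qC$, the $(\de,\si)$ form the nerve of an oplax comma object $\oprdar{(F\circ\om),D}$ (\cref{lem:q-filtration}), which is contractible because $\sO(p)$ has an oplax terminal object (\cref{cor:lp-terminal}); hence the horizontal spectral sequence collapses and $H_*(\mathrm{Tot}\,E^0)\iso H_*(C)$. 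Only after that does the vertical filtration produce $E^1_{p,q}=\bigoplus_{\si\in N_pD}H_q(\ldar{F,\si})$ via \cref{lem:p-filtration}, with the lax comma objects giving the local coefficient system for free (\cref{prop:loc-coeff}); the opfibration hypothesis and \cref{prop:fiber_vs_htpyfiber} enter only at the very end, through \cref{lem:lp-initial} and \cref{cor:two_isos}, to replace $H_q(\ldar{F,\si})$ by $H_q(F^\inv(\si(0)))$. To repair your argument you would either need to prove the 2-categorical Grothendieck/Thomason comparison you are implicitly invoking, or restructure it along the paper's lines so that the comparison with $BC$ comes from collapsing the transposed spectral sequence rather than from a hocolim decomposition of $C$.
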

\noindent The development of the homology spectral sequence for a
2-functor in \cref{sec:spectral-sequence} generalizes the
corresponding theory for 1-categorical opfibrations, and might be
conceptually familiar.


\subsection{Group completion and the $S^\inv S$ construction}\label{sec:intro-s-inv-s}

Our second application relates to Quillen's categorical model for topological group
completion. The primary difference between an arbitrary $E_{\infty}$ space and the
zeroth space of an $\Om$-spectrum occurs at the level of path components.
The action of an $E_{\infty}$ operad on a space $X$ induces a multiplication on 
$\pi_0 X$, and the fact that the operad is $E_{\infty}$ equips $\pi_0 X$
 with the structure of a commutative
monoid. If $Y = Y_0$ is the zeroth space of an $\Om$-spectrum $\{Y_n\}$, then
the isomorphism of monoids $\pi_0 Y \cong \pi_n Y_n$ for any $n > 1$ shows that $\pi_0 Y$ is actually an
abelian group.  We say that an $E_{\infty}$
space $X$ is \emph{grouplike} if $\pi_0X$ is a group.
\emph{Topological group completion} \cite{Bar1961Note,BP1972Homology,May1974Einfty,Seg74Categories,McDS75Homology} is a process that 
\begin{itemize}
\item universally completes the commutative monoid
$\pi_0 X$ to an abelian group,
\item similarly completes the homology of $X$,
\item but in the process radically changes the higher homotopy groups of $X$.
\end{itemize}


For a symmetric monoidal category $S$ with invertible morphisms and
faithful translations, Quillen's $S^\inv S$ construction
\cite{Gra1976Higher} provides a categorical model for the
topological group completion of the classifying space $BS = |NS|$.  In the case
that $S$ is a commutative monoid treated as a discrete symmetric
monoidal category, $S^\inv S$ is equivalent (as a 
symmetric monoidal category) to the algebraic group completion 
$S^{\textrm{gp}}$---the target of a universal monoid homomorphism
from $S$ into an abelian group. In this sense, topological group 
completion generalizes algebraic group completion.

The 1-categorical $S^\inv S$ construction is crucial for the
definition and study of higher algebraic $K$-theory.  In the case that
$R$ is a commutative ring and $S = \coprod_n GL_n(R)$, then the main
result of \cite{Gra1976Higher} provides a homotopy equivalence
\begin{equation}\label{eq:SinvS-eq-plus3}
B(S^\inv S) \fto{\hty} K_0(R) \times BGL(R)^+.
\end{equation}
In the low-dimensional cases $n=1,2$, the categorical structure of
$S^\inv S$ provides direct algebraic access to the $K$-theory groups
$K_n(R) = \pi_n BGL(R)^+$ for $n>0$.

As our second application, we obtain \cref{thm:main} below,
generalizing the construction and results of \cite{Gra1976Higher} to
the case in which $S$ is a symmetric monoidal bicategory.  We assume
without loss of generality that $S$ is in a strict form known as a
permutative Gray monoid.  In \cref{sec:pgms} we recall this notion and
the result that each symmetric monoidal bicategory is equivalent, via
a symmetric monoidal pseudofunctor, to a permutative Gray monoid
(\cref{thm:sm2cat-equiv-to-pgm}).  For simplicity we state the main
result in these terms.
\begin{thm}\label{thm:main}
  Let $S$ be a permutative Gray monoid.  There is a symmetric
  monoidal 2-category $S^\inv S$ and a symmetric monoidal
  2-functor
  \[
  i\cn S \to S^\inv S
  \]
  with the following properties.
  \begin{enumerate}
  \item\label{main:i}  The classifying space of $S^\inv S$ is
    grouplike.
  \item\label{main:ii} If $S$ is, moreover, a 2-groupoid with faithful
    translations, then $i$ is a group completion on classifying
    spaces.
  \end{enumerate}
\end{thm}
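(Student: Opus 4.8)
The plan is to construct $S^\inv S$ explicitly, dispatch the grouplike statement by hand, and reduce the group completion statement to a homology computation carried out with the opfibration machinery of the preceding sections. Following the pattern of Quillen's $1$-categorical construction \cite{Gra1976Higher}, I would take the objects of $S^\inv S$ to be pairs $(a,b)$ of objects of $S$, with a $1$-cell $(a,b) \to (c,d)$ given by an object $s$ of $S$ together with $1$-cells $s \oplus a \to c$ and $s \oplus b \to d$, and with $2$-cells recording the coherent translation data; the permutative Gray monoid structure on $S$ induces one on $S^\inv S$ acting in both coordinates, and $i$ sends $a$ to $(e,a)$ for $e$ the unit. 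That $i$ is a symmetric monoidal $2$-functor is then a (lengthy but routine) coherence check.

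For assertion \eqref{main:i} I would argue directly that $\pi_0 B(S^\inv S)$ is a group. The object $(b,a)$ is an inverse to $(a,b)$: their sum $(a \oplus b,\, b \oplus a)$ is joined to the diagonal object $(a \oplus b,\, a \oplus b)$ using the symmetry $1$-cell of the permutative structure, and any diagonal object $(x,x)$ is joined to the unit $(e,e)$ by the translation $1$-cell determined by $s = x$. Hence every element of the commutative monoid $\pi_0 B(S^\inv S)$ has an inverse, so $B(S^\inv S)$ is grouplike. This uses only the symmetric monoidal structure and no hypothesis on $S$.

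For assertion \eqref{main:ii} the strategy is to verify the hypotheses of the group completion theorem \cite{McDS75Homology}: having established that the target is grouplike, it suffices to show that $i$ induces an isomorphism $H_*(BS)[\pi_0^\inv] \cong H_*\big(B(S^\inv S)\big)$, realizing the target homology as the localization of $H_*(BS)$ at the translation action of $\pi_0 BS$. To compute the right-hand side I would exhibit $S^\inv S$ as (the total $2$-category of) an opfibration $P \cn S^\inv S \to D$ whose strict fibers are copies of $S$, whose base-change $2$-functors are the translations $a \mapsto s \oplus a$, and whose base $D$ is a filtered $2$-category cofinal in the translation system of $\pi_0 S$. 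Here the hypotheses of assertion \eqref{main:ii} do their work: faithfulness of the translations is what identifies the strict fibers of $P$ with $S$ and makes the pullback and lax comma models agree via \cref{thm:intro-fiber_vs_htpyfiber}, while the $2$-groupoid hypothesis guarantees that all $2$-cells of $D$ are invertible, so that \cref{thm:intro-ss} applies and yields a spectral sequence $E^2_{p,q} = H_p\big(D \coef \sH_q P^\inv\big) \Rightarrow H_{p+q}(S^\inv S)$. The local system $\sH_q P^\inv$ is $H_q(BS)$ carried by the translation action; since $D$ is filtered, its homology with these coefficients vanishes for $p > 0$ and is the colimit $H_q(BS)[\pi_0^\inv]$ for $p = 0$, so the spectral sequence collapses to the desired isomorphism.

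The main obstacle is the construction underlying the previous paragraph: producing the opfibration $P$ with strict fibers $S$ and total $2$-category $S^\inv S$---equivalently, identifying $B(S^\inv S)$ with the mapping telescope of translations---and verifying that the translation action appearing in $\sH_q P^\inv$ is precisely the action on $H_*(BS)$ whose localization is $H_*(BS)[\pi_0^\inv]$. This is exactly the point at which faithful translations are indispensable, since without them the strict fibers need not compute the homotopy fibers and the colimit in $E^2_{0,q}$ would be a lossy approximation rather than the algebraic localization. Once this identification is in place, the collapse of the spectral sequence and the group completion theorem combine to give assertion \eqref{main:ii}.
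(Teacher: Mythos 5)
Your construction of $S^\inv S$ and of $i$ matches \cref{defn:SinvX}, and your direct $\pi_0$-level argument for claim (1) is correct and a little more elementary than the paper's (which proves the stronger statement that every translation $(a,x)\oplus -$ is a homotopy equivalence of classifying spaces, \cref{prop:S-inv-S-pgm}). The genuine gap is in claim (2), and it is exactly the step you flag as "the main obstacle." The opfibration you need is the projection $\rho\cn S^\inv S \to S^\inv *$ induced by $S \to *$: its strict fibers are copies of $S$ (\cref{lem:pi_inv_is_X}, which needs no faithfulness) and its base is contractible when the 2-cells of $S$ are invertible (\cref{lem:s-inv-pt-contract}). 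But $S^\inv *$ is not a filtered 2-category cofinal in the translation system of $\pi_0 S$, and no such filtered base with total 2-category $S^\inv S$ exists in general: a 1-cell $a \to b$ in $S^\inv *$ is a pair $(s,\al\cn sa\to b)$, so the base carries precisely the nontrivial endomorphisms and 2-cells that invalidate the filtered-colimit computation of $E^2_{p,q}$ you propose. In particular $H_p\big(S^\inv * \coef \sH_q\rho^\inv\big)$ does \emph{not} vanish for $p>0$ in general, even though $|NS^\inv *|$ is contractible, because the coefficient system is not morphism-inverting and therefore does not descend to a local system on that contractible space.

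The missing idea is to localize the whole spectral sequence of \cref{thm:ss} at $\pi_0 S$ \emph{before} collapsing it. Localization is exact, so $[\pi_0 S]^\inv E^2_{p,q} \cong H_p\big(S^\inv * \coef [\pi_0 S]^\inv \sH_q\rho^\inv\big)$ converges to $[\pi_0 S]^\inv H_{p+q}(S^\inv S)$; the localized coefficient system is morphism-inverting, hence by \cref{lem:morphism-inverting-coeff} it computes homology of the contractible space $|NS^\inv *|$ and vanishes for $p>0$. One then uses that $\pi_0 S$ already acts invertibly on $H_*(S^\inv S)$ (\cref{prop:action-on-X-invertible}) to strip the localization from the abutment, and identifies the edge map with the localization of $i_*$; this is \cref{thm:main_technical}. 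Two smaller corrections: faithful translations is not what identifies the fibers with $S$, nor what makes \cref{thm:intro-fiber_vs_htpyfiber} apply --- it enters, together with invertibility of 1- and 2-cells, in the uniqueness clauses of the lifting properties when proving that $\rho$ is an opfibration (\cref{prop:pi_opfib}); and once the homology isomorphism $[\pi_0 S]^\inv H_*(S) \iso H_*(S^\inv S)$ is established there is no need to invoke the group completion theorem of \cite{McDS75Homology}, since that isomorphism (with its $H_0$ consequence) is the definition of group completion used here (\cref{defn:gp_comp}).
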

\noindent Claim \eqref{main:i} is proved in \cref{prop:S-inv-S-pgm}
by direct analysis of the construction $S^\inv S$.  Claim
\eqref{main:ii} follows by applying \cref{thm:intro-ss} to the
opfibration
\[
S^\inv S \to S^\inv *.
\]

\begin{warn}
 As it is an easy source of confusion, we emphasize that our construction concerns symmetric monoidal bicategories and not symmetric bimonoidal categories. The latter are 1-dimensional categories with two symmetric monoidal structures, one of which distributes over the other. As pointed out by Thomason \cite{Tho80Phony}, the classical $S^\inv S$ construction on a symmetric bimonoidal category fails to have a bimonoidal structure.
\end{warn}

\subsection{Relation to other forms of group-completion}
We relate \cref{thm:main} back to the program in
\cite{GJO2019dimensional}.  Recall that a stable $P_2$-equivalence is
a symmetric monoidal functor inducing an isomorphism on stable
homotopy groups in dimensions 0, 1, and 2.  A Picard 2-category
(\cref{defn:picard}) is a symmetric monoidal 2-category in which all
objects and morphisms are invertible in the weak sense.  In
\cite{GJO2019dimensional} we proved that every symmetric monoidal
bicategory is stably $P_2$-equivalent to a strict Picard
2-category. This strict Picard 2-category is a categorical model for
the Postnikov 2-truncation of the spectrum associated to the original
symmetric monoidal bicategory. The construction in
\cite{GJO2019dimensional} involves a long zigzag of stable
$P_2$-equivalences, and employs a topological group completion of
$E_{\infty}$-spaces. One outcome of the present paper is an alternate
construction of the 2-truncation in simpler and purely categorical
terms.  The caveat is that such a construction is only available under
the additional assumption of faithful translations. We do not know a
general method for imposing the condition of faithful translations on
a general symmetric monoidal category or 2-category while preserving
its stable homotopy type, other than the full machinery developed in
\cite{GJO2019dimensional}.

\begin{oprob}
  Given a symmetric monoidal (2-)category $M$, find a purely algebraic
  functorial construction of a symmetric monoidal (2-)category
  $M^{ft}$ with faithful translations together with a (perhaps only
  pseudo or lax) natural stable equivalence $M \to M^{ft}.$
\end{oprob}

In the presence of faithful translations, we can use the results in
this paper and \cite{GJO2019dimensional} to construct an explicit
2-truncation as follows. If $S$ is a permutative Gray monoid which is
a 2-groupoid with faithful translations, then $S \to S^\inv S$ is a
homotopy group completion by \cref{thm:main}. Section 5.1 of
\cite{GJO2019dimensional} defines a 2-groupoidification functor $WN$
such that $WN(S^\inv S)$ is a symmetric monoidal 2-groupoid which
receives a natural stable $P_2$-equivalence $ S^\inv S \to WN(S^\inv
S)$.  We note that $WN(S^\inv S)$ is a symmetric monoidal 2-category
but generally not a permutative Gray monoid even when $S$ is assumed
to be so.  Nevertheless, the composite $S \to S^\inv S \to WN(S^\inv
S)$ is a stable $P_2$-equivalence, and $WN(S^\inv S)$ is a Picard
2-category by construction.  Therefore the classifying space of
$WN(S^\inv S)$ is the stable Postnikov 2-truncation of $BS$.

We point out that there is another, purely algebraic, notion of group
completion one might study for symmetric monoidal categories or
2-categories.  We want to distinguish between the fundamentally
homotopic notion and the purely algebraic one. The algebraic group
completion can be constructed formally via factorization system
theory. The category of groups sits as a reflective subcategory of the
category of monoids, and the group completion of a monoid is given by
applying the left adjoint to this inclusion. This left adjoint can be
seen as an application of the small object argument for the weak
factorization system cofibrantly generated by the single map
$\mathbb{N} \to \mathbb{Z}$, and so the group completion of a monoid
is then the fibrant replacement for this factorization system.

The algebraic group completion for symmetric monoidal 1-categories,
respectively 2-categories, is given by applying an analogous left
adjoint.  It is more complicated to write down explicitly and even
less computable. Such a completion takes values in Picard categories,
respectively Picard 2-categories.  From the perspective of algebraic
weak factorization systems \cite{GT2006Natural, BG2016AlgebraicI,
  BG2016AlgebraicII} this construction must exist and is given, in
dimension two, by finding generating maps $i_0, i_1, i_2$ such that
lifting against the unique map to the terminal 2-category with respect
to $i_{\al}$ exhibits all $\al$-dimensional cells as invertible. The
codomain of $i_0$ is then the free Picard 2-category generated by a
single object, and the authors will give an explicit model for this
object in a future paper. In contrast, the group completion $S^\inv S$
we study below is defined in a way that is unexpected from a purely
categorical perspective, but nevertheless models the topological group
completion and requires less intense 2-categorical machinery to
construct.

\addtocontents{toc}{\SkipTocEntry}
\subsection*{Outline}
The rest of this paper is organized as follows.  In
\cref{sec:theorem_B} we provide 2-categorical background, define what
it means for a 2-functor to be an opfibration, and establish \cref{thm:intro-fiber_vs_htpyfiber} and
\cref{cor:quillenB-opfib}.
\cref{sec:spectral-sequence} contains the construction of a spectral
sequence for an opfibration, and we analyze the local coefficients
arising from the lax comma object and strict pullback.  In
\cref{sec:SinvS} we give the necessary background on symmetric
monoidal 2-categories and construct the 2-categorical version of
$S^\inv S$. We then prove part \eqref{main:ii} of \cref{thm:main} via
a collapsing spectral sequence argument generalizing that of
\cite{Gra1976Higher}.

\section{Opfibrations and Quillen's Theorem B}\label{sec:theorem_B}

This section has two primary goals: to introduce the 2-categorical
notion of opfibration, and to study its homotopy-theoretic
properties. We begin with 2-categorical background in
\cref{sec:conventions} and then in \cref{sec:opfibrations} give the
definition of opfibration.  We define the lax and oplax comma objects
for a cospan of 2-functors in \cref{sec:comma-objs}.
\Cref{sec:comparison} is the technical heart of this paper; its main
result is \cref{prop:fiber_vs_htpyfiber}, which compares the strict
pullback and lax comma object for an opfibration.  We apply
\cref{prop:fiber_vs_htpyfiber} at the end of \cref{sec:comparison} to
prove \cref{cor:quillenB-opfib-2}: the strict fibers of an opfibration
model the homotopy fibers on classifying spaces.  This is Quillen's
Theorem B for opfibrations.  Looking ahead, we will also apply
\cref{prop:fiber_vs_htpyfiber} in \cref{sec:spectral-sequence} to
identify the $E^2$ page of the homology spectral sequence for an
opfibration.  That, in turn, is essential for our study of $S^\inv S$
in \cref{sec:SinvS}.

\subsection{2-categorical definitions and conventions}\label{sec:conventions}
To fix terminology and notation, we give the following overview of our
conventions.  We refer the reader to
\cite{benabou,Lac10Companion,johnson-yau} for further background on
2-categories and bicategories.
\begin{itemize}
\item We write $\iicat$ for the category of 2-categories and 2-functors.
\item We write $[n]$ for the poset $0 \leq 1 \leq \cdots \leq n$
  treated as a category or locally discrete 2-category.
\item We write $e$ for the unit object of a monoidal
  2-category, use 1 for identity 1-cells and 2-cells, and
  omit subscripts on identities unless they add clarity.
\item We usually denote the composition of 1-cells and vertical
  composition of 2-cells by juxtaposition, but use $\circ$ when
  necessary for either readability or clarity.
    
\item We write the horizontal composition of appropriately composable
  2-cells as $\al'*\al$.  If either $\al$ or $\al'$ is an identity
  2-cell we write
  \[
  \al' * f = \al' * 1_f \mathrm{\ and\ } g * \al = 1_g * \al
  \]
  for the whiskerings.
\end{itemize}

\begin{defn}\label{defn:functor}
  Let $X, Y$ be 2-categories. A \emph{lax functor} $F \cn X \to Y$ consists of 
  \begin{itemize}
  \item a function on objects $F \cn \ob X \to \ob Y$,
  \item functors $X(x, x') \to Y(Fx, Fx')$ for all objects $x,x'\in X$,  
  \item 2-cells $F_2 \cn Fg \circ Ff \Rightarrow F(g \circ f)$ for all
    composable pairs of 1-cells $g$ and $f$, natural in both
    arguments, and
  \item 2-cells $F_0 \cn 1_{Fx} \Rightarrow F(1_x)$ for all objects $x \in X$.
  \end{itemize}
  These are subject to three axioms, one for composable triples of
  1-cells and two for composing a 1-cell with an identity on either
  side.

  An \emph{oplax functor} $F \cn X \to Y$ consists of 
  \begin{itemize}
  \item a function on objects $F \cn \ob X \to \ob Y$,
  \item functors $X(x, x') \to Y(Fx, Fx')$ for all objects $x,x'\in X$,  
  \item 2-cells $F_2 \cn F(g \circ f) \Rightarrow Fg \circ Ff$ for all
    composable pairs of 1-cells $g$ and $f$, natural in both
    arguments, and
  \item 2-cells $F_0 \cn F(1_x) \Rightarrow 1_{Fx}$ for all objects $x \in X$.
  \end{itemize}
  These are subject to three analogous axioms.

  We say that a lax or oplax functor $F$ is a \emph{pseudofunctor} if
  the structure 2-cells $F_2, F_0$ are all isomorphisms.  We say that
  a lax or oplax functor $F$ is \emph{normal} if the 2-cells $F_0$ are
  identities for all $x$. We note that for lax functors between 2-categories, the condition that $F_0$
  is the identity implies that $F_2$ is also the identity when either $f$ or $g$ is 
  an identity 1-cell, but should be required separately for a lax functor between bicategories.
\end{defn}

\begin{defn}\label{defn:lax_trans}
  Given 2-categories $X$ and $Y$ together with a pair of 2-functors $F, G \cn X
  \to Y$, a \emph{lax transformation} $\al \cn F \Rightarrow G$,
  consists of
  \begin{itemize}
  \item 1-cells $\al_x \cn Fx \to Gx$ for all objects $x \in X$ and
  \item 2-cells $\al_f \cn Gf \circ \al_x \Rightarrow \al_y \circ Ff$
    for all 1-cells $f \cn x \to y$ in $X$.
  \end{itemize}
  These are subject to axioms stating that $\al_f$ is natural in
  2-cells $\de \cn f \Rightarrow g$, that $\al_1$ is the identity
  2-cell, and that $\al_{gf}$ is the appropriate pasting of $\al_f$
  and $\al_g$.
  
  A \emph{pseudonatural transformation} $\al$ is a lax transformation
  such that $\al_f$ is an invertible 2-cell for all $f$.  We say that
  $\al$ is \emph{2-natural} if $\al_f$ is an identity 2-cell for all
  $f$.  An \emph{oplax transformation} $\al \cn F \Rightarrow G$,
  where $F, G \cn X \to Y$ are 2-functors, consists of
  \begin{itemize}
  \item 1-cells $\al_x \cn Fx \to Gx$ for all objects $x \in X$ and
  \item 2-cells $\al_f \cn \al_y \circ Ff \Rightarrow Gf \circ \al_x$
  \end{itemize}
  subject to analogous axioms.
  \end{defn}
  
\begin{defn}\label{defn:modif}
  A \emph{modification} $\Ga \cn \al \Rrightarrow \al'$, where $\al,
  \al'$ are either both lax or oplax transformations with common
  source and target, consists of 2-cells $\Ga_x \cn \al_x \Rightarrow
  \al'_x$ for all objects $x \in X$ subject to one axiom stating that
  the cells $\Ga_x, \Ga_y$ are compatible with $\al_f, \al'_f$ for all
  1-cells $f \cn x \to y$.
\end{defn}

\begin{notn}\label{lax_oplax_homs}
  Let $X$ and $Y$ be 2-categories.
  \begin{enumerate}
  \item We write $[X, Y]$ to denote the 2-category of 2-functors,
    2-natural transformations, and modifications from $X$ to $Y$.
  \item We write $\lax(X, Y)$
    to denote the 2-category of 2-functors, lax
    transformations and modifications from $X$ to $Y$.
  \item We write $\oplax(X, Y)$
    to denote the 2-category of 2-functors, oplax
    transformations and modifications from $X$ to $Y$. 
  \end{enumerate}
\end{notn}

\begin{defn}\label{defn:nopl-nerve}
  For a 2-category $X$ we let $NX$ denote the normal oplax nerve: its
  $p$-simplices are the normal oplax functors $[p] \to X$, with $[p]$
  regarded as a locally discrete 2-category.  We call $NX$ the
  \emph{nerve} of $X$, and its geometric realization $|NX|$ the
  \emph{classifying space} of $X$.
\end{defn}
This is one of several homotopy equivalent nerves for 2-categories described in
\cite{CCG10Nerves}.  In \cref{sec:spectral-sequence} we will give an
alternate equivalent description of $NX$ using the oriented simplices
of Street \cite{Str87Algebra}.  We make repeated use of the following
result
\begin{thm}[\cite{CCG10Nerves}]\label{thm:trans-htpy}
  The geometric realization of a lax or oplax natural transformation
  is a homotopy between the realizations of its source and target
  2-functors.
\end{thm}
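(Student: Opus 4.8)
The plan is to produce the homotopy through the standard \emph{cylinder correspondence}: an oplax (resp.\ lax) transformation between $2$-functors $F, G \cn X \to Y$ is exactly the data of a single normal oplax (resp.\ lax) functor out of the cylinder $X \times [1]$ whose restrictions over the two objects of $[1]$ are $F$ and $G$. Applying the nerve to this cylinder functor yields a map $|NX| \times |N[1]| \to |NY|$, and since $|N[1]|$ is the interval this is precisely a homotopy from $|NF|$ to $|NG|$. I treat the oplax case in detail and reduce the lax case to it by duality at the end.

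So let $\alpha \cn F \Rightarrow G$ be an oplax transformation, with $1$-cell components $\alpha_x \cn Fx \to Gx$ and structure $2$-cells $\alpha_f \cn \alpha_y \circ Ff \Rightarrow Gf \circ \alpha_x$ as in \cref{defn:lax_trans}. First I would assemble a normal oplax functor $\bar\alpha \cn X \times [1] \to Y$. On objects set $\bar\alpha(x,0) = Fx$ and $\bar\alpha(x,1) = Gx$. On hom-categories $\bar\alpha$ agrees with $F$ over $0 \leq 0$ and with $G$ over $1 \leq 1$, while over $0 \leq 1$ it sends a $1$-cell $f \cn x \to y$ to $\alpha_y \circ Ff$ and a $2$-cell $\delta$ to $1_{\alpha_y} * F\delta$; there are no $1$-cells over $1 \leq 0$. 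Since $F$ and $G$ are strict, the only nonidentity oplax constraints $\bar\alpha_2$ arise for composites that cross the boundary, where $\bar\alpha_2$ is given by whiskering $\alpha_g$ with $Ff$, producing $\alpha_z \circ Fg \circ Ff \Rightarrow Gg \circ \alpha_y \circ Ff$. The functor $\bar\alpha$ is normal, and its oplax unit and associativity axioms reduce exactly to the axioms for $\alpha$ from \cref{defn:lax_trans} (naturality of $\alpha_f$ in $2$-cells, $\alpha_1 = 1$, and the pasting formula for $\alpha_{gf}$).

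Next I would apply the nerve $N$ of \cref{defn:nopl-nerve}. Although $N$ is a priori defined using $2$-functors, it is in fact functorial for all normal oplax functors: postcomposing a normal oplax functor $[p] \to X \times [1]$ with $\bar\alpha$ again yields a normal oplax functor $[p] \to Y$, because normal oplax functors are closed under composition, and this assignment commutes with the simplicial operators, which act by precomposition along the maps $[q] \to [p]$. Hence $\bar\alpha$ induces a simplicial map $N\bar\alpha \cn N(X \times [1]) \to NY$. Moreover $N$ preserves products, since a normal oplax functor $[p] \to X \times [1]$ is precisely a pair of normal oplax functors into the two factors; thus $N(X \times [1]) \cong NX \times N[1]$, with $N[1]$ the standard $1$-simplex. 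As geometric realization preserves products (one factor being the finite complex $N[1]$), the realization of $N\bar\alpha$ is a map
\[
|NX| \times |N[1]| \;\cong\; |N(X \times [1])| \fto{\,|N\bar\alpha|\,} |NY|,
\]
and $|N[1]| \cong \Delta^1$. Because $\bar\alpha$ restricts to $F$ over $0$ and to $G$ over $1$, this map restricts to $|NF|$ and $|NG|$ on the two ends, so it is the required homotopy.

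The one genuine subtlety, and the main point to get right, is the matching of variances. An oplax transformation produces an oplax cylinder functor, which is exactly what the normal oplax nerve $N$ accepts; a lax transformation instead produces a normal lax cylinder functor. To handle the lax case I would invoke the $2$-cell-reversing duality $(-)^{\mathrm{co}}$: it carries a lax transformation $F \Rightarrow G$ in $Y$ to an oplax transformation in $Y^{\mathrm{co}}$, and it identifies the normal oplax nerve of $X^{\mathrm{co}}$ with the normal lax nerve of $X$, whose realization is canonically equivalent to $|NX|$ by the comparison of nerves in \cite{CCG10Nerves}; the oplax case then supplies the lax case. Apart from this bookkeeping, the only substantial labor is the verification of the oplax-functor axioms for $\bar\alpha$, which is routine but the most calculation-heavy step.
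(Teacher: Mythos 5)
The paper offers no internal proof of this statement: it is imported directly from \cite{CCG10Nerves}, so the only comparison available is with the standard argument behind the cited result --- and your cylinder construction is exactly that argument, correctly executed. Your explicit verification in the oplax case is right: the only nonidentity constraints of $\bar\alpha$ are the whiskerings $\alpha_g * Ff$ over composites of type $0\to 1\to 1$, normality holds because $F$ and $G$ are strict, the coherence for triples of type $0\to 1\to 1\to 1$ is precisely the composition axiom $\alpha_{hg}=(Gh*\alpha_g)\circ(\alpha_h*Fg)$, and the nerve of \cref{defn:nopl-nerve} is indeed functorial under postcomposition with normal oplax functors and preserves the product $X\times[1]$, with $N[1]=\Delta^1$. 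Two refinements are worth recording. First, ``exactly the data'' overstates the cylinder correspondence: a general normal oplax functor $X\times[1]\to Y$ restricting to $F$ and $G$ on the two ends carries more freedom than an oplax transformation (its values over $0\le 1$ need not have the form $\alpha_y\circ Ff$); since you only use and verify the one direction you need, nothing breaks, but the biconditional should be dropped or qualified. Second, your duality detour for the lax case is heavier than necessary: a lax transformation $\alpha$, with components $\alpha_f\cn Gf\circ\alpha_x\Rightarrow \alpha_y\circ Ff$, also defines a \emph{normal oplax} cylinder functor directly, by sending a $1$-cell $f$ over $0\le 1$ to $Gf\circ\alpha_x$ instead of $\alpha_y\circ Ff$; then the mixed constraint over $0\to 0\to 1$ is $Gg*\alpha_f$, the one over $0\to 1\to 1$ is the identity, and coherence for triples reduces to the lax composition axiom by the same computation you did in the oplax case. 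This avoids invoking the natural comparison $B(X^{\mathrm{co}})\simeq BX$ from \cite{CCG10Nerves} altogether. Note also that your co-duality route, as written, literally produces a homotopy between $|N(F^{\mathrm{co}})|$ and $|N(G^{\mathrm{co}})|$, and one concludes $|NF|\simeq|NG|$ only after transporting along that natural equivalence and cancelling it --- harmless for every application of \cref{thm:trans-htpy} in this paper, but the direct construction realizes the transformation as a homotopy in the theorem's literal sense.
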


\subsection{Opfibrations}\label{sec:opfibrations}
In this section we give the definition of an opfibration between 2-categories. Our
definition is essentially the same as the notion of fibration between
bicategories developed by Buckley \cite{Buc14Fibred}, but in the
opfibrational form and in the special case that the bicategories and
pseudofunctors involved are in fact 2-categories and
2-functors. Making those changes yields the following definitions.

\begin{defn}\label{defn:cart-2}
 Let $P\cn K \to L$ be a 2-functor. For 1-cells $g,h\cn x \to y$ in
 $K$, we say that a 2-cell $\ga\cn g \TO h$ is \emph{cartesian} with
 respect to $P$ if it is a cartesian morphism in the category
 $K(x,y)$, i.e., for all 1-cells $k\cn x\to y$ in $K$, the following
 square is a pullback.
 \[
 \begin{tikzpicture}[x=40mm,y=20mm]
   \draw[tikzob,mm] 
   (0,0) node (a) { K(x,y)(k,g)}
   (1,0) node (b) {L(Px, Py)(Pk,Pg)}
   (0,-1) node (d)  { K(x,y)(k,h)}
   (1,-1) node (c) {L(Px,Py)(Pk,Ph)};
   \path[tikzar,mm] 
   (a) edge[swap] node {\ga_{*}} (d)
   (d) edge node[swap] {P} (c)
   (a) edge node {P} (b)
   (b) edge node {P\ga_{*}} (c);
 \end{tikzpicture}
 \]
 Explicitly, this means that for each $\psi\cn
 k\TO h$ and $\phi\cn Pk \TO Pg$ such that $P\psi = (P\ga) \circ
 \phi$, there is a unique 2-cell $\phi^c\cn k \TO g$ such that $P\phi^c =
 \phi$ and $\ga \circ \phi^c = \psi$.  
\end{defn}
\begin{defn}\label{defn:loc-fib}
  We say that a 2-functor $P\cn K \to L$ is a \emph{local fibration}
  if the induced functor on hom-categories
  \[
  P\cn K(x,y) \to L(Px,Py)
  \]
  is a fibration for each $x,y \in K$.  That is, each 2-cell $\al\cn f
  \Rightarrow P(g)$ in $L(Px,Py)$ has a cartesian lift $\wh\al\cn
  \wh{g} \Rightarrow g$ in $K(x,y)$, with $P \wh\al=\al$.
\end{defn}

\begin{defn}\label{defn:opcart}
  We say that a 1-cell $h\cn a \to d$ in $K$ is \emph{opcartesian}
  with respect to $P$ if the following two conditions hold.
  \begin{enumerate}
  \item\label{it:opcart-1} Suppose given a 1-cell $u \cn a \to b$ in $
    K$, a 1-cell $t \cn Pd \to Pb$, and an 2-cell isomorphism $\al$ as
    shown below.
    \[
    \begin{tikzpicture}[x=20mm,y=20mm]
      \draw[tikzob,mm] 
      (0,0) node (a) {Pa}
      (0,-1) node (d)  {Pd}
      (1,-1) node (c) {Pb};
      \path[tikzar,mm] 
      (a) edge[swap] node {Ph} (d)
      (a) edge node {Pu} (c)
      (d) edge[swap] node {t} (c);
      \draw[tikzob,mm]
      (.35,-.7) node[rotate=45,2label={above,\al}] {\Rightarrow}
      ;
    \end{tikzpicture}
    \]
    Then there exists a 1-cell $\tilde{t} \cn d \to b$ in $ K$ and 2-cell isomorphisms 
    \[
    \al_1 \cn t \cong P\tilde{t}, \quad \al_2 \cn \tilde{t} h \cong u\]
    such that the following equality holds.
    \[
    \begin{tikzpicture}[x=30mm,y=20mm]
      \draw[tikzob,mm] 
      (0,0) node (a) {Pa}
      (0,-1) node (d)  {Pd}
      (1,-1) node (c) {Pb}
      (.35,-.7) node[rotate=45,2label={above,\al}] {\Rightarrow}
      ;
      \path[tikzar,mm] 
      (a) edge[swap] node {Ph} (d)
      (a) edge[bend left=30] node {Pu} (c)
      (d) edge[swap, bend right=30] node {t} (c);
      \draw (1.4,-.5) node {=};
      \begin{scope}[shift={(2,0)}]
        \draw[tikzob,mm] 
        (0,0) node (a) {Pa}
        (0,-1) node (d)  {Pd}
        (1,-1) node (c) {Pb}
        (.5,-1) node[rotate=90,2label={below,\al_1}] {\Rightarrow}
        (.38,-.4) node[rotate=45,2label={above,P \al_2}] {\Rightarrow}
        ;
        \path[tikzar,mm] 
        (a) edge[swap] node {Ph} (d)
        (a) edge[bend left=30] node {Pu} (c)
        (d) edge[bend left=30] node {P\tilde{t}} (c)
        (d) edge[swap, bend right=30] node {t} (c);
      \end{scope}
    \end{tikzpicture}
    \]
    We say that $(\tilde{t}, \al_1, \al_2)$ is a \emph{lift} of $(u, t, \al)$.

  \item\label{it:opcart-2} Suppose given the following:
    \begin{itemize}
    \item a 2-cell $\beta \cn t \Rightarrow t'$;
    \item a pair of 1-cells $u, u' \cn a \to b$ in $ K$ with a 2-cell
      $\rho \cn u \Rightarrow u'$ between them; and
    \item 2-cell isomorphisms $\al \cn t \circ Ph \cong Pu$ and $\al'
      \cn t' \circ Ph \cong Pu'$, and lifts $(\tilde{t}, \al_1,
      \al_2)$, $(\tilde{t'}, \al_1', \al_2')$, of $(u, t,
      \al)$ and $(u', t', \al')$ respectively,
      such that the following
      equality holds.
      \begin{equation}\label{eq:opcart-diag0}
        \begin{tikzpicture}[x=30mm,y=20mm,vcenter]
          \draw[tikzob,mm] 
          (0,0) node (a) {Pa}
          (0,-1) node (d)  {Pd}
          (1,-1) node (c) {Pb}
          (.2,-.82) node[rotate=45,2label={above,\al}] {\Rightarrow} 
          (.55,-.4) node[rotate=45,2label={above,P\rho}] {\Rightarrow};
          \path[tikzar,mm] 
          (a) edge[swap] node {Ph} (d)
          (a) edge[bend left=40] node {Pu'} (c)
          (a) edge[bend right=30] node {Pu} (c)
          (d) edge[swap, bend right=30] node {t} (c);
          \draw (1.4,-.5) node {=};
          \begin{scope}[shift={(2,0)}]
            \draw[tikzob,mm] 
            (0,0) node (a) {Pa}
            (0,-1) node (d)  {Pd}
            (1,-1) node (c) {Pb}
            (.5,-1) node[rotate=90,2label={below,\be}] {\Rightarrow}
            (.38,-.4) node[rotate=45,2label={above,\al'}] {\Rightarrow};
            \path[tikzar,mm] 
            (a) edge[swap] node {Ph} (d)
            (a) edge[bend left=40] node {Pu'} (c)
            (d) edge[bend left=30] node {t'} (c)
            (d) edge[swap, bend right=30] node {t} (c);
          \end{scope}
        \end{tikzpicture}
      \end{equation}
    \end{itemize}
    \noindent Then there exists a unique 2-cell $\tilde{\be} \cn
    \tilde{t} \Rightarrow \tilde{t}'$ such that the following two
    equalities hold
    \begin{equation}\label{eq:opcart-diag1}
      \begin{tikzpicture}[x=30mm,y=20mm,vcenter]
        \draw[tikzob,mm] 
        (0,0) node (a) {Pd}
        (1,0) node (b)  {Pb}
        (2,0) node (c) {Pd}
        (3,0) node (d) {Pb}
        (.5,.3) node[rotate=-90,2label={above,\al_1}] {\Rightarrow} 
        (.5,-.38) node[rotate=-90,2label={above,P\wt{\be}}] {\Rightarrow} 
        (2.5,.3) node[rotate=-90,2label={above,\be}] {\Rightarrow} 
        (2.5,-.38) node[rotate=-90,2label={above,\al_1'}] {\Rightarrow};
        \path[tikzar,mm] 
        (a) edge[bend left=90] node {t} (b)
        (a) edge[swap] node {P\wt{t}} (b)
        (a) edge[swap, bend right=90] node {P\wt{t}'} (b)
        (c) edge[bend left=90] node {t} (d)
        (c) edge[swap] node {t'} (d)
        (c) edge[swap, bend right=90] node {P\wt{t}'} (d);
        \draw (1.5,0) node {=};
      \end{tikzpicture}
    \end{equation}
    \begin{equation}\label{eq:opcart-diag2}
      \begin{tikzpicture}[x=30mm,y=20mm,vcenter]
        \draw[tikzob,mm] 
        (0,0) node (a) {a}
        (0,-1) node (d)  {d}
        (1,-1) node (c) {b}
        (.2,-.82) node[rotate=45,2label={above,\al_2}] {\Rightarrow} 
        (.55,-.4) node[rotate=45,2label={above,\rho}] {\Rightarrow};
        \path[tikzar,mm] 
        (a) edge[swap] node {h} (d)
        (a) edge[bend left=40] node {u'} (c)
        (a) edge[bend right=30] node {u} (c)
        (d) edge[swap, bend right=30] node {\wt{t}} (c);
        \draw (1.4,-.5) node {=};
        \begin{scope}[shift={(2,0)}]
          \draw[tikzob,mm] 
          (0,0) node (a) {a}
          (0,-1) node (d)  {d}
          (1,-1) node (c) {b}
          (.5,-1) node[rotate=90,2label={below,\wt{\be}}] {\Rightarrow}
          (.38,-.4) node[rotate=45,2label={above,\al_2'}] {\Rightarrow};
          \path[tikzar,mm] 
          (a) edge[swap] node {h} (d)
          (a) edge[bend left=40] node {u'} (c)
          (d) edge[bend left=30] node {\wt{t}'} (c)
          (d) edge[swap, bend right=30] node {\wt{t}} (c);
        \end{scope}
      \end{tikzpicture}
    \end{equation}
  \end{enumerate}
\end{defn}

We give a characterization of opcartesian 1-cells via bipullbacks.

\begin{lem}[{\cite[Proposition  3.1.2]{Buc14Fibred}}]\label{lem:opfib_iff_bp}
  A 1-cell $h \cn a \to d$ in $ K$ is opcartesian with respect to $P$
  if and only if the commutative square
  \[
  \begin{tikzpicture}[x=30mm,y=20mm]
    \draw[tikzob,mm] 
    (0,0) node (a) { K(d,b)}
    (1,0) node (b) {L(Pd, Pb)}
    (0,-1) node (d)  { K(a,b)}
    (1,-1) node (c) {L(Pa,Pb)};
    \path[tikzar,mm] 
    (a) edge[swap] node {h^{*}} (d)
    (d) edge node[swap] {P} (c)
    (a) edge node {P} (b)
    (b) edge node {Ph^{*}} (c);
  \end{tikzpicture}
  \]
  is a bipullback in $\cat$ for all objects $b \in  K$.
\end{lem}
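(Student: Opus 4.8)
The plan is to unwind the explicit form of a bipullback in $\cat$ and then match the two clauses of \cref{defn:opcart} against the two halves of the assertion that a functor is an equivalence of categories. Recall that the bipullback (pseudo-pullback) of the cospan
\[
K(a,b) \xrightarrow{\ P\ } L(Pa,Pb) \xleftarrow{\ Ph^{*}\ } L(Pd,Pb)
\]
is the iso-comma category $\mathcal{Q}$ whose objects are triples $(u,t,\al)$, with $u\cn a\to b$ a 1-cell of $K$, with $t\cn Pd\to Pb$ a 1-cell of $L$, and with $\al\cn t\circ Ph\cong Pu$ an invertible 2-cell; a morphism $(u,t,\al)\to(u',t',\al')$ is a pair $(\rho,\be)$ of 2-cells $\rho\cn u\TO u'$ and $\be\cn t\TO t'$ for which the compatibility square \eqref{eq:opcart-diag0} commutes. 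Since $P$ is a strict 2-functor the given square commutes on the nose and $P(\wt t h)=P\wt t\circ Ph$, so there is a comparison functor
\[
\Phi\cn K(d,b)\to \mathcal{Q},\qquad \Phi(\wt t)=(\wt t h,\ P\wt t,\ 1),\qquad \Phi(\wt\be)=(\wt\be * h,\ P\wt\be).
\]
By the standard characterization of bicategorical limits, the square is a bipullback (for a fixed $b$) if and only if $\Phi$ is an equivalence of categories, that is, essentially surjective and fully faithful; the quantification over all objects $b$ on the bipullback side matches the quantifications over the 1-cells $u\cn a\to b$ and $t\cn Pd\to Pb$ implicit in \cref{defn:opcart}. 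It therefore suffices to identify essential surjectivity with condition~\ref{it:opcart-1} and full faithfulness with condition~\ref{it:opcart-2}.

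First I would treat essential surjectivity. An object $(u,t,\al)$ of $\mathcal{Q}$ lies in the essential image of $\Phi$ precisely when there exist $\wt t\in K(d,b)$ and an isomorphism $\Phi(\wt t)\cong(u,t,\al)$ in $\mathcal{Q}$. Such an isomorphism is a pair of invertible 2-cells $\al_2\cn \wt t h\cong u$ and $\al_1^{\inv}\cn P\wt t\cong t$ satisfying the morphism compatibility \eqref{eq:opcart-diag0}; rewriting this compatibility in terms of $\al_1\cn t\cong P\wt t$ yields exactly the pasting equality demanded of a lift $(\wt t,\al_1,\al_2)$ in condition~\ref{it:opcart-1}. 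Hence essential surjectivity of $\Phi$ for all $b$ is equivalent to condition~\ref{it:opcart-1}.

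Next I would treat full faithfulness. Fixing lifts $(\wt t,\al_1,\al_2)$ and $(\wt t',\al_1',\al_2')$ of $(u,t,\al)$ and $(u',t',\al')$ identifies these objects with $\Phi(\wt t)$ and $\Phi(\wt t')$ in $\mathcal{Q}$, by the previous paragraph. Under this identification a morphism $(\rho,\be)\cn(u,t,\al)\to(u',t',\al')$ of $\mathcal{Q}$ transports to a morphism $\Phi(\wt t)\to\Phi(\wt t')$, and $\Phi$ is full and faithful exactly when there is a unique $\wt\be\cn \wt t\TO\wt t'$ with $\Phi(\wt\be)=(\wt\be * h,\ P\wt\be)$ equal to this transported pair. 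Transporting through the coherence isomorphisms turns the requirement that $P\wt\be$ agree with $\be$ (conjugated by $\al_1,\al_1'$) and the requirement that $\wt\be * h$ agree with $\rho$ (conjugated by $\al_2,\al_2'$) into the pasting equalities \eqref{eq:opcart-diag1} and \eqref{eq:opcart-diag2} respectively. Thus the existence-and-uniqueness clause of condition~\ref{it:opcart-2} is precisely full faithfulness of $\Phi$, and combining the two cases proves the lemma.

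The only genuine work is the bookkeeping: one must track the orientations of the structure 2-cells $\al,\al_1,\al_2,\be,\rho$ and check that the pasting equalities \eqref{eq:opcart-diag0}--\eqref{eq:opcart-diag2} coincide with the compatibility squares defining morphisms of $\mathcal{Q}$ and with the action of $\Phi$ on 2-cells. I expect this diagram chase to be the main obstacle, along with pinning down the explicit description of the iso-comma category and the criterion that a strictly commuting square is a bipullback if and only if its comparison functor is an equivalence. As all of this is standard for the pseudo-pullback in $\cat$, the lemma follows; indeed it is the opfibrational translation of \cite[Prop.~3.1.2]{Buc14Fibred}.
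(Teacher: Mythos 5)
The paper does not prove this lemma at all: it is stated as a direct citation of \cite[Prop.\ 3.1.2]{Buc14Fibred}, so there is no in-paper argument to compare against. Your proposal is correct and is the expected argument: modelling the bipullback by the iso-comma category $\mathcal{Q}$, noting that strictness of $P$ makes the comparison functor $\Phi(\wt t)=(\wt t h, P\wt t, 1)$ well defined, and matching essential surjectivity with condition (1) (via the observation that a lift $(\wt t,\al_1,\al_2)$ is exactly an isomorphism $(\al_2,\al_1^{\inv})\cn\Phi(\wt t)\to(u,t,\al)$ in $\mathcal{Q}$) and full faithfulness with condition (2) (taking the trivial lifts $\al_1=\al_2=1$ for one direction and transporting along the lift isomorphisms for the other). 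The orientation bookkeeping you flag does come out right, so the argument is complete modulo that routine chase.
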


\cref{defn:cart-2,defn:loc-fib,defn:opcart} combine to give the following
definition of opfibration for 2-functors.

\begin{defn}\label{defn:opfib}
  A 2-functor $P\cn K \to L$ is an
  \emph{opfibration} if the following conditions hold:
  \begin{enumerate}
  \item 1-cells $f\cn P(x) \to z$ have opcartesian lifts $\wh{f}\cn x
    \to \wh{x}$, with $P\wh{f}=f$,
  \item $P$ is a local fibration, and
  \item horizontal composites of cartesian 2-cells are cartesian.
  \end{enumerate}
\end{defn}

\begin{rmk}\label{rmk:variances}
The previous definition mixes the variances of the lifting properties: 1-cells have opcartesian lifts, while 2-cells have cartesian ones. Any other mix
of variances can be obtained by applying either $\op$ (reversing the direction of 1-cells) or $\mathrm{co}$ (reversing the direction of 2-cells) to both the source and target. We have chosen this set of variances with the particular application of \cref{sec:application} in mind.
\end{rmk}

\subsection{Lax and oplax comma objects}\label{sec:comma-objs}

In this section we define and study two comma object constructions
that we call, respectively, the lax comma object $\laco{F,G}$ (see
\cref{defn:laco}) and the oplax comma object $\oplaco{F,G}$ (see
\cref{defn:oplaco}). In each case, the input is a pair of 2-functors
$F, G$ and the output is a 2-category together with a universal lax,
respectively oplax, transformation.  These are special cases of a more
general theory of lax limits developed by Gray
\cite{Gray80Closed,Gray80Existence}.

\begin{defn}\label{defn:laco}
  Given a cospan of 2-functors 
  \[
  \begin{tikzpicture}[x=12mm,y=12mm]
    \draw[tikzob,mm] 
    (0,0) node (e) {Z}
    (1,0) node (c) {Y}
    (1,1) node (d) {X}
    ;
    \path[tikzar,mm] 
    (e) edge[swap] node {G} (c)
    (d) edge node {F} (c)
    ;
  \end{tikzpicture}
  \]
  we define the \emph{lax comma object} $\laco{F,G}$ as the following
  2-category.
  \begin{itemize}
  \item An object of $\laco{F,G}$ consists of a triple $[x,f,z]$ where
    $x$ is an object of $X$, $z$ is an object of $Z$, and $f\cn F(x)
    \to G(z)$ is a 1-cell in $Y$.
    
  \item A 1-cell from $[x,f,z]$ to $[x',f',z']$ is given by a triple
    $[s,\al,t]$ where $s\cn x \to x'$ is a 1-cell in $X$, $t\cn z \to
    z'$ is a 1-cell in $Z$, and $\al$ shown below is a 2-cell in $Y$.
    \[
    \begin{tikzpicture}[x=18mm,y=18mm]
      \draw[tikzob,mm] 
      (0,0) node (Fx) {Fx}
      (1,0) node (Gz) {Gz}
      (0,-1) node (Fx') {Fx'}
      (1,-1) node (Gz') {Gz'}
      (.5,-.45) node[rotate=225,2label={above,}] {\Rightarrow}
      node[below right] {\al}
      ;
      \path[tikzar,mm] 
      (Fx) edge node {f} (Gz)
      (Fx') edge[swap] node {f'} (Gz')
      (Fx) edge[swap] node {Fs} (Fx')
      (Gz) edge node {Gt} (Gz')
      ;
    \end{tikzpicture}
    \]

  \item A 2-cell from $[s,\al,t]$ to $[s',\al',t']$ is 
  a pair
    $[\phi,\ga]$ where $\phi\cn s \Rightarrow s'$ is a 2-cell in $X$
    and $\ga\cn t \Rightarrow t'$ is a 2-cell in $Z$, subject to the
    following equality of pastings below.
    \begin{equation}\label{eq:laco-2}
    \begin{tikzpicture}[x=25mm,y=20mm,vcenter]
      \newcommand{\boundary}{
      \draw[tikzob,mm] 
      (0,0) node (Fx) {Fx}
      (1,0) node (Gz) {Gz}
      (0,-1) node (Fx') {Fx'}
      (1,-1) node (Gz') {Gz'}
      ;
      \path[tikzar,mm] 
      (Fx) edge node {f} (Gz)
      (Fx') edge[swap] node {f'} (Gz')
      (Fx) edge[bend right=40, swap] node {Fs'} (Fx')
      (Gz) edge[bend left=40] node {Gt} (Gz')
      ;
      }
      \boundary
      \draw[tikzob,mm] 
      (.70,-.45) node[rotate=225,2label={above,\al}] {\Rightarrow}
      (Fx) ++(0,-.55) node[rotate=180, 2label={below,F\phi}] {\Rightarrow}
      ;
      \path[tikzar,mm] 
      (Fx) edge[bend left=40] node {Fs} (Fx')
      ;

      \draw (1.75,-.5) node {=};

      \begin{scope}[shift={(2.5,0)}]
        \boundary
        \draw[tikzob,mm] 
        (.2,-.45) node[rotate=225,2label={above,\al'}] {\Rightarrow}
        (Gz) ++(0,-.55) node[rotate=180, 2label={below,G\ga}] {\Rightarrow}
        ;
        \path[tikzar,mm] 
        (Gz) edge[bend right=40,swap] node {Gt'} (Gz')
        ;
      \end{scope}
    \end{tikzpicture}
    \end{equation}
  \end{itemize}
  Composition of 1-cells is given by composing the 1-cell components
  and vertically pasting the 2-cell components.  Composition of
  2-cells is given componentwise.
\end{defn}

\begin{rmk}
When $X$, $Y$ and $Z$ are considered as strict $\infty$-categories, the lax comma $\infty$-category of \cite[\S 6]{AM20QuillenAII} is in fact a 2-category, and coincides with the lax comma object just defined. Several of the results presented below can be seen as special cases of results in \cite{AM20QuillenAII}.
\end{rmk}

\begin{prop}\label{prop:laco-univ}
  The lax comma object $\laco{F,G}$ is equipped with projection 2-functors
  $p_X$ and $p_Z$ together with a lax natural transformation $\pi$ as
  shown below.
  \[
  \begin{tikzpicture}[x=20mm,y=16mm]
    \draw[tikzob,mm] 
    (0,0) node (z) {Z}
    (1,1) node (x) {X}
    (1,0) node (y) {Y}
    (0,1) node (laco) {\laco{F,G}}
    (.5,.55) node[rotate=225,2label={above,}] {\Rightarrow}
    node[below right] {\pi}
    ;
    \path[tikzar,mm] 
    (z) edge[swap] node {G} (y)
    (x) edge node {F} (y)
    (laco) edge node {p_X} (x)
    (laco) edge[swap] node {p_Z} (z)
    ;
  \end{tikzpicture}
  \]
\noindent These data are universal in the following sense.  Let $K$ be
a 2-category, $R$ and $Q$ be 2-functors, and $\la$ be a lax natural
transformation as shown below.
  \[
  \begin{tikzpicture}[x=20mm,y=16mm]
    \draw[tikzob,mm] 
    (0,0) node (z) {Z}
    (1,1) node (x) {X}
    (1,0) node (y) {Y}
    (0,1) node (laco) {K}
    (.5,.55) node[rotate=225,2label={above,}] {\Rightarrow}
    node[below right] {\la}
    ;
    \path[tikzar,mm] 
    (z) edge[swap] node {G} (y)
    (x) edge node {F} (y)
    (laco) edge node {R} (x)
    (laco) edge[swap] node {Q} (z)
    ;
  \end{tikzpicture}
  \]
  Then there exists a unique 2-functor $h\cn K \to \laco{F,G}$ such
  that $R=p_X\circ h$, $Q=p_Z\circ h$, and the following equality of
  pasting diagrams holds.
  \[
  \begin{tikzpicture}[x=20mm,y=20mm,baseline={(0,.5).base}]
    \newcommand{\boundary}{
    \draw[tikzob,mm] 
    (-.7,.7) node (k) {K}
    (0,-1) node (z) {Z}
    (1,0) node (x) {X}
    (1,-1) node (y) {Y}
    ;
    \draw[tikzar,mm] 
    (z) edge[swap] node (g) {G} (y)
    (x) edge node (f) {F} (y)
    (k) edge[swap, bend right] node (q) {Q} (z)
    (k) edge[bend left] node (r) {R} (x)
    ;
    }
    \begin{scope}
    \boundary
    \draw[tikzob,mm,2label={above,}] (y) ++(45:1.5) node {=};
    \draw[2cell] 
    node[between=k and y at .5, rotate=225,2label={above,\la}] {\Rightarrow}
    ;
    \end{scope}
    \begin{scope}[shift={(3.8,0)}]
      \boundary
    \draw[tikzob,mm] 
    (0,0) node (laco) {\laco{F, G}}
    ;
    \draw[tikzar,mm] 
    (laco) edge node {p_X} (x)
    (laco) edge[swap] node {p_Z} (z)
    (k) edge[dashed] node[pos=.6] {\exists !\,h} (laco)
    ;
    \draw[2cell]
    node[between=laco and y at .5, rotate=225, 2label={above,\pi}]
    {\Rightarrow}
    ;
    \end{scope}
  \end{tikzpicture}
   \]
\end{prop}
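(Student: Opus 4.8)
The plan is to read the 2-functor $h$ directly off the universal transformation $\la$, exploiting that the three levels of structure of a 2-functor into $\laco{F,G}$ correspond, respectively, to the components, the 2-cell data, and the compatibility law of a lax transformation. First I would record explicit formulas for the claimed equipment: $p_X$ and $p_Z$ are the evident projections, sending an object $[x,f,z]$ to $x$ and $z$, a 1-cell $[s,\al,t]$ to $s$ and $t$, and a 2-cell $[\phi,\ga]$ to $\phi$ and $\ga$; while $\pi \cn F p_X \TO G p_Z$ has component 1-cell $\pi_{[x,f,z]} = f$ at each object and component 2-cell $\pi_{[s,\al,t]} = \al$ at each 1-cell. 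The lax transformation axioms for $\pi$---naturality in 2-cells, normalization of $\pi_1$, and the pasting law for a composite---are precisely the equation \eqref{eq:laco-2}, the description of identity 1-cells, and the composition rule of \cref{defn:laco}, so $\pi$ is a lax natural transformation essentially by construction.

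For the universal property, given $(R, Q, \la)$ as in the statement I would define $h$ on objects, 1-cells, and 2-cells by
\[
h(k) = [Rk, \la_k, Qk], \qquad h(m) = [Rm, \la_m, Qm], \qquad h(\theta) = [R\theta, Q\theta],
\]
where $k$, $m \cn k \to k'$, and $\theta$ range over the cells of $K$. Here $\la_k \cn FRk \to GQk$ is the component 1-cell of $\la$ and $\la_m \cn GQm \circ \la_k \TO \la_{k'} \circ FRm$ its component 2-cell; matching $s = Rm$, $t = Qm$, $f = \la_k$, and $f' = \la_{k'}$, these data have exactly the shape required of a 1-cell of $\laco{F,G}$.

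The heart of the verification is the dictionary between the lax naturality axioms of $\la$ and the 2-functoriality of $h$. The axiom that $\la$ is natural with respect to 2-cells $\theta \cn m \TO m'$ is exactly the pasting equality \eqref{eq:laco-2} needed for $[R\theta, Q\theta]$ to be a legitimate 2-cell of $\laco{F,G}$; the axiom $\la_{1_k} = 1$ says that $h$ preserves identity 1-cells; and the axiom expressing $\la_{m' m}$ as the pasting of $\la_m$ and $\la_{m'}$ is precisely the identity $h(m' m) = h(m') \circ h(m)$, since composition in $\laco{F,G}$ composes 1-cell components and vertically pastes 2-cell components while $R$ and $Q$ are strict. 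Preservation of identity 2-cells and of vertical and horizontal composition is then immediate, as 2-cells of $\laco{F,G}$ compose componentwise and $R$, $Q$ are 2-functors. The three required identities are read off directly: $p_X h = R$ and $p_Z h = Q$ hold on the nose, and whiskering $\pi$ by $h$ returns $\la_k$ on objects and $\la_m$ on 1-cells, giving $\pi * h = \la$.

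Finally, uniqueness is forced. Any $h'$ satisfying $p_X h' = R$, $p_Z h' = Q$, and $\pi * h' = \la$ must send $k$ to a triple whose outer entries are $Rk$ and $Qk$ and whose middle entry is $(\pi * h')_k = \la_k$, hence to $h(k)$; the same projection-plus-whiskering argument pins down $h'$ on 1-cells and 2-cells, so $h' = h$. I expect no genuine obstacle here: the only real care needed is bookkeeping the 2-cell orientations, so that the direction of $\la_m$ matches the orientation of $\al$ fixed in \cref{defn:laco}, after which every clause of the universal property is a transcription of a clause in the definition of a lax transformation.
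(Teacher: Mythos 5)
Your proof is correct and follows essentially the same route as the paper: the same explicit projections and $\pi$, the same formulas $h(k)=[Rk,\la_k,Qk]$, $h(m)=[Rm,\la_m,Qm]$, $h(\theta)=[R\theta,Q\theta]$, and the same observation that the outer coordinates are forced by $p_X,p_Z$ while the middle one is forced by compatibility with $\pi$. The paper's proof is terser (it leaves the dictionary between the lax-naturality axioms and 2-functoriality implicit), but there is no substantive difference.
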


\begin{proof}
  The 2-functor $p_X\cn \laco{F,G} \to X$ is given by projection onto the
  first coordinate; the 2-functor $p_Z\cn \laco{F,G} \to Z$ is given by
  projection to the final coordinate.  The component of $\pi$ at an
  object $[x,f,z]$ is $f$.  The laxity of $\pi$ at a 1-cell
  $[s,\al,t]$ is $\al$.

  Now we turn to the universal property.  Given $K$, $R$, $Q$, and $\la$ as in the statement, we define a
  2-functor $h \cn K \to \laco{F,G}$ as follows:
  \begin{itemize}
  \item $k \mapsto [Rk,\la_k,Qk]$;
  \item $(m\cn k \to k') \mapsto [Rm, \la_m, Qm]$; and
  \item $(\mu\cn m \Rightarrow m') \mapsto [R\mu, Q\mu]$.
  \end{itemize}
  Note that the first and last coordinate are determined by the
  compatibility with $p_X$ and $p_Z$, respectively, while
  compatibility with $\pi$ determines the middle coordinate for
  objects and 2-cells, proving uniqueness.
\end{proof}

The 2-categorical dualities $\op$ (reversing 1-cells) and $\co$
(reversing 2-cells) provide the following isomorphism of lax comma
objects.
\begin{lem}\label{lem:lax-coop}
  For any 2-functors $F, G$, there is an isomorphism
  \[
  (\laco{G, F})^\coop \iso \laco{F^\coop, G^\coop}
  \]
  induced by the universal properties.
\end{lem}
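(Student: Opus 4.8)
The plan is to prove the isomorphism through the universal property of \cref{prop:laco-univ}, by showing that both sides corepresent the same $\mathrm{Set}$-valued functor on $\iicat$ and then invoking the Yoneda lemma. Recall that \cref{prop:laco-univ} identifies $\laco{F,G}$ as the 2-category corepresenting the functor sending $K$ to the set of \emph{lax cones}: triples $(R,Q,\la)$ with $R\cn K\to X$, $Q\cn K\to Z$ and $\la\cn FR\TO GQ$ a lax transformation, this correspondence being natural in $K$ under whiskering. I will exhibit, naturally in $K$, a chain of bijections between $\iicat(K,(\laco{G,F})^\coop)$ and $\iicat(K,\laco{F^\coop,G^\coop})$, and conclude that the representing objects are isomorphic.

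First I would use that $\coop$ is an involutive automorphism of $\iicat$, giving the natural bijection $\iicat(K,(\laco{G,F})^\coop)\iso\iicat(K^\coop,\laco{G,F})$, $h\mapsto h^\coop$. Composing with the universal property of $\laco{G,F}$ (with $G\cn Z\to Y$ in the first slot and $F\cn X\to Y$ in the second) identifies the left-hand hom-set with the set of lax cones $(R,Q,\la)$ over $K^\coop$, where $R\cn K^\coop\to Z$, $Q\cn K^\coop\to X$ and $\la\cn GR\TO FQ$ is lax. On the other side, the universal property of $\laco{F^\coop,G^\coop}$ identifies $\iicat(K,\laco{F^\coop,G^\coop})$ with the set of lax cones $(R',Q',\la')$, where $R'\cn K\to X^\coop$, $Q'\cn K\to Z^\coop$ and $\la'\cn F^\coop R'\TO G^\coop Q'$ is lax. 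The bridge is the duality $\coop$ applied to the cone data: set $R'=Q^\coop$, $Q'=R^\coop$ and $\la'=\la^\coop$. Since $(FQ)^\coop=F^\coop R'$ and $(GR)^\coop=G^\coop Q'$, the transformation $\la^\coop$ has exactly the source and target demanded of a $\laco{F^\coop,G^\coop}$-cone; as $\coop$ is involutive on both 2-functors and transformations, these assignments are mutually inverse and natural in $K$. Chaining the bijections and applying Yoneda then produces the asserted isomorphism, which by construction is the one induced by the universal cones.

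The one substantive point, and the main obstacle, is the variance of $\coop$ on transformations. It is tempting, and \emph{wrong}, to declare that $\coop$ turns lax transformations into oplax ones: that would instead relate $(\laco{G,F})^\coop$ to the \emph{oplax} comma object and break the argument. What I would carefully record, verifying directly from \cref{defn:lax_trans}, is that $\op$ and $\co$ \emph{each} interchange lax and oplax transformations (the former reversing the direction of the transformation, the latter preserving it), so that their composite $\coop$ carries a lax transformation $\al\cn F\TO G$ to a \emph{lax} transformation $\al^\coop\cn G^\coop\TO F^\coop$ with direction reversed. Keeping these two separate 2-cell reversals straight is the crux; once it is pinned down, $\la^\coop$ is lax and the bijection goes through formally.

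As a concrete consistency check I would unwind the resulting isomorphism. It acts by reordering and transposing the triples, $[z,f,x]\mapsto[x,f,z]$ on objects and $[t,\be,s]\mapsto[s,\be,t]$ on 1-cells, leaving the mediating 2-cell $\be$ in $Y$ untouched (the dualities only reinterpret the direction in which $f$ and $\be$ are read). One could alternatively take this explicit assignment as the definition of the comparison 2-functor and verify 2-functoriality and invertibility by hand; but routing the argument through the universal property avoids constructing an inverse and checking it directly, and it is the cleaner path given that \cref{prop:laco-univ} is already available.
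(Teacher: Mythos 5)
Your proof is correct and follows essentially the same route as the paper: the paper's one-line argument is precisely the identification $\lax(D,E) = \lax(D^\coop,E^\coop)^\coop$ (your "crux" that $\coop$ preserves laxness while reversing the direction of transformations) applied to the defining square of $\laco{G,F}$, which is your Yoneda/corepresentability argument stated tersely. Your explicit description of the resulting 2-functor on cells is also consistent with \cref{defn:laco}.
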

\begin{proof}
  We use the identification $\lax(D,E) =
  \lax(D^\coop,E^\coop)^\coop$. The result follows by applying $\coop$
  to the square defining $\laco{G, F}$.
\end{proof}
  
\begin{defn}\label{defn:oplaco}
  Given a cospan of 2-functors
  \[
  \begin{tikzpicture}[x=12mm,y=12mm]
    \draw[tikzob,mm] 
    (0,0) node (e) {Z}
    (1,0) node (c) {Y}
    (1,1) node (d) {X}
    ;
    \path[tikzar,mm] 
    (e) edge[swap] node {G} (c)
    (d) edge node {F} (c)
    ;
  \end{tikzpicture}
  \]
  we define the \emph{oplax comma object} $\oplaco{F,G}$ as follows.

  \begin{itemize}
  \item An object of $\oplaco{F,G}$ consists of a triple $[x,f,z]$
    where $x$ is an object of $X$, $z$ is an object of $Z$, and $f\cn
    F(x) \to G(z)$ is a 1-cell in $Y$. In other words, the objects are
    the same as those of $\laco{F,G}$.
  \item A 1-cell from $[x,f,z]$ to $[x',f',z']$ is given by a triple
    $[s,\al,t]$ where $s\cn x \to x'$ is a 1-cell in $X$, $t\cn z \to
    z'$ in $Z$, and $\al \cn f' \circ Fs \Rightarrow Gt \circ f$ is a
    2-cell in $Y$. Note that the direction of the 2-cell is opposite
    to the one in $\laco{F,G}$.
  \item A 2-cell from $[s,\al,t]$ to $[s',\al',t']$ is 
  a pair
    $[\phi,\ga]$ where $\phi\cn s \Rightarrow s'$ is a 2-cell in $X$
    and $\ga\cn t \Rightarrow t'$ is a 2-cell in $Z$ subject to the an
    equality of pasting diagrams, analogous to \cref{eq:laco-2}.
  \end{itemize}
\end{defn}

We need the explicit descriptions of both the lax and oplax comma
objects, but we note that they are formally dual.
\begin{lem}\label{lem:lp-coop}
  For any 2-functors $F, G$, there is an isomorphism
  \[
  (\laco{G, F})^\op \iso \oplaco{F^\op, G^\op}
  \]
  induced by the universal properties.
\end{lem}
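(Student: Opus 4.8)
The plan is to mirror the proof of \cref{lem:lax-coop}, replacing the self-dual identification for $\coop$ by the identification that exchanges lax and oplax transformations under $\op$. Concretely, reversing the 1-cells of the target 2-category converts a lax transformation $\al \cn F \Rightarrow G$, with structure 2-cells $\al_f \cn Gf \circ \al_x \Rightarrow \al_y \circ Ff$, into an oplax transformation $G^\op \Rightarrow F^\op$: each component $\al_x$ becomes a 1-cell $G^\op x \to F^\op x$ in $E^\op$, and since $\op$ fixes 2-cells while reversing horizontal composites, each $\al_f$ now reads as an oplax coherence cell pointing in the opposite direction along the (reversed) transformation. This assembles into an isomorphism of 2-categories
\[
\lax(D, E) \iso \oplax(D^\op, E^\op)^\op,
\]
which is the identity on modifications and reverses the direction of transformations.

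First I would record that the oplax comma object enjoys a universal property dual to that of \cref{prop:laco-univ}: it carries projection 2-functors together with a universal \emph{oplax} transformation and is terminal among such data. This follows from \cref{prop:laco-univ} by the same $\op$-duality, or may be read off directly from \cref{defn:oplaco}; either way the objects, 1-cells, and 2-cells of $\oplaco{F,G}$ are exactly those \cref{defn:oplaco} prescribes. Next I would apply $\op$ to the entire defining diagram of $\laco{G,F}$. The cospan $Z \xrightarrow{G} Y \xleftarrow{F} X$ becomes $Z^\op \xrightarrow{G^\op} Y^\op \xleftarrow{F^\op} X^\op$, the projections $p_Z, p_X$ become 2-functors out of $(\laco{G,F})^\op$, and under the identification above the universal lax transformation $\pi$ becomes a universal oplax transformation. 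Thus $(\laco{G,F})^\op$ satisfies precisely the universal property characterizing $\oplaco{F^\op, G^\op}$.

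Since a strict lax, respectively oplax, limit is determined up to unique compatible isomorphism, comparing universal properties yields the asserted isomorphism, induced by the universal properties as claimed. On underlying cells this isomorphism is the evident relabeling $[z,f,x] \mapsto [x,f,z]$, sending a 1-cell $[t,\al,s]$ of $\laco{G,F}$ to the reversed 1-cell $[s,\al,t]$ of $\oplaco{F^\op,G^\op}$: the lax square $\al \cn Fs \circ f \Rightarrow f' \circ Gt$ in $Y$ becomes, after reindexing, the oplax square $\al \cn f' \circ F^\op s \Rightarrow G^\op t \circ f$ in $Y^\op$, which is exactly the 2-cell datum of \cref{defn:oplaco}. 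I verified this matching of objects and 1-cells by hand, and the 2-cell case is identical.

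The step requiring the most care is the lax/oplax identification under $\op$: one must check that reversing the 1-cells of $E$ carries the lax coherence data to valid oplax coherence data and conversely, matching the three axioms and confirming that the transformation direction is reversed while modifications are preserved. Once this bookkeeping is in place, the reduction to \cref{prop:laco-univ} and its oplax analogue is formal. One could instead write out the relabeling isomorphism explicitly from \cref{defn:laco,defn:oplaco} and check 2-functoriality by inspection; but since the content is a clean duality, routing through the universal properties as in \cref{lem:lax-coop} is both shorter and more transparent.
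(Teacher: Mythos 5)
Your proposal is correct and follows essentially the same route as the paper, which simply invokes the identification $\oplax(D,E) = \lax(D^\op,E^\op)^\op$ and applies $\op$ to the square defining $\laco{G,F}$; your argument is the same duality, spelled out in more detail together with the explicit cell-level relabeling.
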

\begin{proof}
  We use the identification $\oplax(D,E) = \lax(D^\op,E^\op)^\op$. The
  result follows by applying $\op$ to the square defining $\laco{G,
    F}$.
\end{proof}

The following result is dual to \cref{prop:laco-univ} and left to the
reader.
\begin{prop}\label{prop:oplaco-univ} 
  The oplax comma object $\oplaco{F,G}$ is equipped with projection
  2-functors $p_X$ and $p_Z$ together with an oplax natural
  transformation $\pi$ as shown below.
  \[
  \begin{tikzpicture}[x=20mm,y=16mm]
    \draw[tikzob,mm] 
    (0,0) node (z) {Z}
    (1,1) node (x) {X}
    (1,0) node (y) {Y}
    (0,1) node (laco) {\oplaco{F,G}}
    (.5,.55) node[rotate=225,2label={above,}] {\Rightarrow}
    node[below right] {\pi}
    ;
    \path[tikzar,mm] 
    (z) edge[swap] node {G} (y)
    (x) edge node {F} (y)
    (laco) edge node {p_X} (x)
    (laco) edge[swap] node {p_Z} (z)
    ;
  \end{tikzpicture}
  \]
  \noindent These data are universal in the following sense.  Let $K$
  be a 2-category, $R$ and $Q$ be 2-functors, and $\la$ be an oplax
  natural transformation as shown below.
  \[
  \begin{tikzpicture}[x=20mm,y=16mm]
    \draw[tikzob,mm] 
    (0,0) node (z) {Z}
    (1,1) node (x) {X}
    (1,0) node (y) {Y}
    (0,1) node (laco) {K}
    (.5,.55) node[rotate=225,2label={above,}] {\Rightarrow}
    node[below right] {\la}
    ;
    \path[tikzar,mm] 
    (z) edge[swap] node {G} (y)
    (x) edge node {F} (y)
    (laco) edge node {R} (x)
    (laco) edge[swap] node {Q} (z)
    ;
  \end{tikzpicture}
  \]
  Then there exists a unique 2-functor $h\cn K \to \oplaco{F,G}$ such
  that $R=p_X\circ h$, $Q=p_Z\circ h$, and the following equality of
  pasting diagrams holds.
  \[
  \begin{tikzpicture}[x=20mm,y=20mm,baseline={(0,.5).base}]
    \newcommand{\boundary}{
    \draw[tikzob,mm] 
    (-.7,.7) node (k) {K}
    (0,-1) node (z) {Z}
    (1,0) node (x) {X}
    (1,-1) node (y) {Y}
    ;
    \draw[tikzar,mm] 
    (z) edge[swap] node (g) {G} (y)
    (x) edge node (f) {F} (y)
    (k) edge[swap, bend right] node (q) {Q} (z)
    (k) edge[bend left] node (r) {R} (x)
    ;
    }
    \begin{scope}
    \boundary
    \draw[tikzob,mm,2label={above,}] (y) ++(45:1.5) node {=};
    \draw[2cell] 
    node[between=k and y at .5, rotate=225,2label={above,\la}] {\Rightarrow}
    ;
    \end{scope}
    \begin{scope}[shift={(3.8,0)}]
      \boundary
    \draw[tikzob,mm] 
    (0,0) node (laco) {\oplaco{F, G}}
    ;
    \draw[tikzar,mm] 
    (laco) edge node {p_X} (x)
    (laco) edge[swap] node {p_Z} (z)
    (k) edge[dashed] node[pos=.6] {\exists !\,h} (laco)
    ;
    \draw[2cell]
    node[between=laco and y at .5, rotate=225, 2label={above,\pi}]
    {\Rightarrow}
    ;
    \end{scope}
  \end{tikzpicture}
  \]
\end{prop}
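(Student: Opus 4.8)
The plan is to deduce the statement from its lax counterpart \cref{prop:laco-univ} by applying the 1-cell duality $\op$, exactly as the phrase ``dual to \cref{prop:laco-univ}'' suggests. Substituting $F \mapsto F^\op$ and $G \mapsto G^\op$ into \cref{lem:lp-coop} yields an isomorphism
\[
\oplaco{F, G} \iso (\laco{G^\op, F^\op})^\op,
\]
and under this isomorphism the projections $p_X, p_Z$ and the lax transformation $\pi$ furnished for $\laco{G^\op, F^\op}$ by \cref{prop:laco-univ} correspond, after applying $\op$, to projections and an oplax transformation for $\oplaco{F,G}$. Because $\op$ is an involution on $\iicat$ that interchanges lax and oplax transformations via the identification $\oplax(D,E) = \lax(D^\op, E^\op)^\op$ used in the proof of \cref{lem:lp-coop}, transporting the universal property of \cref{prop:laco-univ} across this isomorphism produces precisely the universal property asserted here.

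To make the correspondence usable I would also record the explicit data, mirroring the proof of \cref{prop:laco-univ} with every structure 2-cell reversed. Here $p_X$ and $p_Z$ are projection onto the first and last coordinates, and $\pi$ has component $f$ at an object $[x,f,z]$ with oplax structure 2-cell $\al \cn f' \circ Fs \Rightarrow Gt \circ f$ at a 1-cell $[s,\al,t]$; the point is that the direction of $\al$ fixed in \cref{defn:oplaco} is exactly the one demanded of the structure 2-cell of an \emph{oplax} transformation. Given $K, R, Q$ and an oplax $\la$, the induced 2-functor is
\[
k \mapsto [Rk, \la_k, Qk], \qquad (m \cn k \to k') \mapsto [Rm, \la_m, Qm], \qquad (\mu \cn m \Rightarrow m') \mapsto [R\mu, Q\mu].
\]
Checking that this is a 2-functor uses exactly the oplax axioms of $\la$: the type of $\la_m$ makes $[Rm, \la_m, Qm]$ a legitimate 1-cell, the 2-cell-naturality of $\la$ gives the oplax analogue of \cref{eq:laco-2} for $[R\mu, Q\mu]$, and the unit and composition axioms of $\la$ give preservation of identities and composites. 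Uniqueness is verbatim the lax argument: compatibility with $p_X$ and $p_Z$ forces the outer coordinates of $h$, while compatibility with $\pi$ forces the middle coordinate on objects and 2-cells.

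There is no genuine obstacle, which is precisely why the result is left to the reader; the only thing requiring care is directional bookkeeping. In the duality argument one must confirm that $\op$ carries the specific pasting equality in the universal property of $\laco{G^\op, F^\op}$ to the pasting equality required of $h$ here, with each 2-cell flipping orientation correctly; in the explicit argument the analogous care lies in verifying that the reversed direction of $\al$ in \cref{defn:oplaco} threads consistently through all three oplax coherence axioms. I would present the duality argument as the proof proper and relegate the explicit formulas to a sentence for readers who prefer them.
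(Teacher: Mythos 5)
Your proposal is correct and matches the paper's intent exactly: the paper states that this result ``is dual to \cref{prop:laco-univ} and left to the reader,'' and your primary argument is precisely that duality (via \cref{lem:lp-coop} and the identification $\oplax(D,E)=\lax(D^\op,E^\op)^\op$), with the explicit formulas for $p_X$, $p_Z$, $\pi$, and $h$ being the direct duals of those in the paper's proof of \cref{prop:laco-univ}.
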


We need a few key results about the (op)lax comma objects.

\begin{lem}\label{lem:lp-id} 
For any 2-functor $G\cn E \to D$, the projection
\[
p_E\cn \laco{1_D, G} \to E
\]
is a homotopy equivalence on classifying spaces.
\end{lem}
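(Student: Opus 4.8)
The plan is to realize $p_E = p_Z$ as a (lax) deformation retraction and then pass to classifying spaces via \cref{thm:trans-htpy}. Unwinding \cref{defn:laco} with $F = 1_D$, an object of $\laco{1_D,G}$ is a triple $[d,f,e]$ with $f\cn d \to Ge$ a 1-cell of $D$; a 1-cell $[d,f,e]\to[d',f',e']$ is a triple $[s,\al,t]$ with $s\cn d\to d'$ in $D$, $t\cn e\to e'$ in $E$, and $\al\cn Gt\circ f \TO f'\circ s$; and $p_E$ records the $E$-coordinate. I would first build a section $s\cn E \to \laco{1_D,G}$ sending $e\mapsto[Ge,1_{Ge},e]$, a 1-cell $g\cn e\to e'$ to $[Gg,1,g]$, and a 2-cell $\mu$ to $[G\mu,\mu]$. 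Because $G$ is a 2-functor and every structure cell involved is an identity, the compatibility condition \eqref{eq:laco-2} and functoriality reduce to identities, so $s$ is a genuine 2-functor, and clearly $p_E\circ s = 1_E$ on the nose.

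Next I would produce a lax natural transformation $\Theta\cn 1_{\laco{1_D,G}} \TO s\circ p_E$, noting that $s\circ p_E$ carries $[d,f,e]$ to $[Ge,1_{Ge},e]$. I set the component at $[d,f,e]$ to be
\[
\Theta_{[d,f,e]} = [\,f,\,1_f,\,1_e\,]\cn [d,f,e] \to [Ge,1_{Ge},e],
\]
whose 2-cell slot is forced to be the identity $1_f$. For a 1-cell $m=[s,\al,t]\cn[d,f,e]\to[d',f',e']$, the two composites $(s\circ p_E)(m)\circ\Theta_{[d,f,e]}$ and $\Theta_{[d',f',e']}\circ m$ share the $E$-coordinate $t$ and have $D$-coordinates $Gt\circ f$ and $f'\circ s$ respectively; I therefore take the laxity $\Theta_m = [\,\al,\,1_t\,]$, reusing the very 2-cell $\al$ that is packaged into $m$. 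Checking that $[\al,1_t]$ satisfies \eqref{eq:laco-2} is immediate, since the relevant $\al$-components of the two composites are identities.

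The remaining and main task is to verify the three axioms of \cref{defn:lax_trans} for $\Theta$: naturality in 2-cells, $\Theta_{1}=1$, and the composition axiom expressing $\Theta_{m'm}$ as the appropriate pasting of $\Theta_m$, $\Theta_{m'}$, and the relevant whiskerings. The first two are immediate from the definitions, and the composition axiom is the only real obstacle; it holds because composition of 1-cells in $\laco{1_D,G}$ is defined by composing 1-cell coordinates and vertically pasting the $\al$-coordinates (\cref{defn:laco}), which is exactly the pasting the laxity axiom demands. Once $\Theta$ is verified, \cref{thm:trans-htpy} gives a homotopy $|N\Theta|\cn 1 \simeq |Ns|\circ|Np_E|$ on classifying spaces; together with the strict equality $|Np_E|\circ|Ns| = 1_{|NE|}$ coming from $p_E\circ s = 1_E$, this exhibits $|Np_E|$ as a homotopy equivalence with homotopy inverse $|Ns|$, as claimed.
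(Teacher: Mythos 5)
Your proposal is correct and follows essentially the same route as the paper: the paper constructs the same section (there called $J$, obtained via the universal property of the lax comma object rather than by hand) with $p_E\circ J=1_E$, and the same lax transformation $1\Rightarrow J\circ p_E$ with components $[f,1,1]$ and laxity $[\al,1]$, then concludes via \cref{thm:trans-htpy}. The only cosmetic difference is that you verify the 2-functoriality of the section directly instead of invoking the universal property of \cref{prop:laco-univ}.
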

\begin{proof}
  The universal property of the lax comma object, applied to the commutative square
  \[
  \begin{tikzpicture}[x=20mm,y=12mm,baseline={(0,.5).base}]
    \draw[tikzob,mm] 
    (0,0) node (e) {E}
    (1,1) node (c) {D}
    (1,0) node (d) {D}
    (0,1) node (laco) {E}
    ;
    \path[tikzar,mm] 
    (e) edge[swap] node {G} (d)
    (c) edge node {1} (d)
    (laco) edge[swap] node {1} (e)
    (laco) edge node {G} (c)
    ;
  \end{tikzpicture}
  \]
  gives an inclusion $J\cn E \to \laco{1_D, G}$ such that $p_E\circ J = 1_E$. It sends an object
  $z \in E$ to $[Gz, 1, z]$ and a 1-cell $g \cn x \to x'$ to $[Gg, 1,
    g]$.  It
  is straightforward to verify that there is a lax transformation $\mu
  \cn 1 \Rightarrow J\circ p_E$ with components
  \[
  \begin{array}{rcl}
    \mu_{[x,f,z]} & = & [f,1,1],\\
    \mu_{[s,\al,t]} & = & [\al,1].
  \end{array}
  \]
  Upon taking classifying spaces, these 2-functors and transformations
  give the desired homotopy equivalence (see \cref{thm:trans-htpy}).
\end{proof}

\begin{notn}\label{hat}
  Given an object $k$ of a 2-category $K$, we let $\nh{k}$ denote the
  2-functor $* \to K$ which sends the unique object to $k$.
\end{notn}

\begin{notn}\label{Delta}
 Given a 2-functor $F\cn C \to D$, we let $\De F$ denote any of the
 diagonal 2-functors $C \to [E, D]$, resp. $C \to \lax(E, D)$,
 $C \to \oplax(E, D)$, given by $\De F(c)(e) = F(c)$ for all
 cells $c \in C$ and $e \in E$.
\end{notn}

\begin{defn}\label{oplax-init}
  Let $E$ be a 2-category, and let $\inob\in E$ be an object. Consider
  the 2-functors
  \[
  \begin{tikzpicture}[x=30mm,y=20mm]
    \draw[tikzob,mm] 
    (1,0) node (E) {E}
    (0,0) node (pt) {*}
    ;
    \path[tikzar,mm] 
    (E) edge[bend left=20] node {u} (pt)
    (pt) edge[bend left=20] node {\nh{\inob}} (E);
  \end{tikzpicture}
  \]
  where $u$ is the unique map.  We say that $\inob$ is an \emph{oplax
    initial object} if there exists an oplax natural transformation
  \[
  h \cn \nh{\inob} \circ u \Rightarrow \Id_{E}
  \]
  such that the whiskering $h*\nh{\inob}$ is the identity at
  $\nh{\inob}$.  Note that the 2-functor $u \circ \nh{\inob}$ and the
  whiskering $u*h$ are necessarily identities.  We say that $\tmob \in
  E$ is an \emph{oplax terminal object} if it is oplax initial in
  $E^\op$.
\end{defn}
  
To be explicit, the data of an oplax initial object consists of
components $h_j\cn \inob \to j$ for each object $j \in E$ together
with 2-cells $h_{f}$ for each 1-cell $f\colon i \to j$ in $E$ as in
the following square.
\[
\begin{tikzpicture}[x=15mm,y=15mm]
  \draw[tikzob,mm] 
  (0,0) node (a) {\inob}
  (1,0) node (b) {\inob}
  (0,-1) node (c) {i}
  (1,-1) node (d) {j}
  (.45,-.45) node[rotate=225, 2label={above,}] {\Rightarrow}
  node[below right] {h_{f}}
  ;
  \path[tikzar,mm] 
  (a) edge[equal] node {} (b)
  (c) edge[swap] node {f} (d)
  (a) edge[swap] node {h_i} (c)
  (b) edge node {h_j} (d)
  ;
\end{tikzpicture}
\]
These must satisfy that $h_\inob=1_\inob$ and $h_{1_\inob}=1_{1_\inob}$.

\begin{lem}\label{lem:lp-initial}
  Consider the following cospan.
  \[
  \begin{tikzpicture}[x=12mm,y=12mm]
    \draw[tikzob,mm] 
    (0,0) node (e) {E}
    (1,1) node (c) {C}
    (1,0) node (d) {D}
    ;
    \path[tikzar,mm] 
    (e) edge[swap] node {G} (d)
    (c) edge node {F} (d)
    ;
  \end{tikzpicture}
  \]
  If $E$ has an oplax initial object $\inob$, then the lax comma objects
  \[
  \begin{tikzpicture}[x=24mm,y=12mm,baseline={(0,.5).base}]
    \draw[tikzob,mm] 
    (0,0) node (e) {*}
    (1,1) node (c) {C}
    (1,0) node (d) {\oplax(E, D)}
    (0,1) node (laco) {\laco{\De F, \nh{G}}}
    (.5,.5) node[rotate=225,2label={above,}] {\Rightarrow}
    node[below right] {}
    ;
    \path[tikzar,mm] 
    (e) edge[swap] node {\nh{G}} (d)
    (c) edge node {\De F} (d)
    (laco) edge[swap] node {} (e)
    (laco) edge node {} (c)
    ;
  \end{tikzpicture}
  \qquad \mathrm{and} \qquad 
  \begin{tikzpicture}[x=24mm,y=12mm,baseline={(0,.5).base}]
    \draw[tikzob,mm] 
    (0,0) node (e) {*}
    (1,1) node (c) {C}
    (1,0) node (d) {D}
    (0,1) node (laco) {\laco{F, \nh{G(\inob)}}}
    (.5,.5) node[rotate=225,2label={above,}] {\Rightarrow}
    node[below right] {}
    ;
    \path[tikzar,mm] 
    (e) edge[swap] node {\nh{G(\inob)}} (d)
    (c) edge node {F} (d)
    (laco) edge[swap] node {} (e)
    (laco) edge node {} (c)
    ;
  \end{tikzpicture}
  \]
  have homotopy equivalent classifying spaces. 
\end{lem}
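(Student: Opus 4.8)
The plan is to produce an explicit pair of 2-functors between the two lax comma objects, one coming from evaluation at $\inob$ and one built from the oplax initial structure, and to show they are mutually inverse up to a lax natural transformation, so that \cref{thm:trans-htpy} delivers the homotopy equivalence. For the first map, note that evaluation at $\inob$ is a strict 2-functor $\mathrm{ev}_\inob \cn \oplax(E,D) \to D$ sending a 2-functor $H$ to $H(\inob)$, an oplax transformation to its $\inob$-component, and a modification to its $\inob$-component. Since $\mathrm{ev}_\inob \circ \De F = F$ and $\mathrm{ev}_\inob \circ \nh{G} = \nh{G(\inob)}$ on the nose, whiskering the universal lax transformation $\pi$ on $\laco{\De F, \nh{G}}$ with $\mathrm{ev}_\inob$ and invoking the universal property in \cref{prop:laco-univ} yields a 2-functor
\[
\Phi \cn \laco{\De F, \nh{G}} \to \laco{F, \nh{G(\inob)}}, \qquad \Phi[c, \al, *] = [c, \al_\inob, *].
\]
Here I am using that an object $[c,\al,*]$ of $\laco{\De F, \nh{G}}$ records an oplax transformation $\al \cn \De(Fc) \Rightarrow G$, that is, 1-cells $\al_j \cn Fc \to Gj$ together with structure 2-cells $\al_e$; the map $\Phi$ simply retains the $\inob$-component $\al_\inob \cn Fc \to G(\inob)$.

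Next I would construct a map back using the oplax initial data $h_j \cn \inob \to j$ and $h_e$ from \cref{oplax-init}. Given $[c, f, *]$ with $f \cn Fc \to G(\inob)$, set $\Psi[c, f, *] = [c, \al^f, *]$, where $\al^f \cn \De(Fc) \Rightarrow G$ is the oplax transformation with components $\al^f_j = G(h_j) \circ f$ and structure 2-cells $\al^f_e = G(h_e) * f$; on a 1-cell $[s, \de, 1_*]$, with $\de \cn f \Rightarrow f' \circ Fs$, take the middle datum to be the modification with $j$-component $G(h_j) * \de$, and define $\Psi$ on 2-cells in the evident componentwise manner. The oplax transformation and modification axioms for these data reduce to those of the oplax initial transformation $h$ together with the strictness of the 2-functor $G$; this is routine but is the most laborious bookkeeping in the argument. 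Because $h_\inob = 1_\inob$, we have $\al^f_\inob = G(1_\inob) \circ f = f$, and likewise on 1- and 2-cells, so $\Phi \circ \Psi = \mathrm{id}$ strictly.

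For the other composite I would exhibit a lax natural transformation $\eta \cn \mathrm{id} \Rightarrow \Psi \circ \Phi$, by analogy with the transformation $\mu$ in the proof of \cref{lem:lp-id}. At an object $[c, \al, *]$ the value $(\Psi\Phi)[c,\al,*] = [c, \al', *]$ has $\al'_j = G(h_j) \circ \al_\inob$, and since $(\De Fc)(h_j) = 1_{Fc}$ the structure 2-cell of $\al$ at $h_j$ is precisely a 2-cell $\al_{h_j} \cn \al_j \Rightarrow G(h_j) \circ \al_\inob = \al'_j$. I would check that the family $\{\al_{h_j}\}_j$ assembles into a modification $\al \Rrightarrow \al'$, so that $\eta_{[c,\al,*]} = [1_c, \{\al_{h_j}\}, 1_*]$ is a genuine 1-cell of $\laco{\De F, \nh{G}}$, and then that these components are lax natural with respect to 1-cells of $\laco{\De F, \nh{G}}$.

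The hard part will be this final laxity check: it unwinds to a pasting identity in $D$ that must be deduced from the oplax naturality axioms for the various $\al$'s, the compatibility axioms satisfied by the oplax initial 2-cells $h_e$, and the normalization $h * \nh{\inob} = \mathrm{id}$. Once $\eta$ is established, \cref{thm:trans-htpy} gives $|\Psi \Phi| \simeq \mathrm{id}$, while the identity $\Phi \Psi = \mathrm{id}$ gives $|\Phi \Psi| = \mathrm{id}$; hence $|\Phi|$ and $|\Psi|$ are mutually inverse homotopy equivalences, and the two classifying spaces are homotopy equivalent as claimed.
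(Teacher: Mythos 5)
Your proposal is correct and follows essentially the same route as the paper: the paper's 2-functors $e$ and $d$ are exactly your $\Phi$ (evaluation at $\inob$) and $\Psi$ (pushforward along $Gh$), the strict identity $\Phi\Psi = \mathrm{id}$ uses $h_\inob = 1_\inob$ just as in the paper, and the transformation $\mathrm{id} \Rightarrow \Psi\Phi$ is built from the oplax structure 2-cells $\al_{h_j}$ exactly as the paper's modification $\ga_{c,f}$, with \cref{thm:trans-htpy} finishing the argument. The only cosmetic difference is that the paper asserts the transformation is 2-natural while you more cautiously claim only lax naturality, which suffices for the conclusion.
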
 
\begin{proof}
  By the universal property of \cref{prop:laco-univ} we have unique
  2-functors $d$ and $e$ indicated below.
  \[
  \begin{tikzpicture}[x=18mm,y=18mm,rotate=-45,baseline={(0,.5).base}]
    \newcommand{\boundary}{
    \draw[tikzob,mm] 
    (0,0) node (laco) {\laco{F, \nh{G(\inob)}}}
    (1,-1) node (pt) {*}
    (1,1) node (c) {C}
    (3,-0) node (de) {\oplax(E,D)}
    ;
    \draw[tikzar,mm] 
    (laco) edge[swap] node {} (pt)
    (laco) edge node {} (c)
    (pt) edge[swap, bend right] node (Ghat) {\nh{G}} (de)
    (c) edge[bend left] node (DF) {\De F} (de)
    ;
    }
    \begin{scope}
    \boundary
    \draw[tikzob,mm]
    (2,0) node (d) {D}
    ;
    \draw[tikzob,mm,2label={above,}] (d) ++(45:1.5) node {=};
    \draw[tikzar,mm] 
    (d) edge['] node[pos=.3] {\De 1_D} (de)
    (pt) edge node {\nh{G(\inob)}} (d)
    (c) edge['] node {F} (d)
    ;
    \draw[2cell] 
    node[between=d and Ghat at .55, rotate=-90, 2label={below,(\star)}]
    {\Rightarrow}
    node[between=laco and d at .5, rotate=225,2label={above,}] {\Rightarrow}
    ;
    \end{scope}
    \begin{scope}[shift={(45:3.8)}]
      \boundary
    \draw[tikzob,mm] 
    (.85,0) node (laco2) {\laco{\De F, \nh{G}}}
    ;
    \draw[tikzar,mm] 
    (laco2) edge node {} (c)
    (laco2) edge node {} (pt)
    (laco) edge[dashed] node[pos=.6] {\exists !\,d} (laco2)
    ;
    \draw[2cell]
    node[between=laco2 and de at .5, rotate=-90, 2label={above,}]
    {\Rightarrow}
    ;
    \end{scope}
  \end{tikzpicture}
  \]
  \[
  \begin{tikzpicture}[x=18mm,y=18mm,rotate=-45,baseline={(0,.5).base}]
    \newcommand{\boundary}{
    \draw[tikzob,mm] 
    (0,0) node (laco) {\laco{\De F, \nh{G}}}
    (1,-1) node (pt) {*}
    (1,1) node (c) {C}
    (3,-0) node (de) {D}
    ;
    \draw[tikzar,mm] 
    (laco) edge[swap] node {} (pt)
    (laco) edge node {} (c)
    (pt) edge[swap, bend right] node (Ghat) {\nh{G(\inob)}} (de)
    (c) edge[bend left] node (DF) {F} (de)
    ;
    }
    \begin{scope}
    \boundary
    \draw[tikzob,mm]
    (2,0) node (dpt) {\oplax(E,D)}
    ;
    \draw[tikzob,mm,2label={above,}] (dpt) ++(45:1.5) node {=};
    \draw[tikzar,mm] 
    (dpt) edge['] node[pos=.3] {\ev_\inob} (de)
    (pt) edge node {\nh{G}} (dpt)
    (c) edge['] node {\De F} (dpt)
    ;
    \draw[2cell] 
    node[between=laco and dpt at .5, rotate=225,2label={above,}] {\Rightarrow}
    ;
    \end{scope}
    \begin{scope}[shift={(45:3.8)}]
      \boundary
    \draw[tikzob,mm] 
    (.85,0) node (laco2) {\laco{F, \nh{G(\inob)}}}
    ;
    \draw[tikzar,mm] 
    (laco2) edge node {} (c)
    (laco2) edge node {} (pt)
    (laco) edge[dashed] node {\exists !\, e} (laco2)
    ;
    \draw[2cell]
    node[between=laco2 and de at .5, rotate=-90, 2label={above,}]
    {\Rightarrow}
    ;
    \end{scope}
  \end{tikzpicture}
  \]
  In these diagrams, the unlabeled regions commute and we write
  $\ev_\inob$ for evaluation at the object $\inob$ of $E$.  The
  unlabeled 2-cells are the structure 2-cells of the lax comma objects
  $\laco{\De F, \nh{G}}$ and $\laco{F, \nh{G(\inob)}}$.  The 2-cell
  labeled $(\star)$ is the 2-natural transformation whose component at
  the unique object of $*$ is the oplax transformation $Gh$.

  Explicit formulas for $d$ and $e$ are given as follows.  An object
  of $\laco{F, \nh{G(\inob)}}$ is a pair $[c,f_\inob]$ where $c \in C$
  and $f_\inob$ is a 1-cell $F(c) \to G(\inob)$.  Then applying $G$ to
  $h$ we have an oplax transformation whose component at $j \in E$ is
  given by
  \[
  ((Gh)_* f_\inob)_j = G(h_j) \circ f_\inob \cn F(c) \to G(\inob) \to G(j).
  \]
  Thus we obtain
  \[
  d[c,f_\inob] = [c, (Gh)_* f_\inob]
  \]
  on objects, with similar formulas on higher cells.  On the other
  hand, an object of $\laco{\De F, \nh{G}}$ is a pair $[c,f]$ where $c
  \in C$ and $f$ is an oplax transformation $\De F(c) \to G$.  We
  obtain
  \[
  e[c,f] = [c,f_\inob].
  \]

  The reader can easily verify that $e \circ d$ is the identity
  because both $u \circ \nh{\inob}$ and $h *\nh{\inob}$ are
  identities.  For the other composite, we have
  \[
  de[c,f] = d[c,f_\inob] = [c,G(h)_* f_\inob].
  \]
  The oplaxity of $f$ with respect to the 1-cells $h_j \cn \inob \to
  j$ provides a modification
  \[\ga_{c,f} \cn G(h)_* f_\inob \Rrightarrow f,\]  
  so that the 1-cell $[1_c,\ga_{c,f}] \cn [c,G(h)_* f_\inob] \to
  [c,f]$ is the component of a 2-natural transformation $\Id
  \Rightarrow de$.  The equality $ed = \Id$ and the 2-natural
  transformation $\Id \Rightarrow de$ provide a homotopy equivalence
  on classifying spaces as desired (see \cref{thm:trans-htpy}).
\end{proof}

\begin{rmk}
  The above argument can be rephrased in terms of the universal
  property of the lax comma object, with the relatively short proof
  above replaced by a sequence of large pasting diagrams.
\end{rmk}

Combining \cref{lem:lp-id,lem:lp-initial} proves the following
corollary.
\begin{cor}\label{cor:lp-id-contract}
  If $E$ has an oplax initial object $\inob$ then $\laco{\De \Id_{D},
    \nh{G}}$ is contractible for any $G\cn E \to D$.
\end{cor}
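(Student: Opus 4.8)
The plan is to chain the two preceding lemmas after specializing the cospan appropriately, since the homotopical work is already packaged in them. First I would apply \cref{lem:lp-initial} to the cospan
\[
D \fto{\Id_D} D \xleftarrow{G} E,
\]
that is, with $C = D$ and $F = \Id_D$ (and the given $G\cn E \to D$). Because $E$ is assumed to carry an oplax initial object $\inob$, the lemma produces a homotopy equivalence on classifying spaces between $\laco{\De \Id_D, \nh{G}}$ and $\laco{\Id_D, \nh{G(\inob)}}$. This is precisely the reduction we want: it trades the comma object built over $\oplax(E,D)$ for one built over $D$ alone.

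Next I would recognize $\laco{\Id_D, \nh{G(\inob)}}$ as an instance to which \cref{lem:lp-id} applies. Here $\nh{G(\inob)}\cn * \to D$ is the 2-functor out of the terminal 2-category selecting the object $G(\inob)$, so $\laco{\Id_D, \nh{G(\inob)}} = \laco{1_D, \nh{G(\inob)}}$ is the lax comma object of the identity along a 2-functor whose source is $*$. Taking the source 2-category in \cref{lem:lp-id} to be $*$, the lemma gives that the projection
\[
p_*\cn \laco{1_D, \nh{G(\inob)}} \to *
\]
is a homotopy equivalence on classifying spaces. Since $*$ has contractible classifying space, this shows $\laco{\Id_D, \nh{G(\inob)}}$ is contractible.

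Combining the two equivalences yields that $\laco{\De \Id_D, \nh{G}}$ is contractible, as claimed. I expect essentially no obstacle here, since all of the homotopical content already lives in \cref{lem:lp-id} and \cref{lem:lp-initial}. The only steps requiring care are bookkeeping: confirming that the choices $C = D$ and $F = \Id_D$ make the first comma object appearing in \cref{lem:lp-initial} literally equal to $\laco{\De \Id_D, \nh{G}}$, and confirming that taking the source 2-category to be $*$ in \cref{lem:lp-id} is a legitimate instance so that contractibility of $*$ transports across the resulting equivalence.
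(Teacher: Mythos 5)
Your proof is correct and is exactly the argument the paper intends: the corollary is stated as following by "combining \cref{lem:lp-id,lem:lp-initial}", which is precisely your specialization $C = D$, $F = \Id_D$ in \cref{lem:lp-initial} followed by \cref{lem:lp-id} applied to $\nh{G(\inob)}\cn * \to D$. No gaps.
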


Applying \cref{lem:lp-coop} to \cref{lem:lp-initial} and
\cref{cor:lp-id-contract} yields the following dual result.
\begin{cor}\label{cor:lp-terminal}
  If $E$ has an oplax terminal object $\tmob$, then
  \[
  \oplaco{\nh{G}, \De F} \hty \oplaco{\nh{G(\tmob)}, F}.
  \]
  In particular, if $F = \Id_{D}$ then $\oplaco{\nh{G}, \De
    \Id_{D}}$ is contractible for any $G\cn E \to D$.
\end{cor}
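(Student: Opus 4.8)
The plan is to obtain \cref{cor:lp-terminal} as the formal $\op$-dual of \cref{lem:lp-initial} and \cref{cor:lp-id-contract}, exactly as the surrounding text indicates. The starting observation is that, by \cref{oplax-init}, an oplax terminal object $\tmob$ of $E$ is precisely an oplax initial object of $E^\op$. I would therefore apply \cref{lem:lp-initial} not to $E$ but to $E^\op$, together with the opposite functors $G^\op \cn E^\op \to D^\op$ and $F^\op \cn C^\op \to D^\op$. This yields a homotopy equivalence of lax comma objects
\[
\laco{\De F^\op, \nh{G^\op}} \hty \laco{F^\op, \nh{G^\op(\tmob)}},
\]
where the left-hand comma object is formed over $\oplax(E^\op, D^\op)$ and $G^\op(\tmob) = G(\tmob)$ as an object of $D^\op$.

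Next I would translate each side back across the duality of \cref{lem:lp-coop}, which gives $(\laco{A, B})^\op \iso \oplaco{B^\op, A^\op}$. Applying $\op$ to the displayed equivalence and using this identification turns the two lax comma objects into oplax comma objects: on the right, $(F^\op)^\op = F$ and $(\nh{G^\op(\tmob)})^\op = \nh{G(\tmob)}$ select the object $G(\tmob)$ of $D$, producing $\oplaco{\nh{G(\tmob)}, F}$; on the left, the diagonal and the hatted functor transform as $(\De F^\op)^\op = \De F$ and $(\nh{G^\op})^\op = \nh{G}$, producing $\oplaco{\nh{G}, \De F}$. Here one must keep track of the companion duality on functor 2-categories, namely the identity $\oplax(E^\op, D^\op)^\op \iso \lax(E, D)$ already used in the proof of \cref{lem:lp-coop}, so that the oplax comma object on the left is formed over $\lax(E, D)$, dually to the $\oplax(E,D)$ appearing in \cref{lem:lp-initial}.

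Finally, I would invoke the fact that the duality $\op$, which reverses only $1$-cells, preserves classifying spaces: the normal oplax nerve of $K^\op$ is the opposite simplicial set of $NK$, whose geometric realization is homeomorphic to $|NK|$. Consequently the homotopy equivalence above descends to the asserted equivalence $\oplaco{\nh{G}, \De F} \hty \oplaco{\nh{G(\tmob)}, F}$. The ``in particular'' clause then follows by specializing $F = \Id_D$ and dualizing \cref{cor:lp-id-contract} in the same manner: $\laco{\De \Id_{D^\op}, \nh{G^\op}}$ is contractible, and applying $\op$ together with \cref{lem:lp-coop} identifies its opposite with $\oplaco{\nh{G}, \De \Id_D}$, which is therefore contractible.

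I expect the only real obstacle to be the bookkeeping: making sure that the two dualities---lax-versus-oplax comma objects on the one hand, and lax-versus-oplax functor $2$-categories on the other---are applied in the matching combination, so that the hatted object $\nh{G}$, the diagonal $\De F$, and the evaluation object $G(\tmob)$ all land where the statement claims. None of these steps requires a genuinely new construction; the content is carried entirely by \cref{lem:lp-initial}, \cref{cor:lp-id-contract}, \cref{lem:lp-coop}, and the $\op$-invariance of classifying spaces.
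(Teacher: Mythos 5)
Your proposal is correct and follows exactly the paper's intended argument: the paper offers no separate proof of \cref{cor:lp-terminal} beyond the sentence that it is obtained by applying \cref{lem:lp-coop} to \cref{lem:lp-initial} and \cref{cor:lp-id-contract}, which is precisely the dualization you carry out. Your bookkeeping of the dualities (oplax terminal in $E$ as oplax initial in $E^\op$, the identification $\oplax(E^\op,D^\op)^\op \iso \lax(E,D)$, and the $\op$-invariance of the classifying space) is accurate and simply makes explicit what the paper leaves implicit.
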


\subsection{Comparison of pullbacks and lax comma objects}\label{sec:comparison}

This section is devoted to a comparison of the lax comma object with
the pullback of 2-categories.

\begin{notn}\label{defn:pb_notation}
  Given a cospan of 2-categories
  \[
  A \fto{F} C \fot{G} B
  \]
  we let $\pb(F,G)$ denote the pullback. For $k=0,1,2$, a $k$-cell $w$
  of the pullback consists of a pair $(w_1,w_2)$ of $k$-cells with
  $w_1 \in A, w_2 \in B$ such that $F(w_1) = G(w_2)$.
\end{notn}

\begin{rmk}
  We often refer to the pullback of 2-categories as constructed above
  as the \emph{strict 2-pullback} to emphasize that this is the
  strictest possible notion amongst all of the various types of
  pullbacks of 2-functors one could construct.
\end{rmk}

\begin{thm}\label{prop:fiber_vs_htpyfiber} 
  Let $P\cn  K \to L$ be an opfibration and $F\cn J \to L$ be
  any 2-functor.  Assume that all of the 2-cells of $L$ are invertible.  Then the inclusion
  \[
  i\cn \pb(P,F) \to \laco{P,F}
  \]
  induces a homotopy equivalence on classifying spaces.
\end{thm}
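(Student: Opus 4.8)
The plan is to construct an explicit homotopy inverse to $i$ using the opcartesian lifts guaranteed by \cref{defn:opfib}, and then to invoke \cref{thm:trans-htpy}. First I would fix a normalized cleaving: for each object $x \in K$ and each $1$-cell $f \cn Px \to z$ in $L$, choose an opcartesian lift $\hat{f}_x \cn x \to x\cdot f$ with $P(x\cdot f) = z$ and $P\hat{f}_x = f$, taking $\hat{f}_x = 1_x$ whenever $f$ is an identity. I would then define a functor $r \cn \laco{P,F} \to \pb(P,F)$ on objects by $r[x,f,j] = (x\cdot f, j)$, which is a genuine object of the strict pullback since $P(x\cdot f) = \mathrm{cod}\, f = Fj$. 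The reason for routing through opcartesian lifts, rather than attempting to retract $\laco{P,F}$ onto the image of $i$ by hand, is exactly that the codomain of an opcartesian lift lands on the nose in the strict pullback.

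The first substantive step is to define $r$ on $1$- and $2$-cells and land \emph{strictly} in $\pb(P,F)$. Given a $1$-cell $[s,\al,t]\cn [x,f,j] \to [x',f',j']$, the comma $2$-cell $\al$ is invertible since all $2$-cells of $L$ are invertible; feeding the composite $\hat{f}_{x'}\circ s$ together with the invertible $\al$ into the opcartesian universal property (condition~(1) of \cref{defn:opcart}) of $\hat{f}_x$ produces a $1$-cell $\bar{s}\cn x\cdot f \to x'\cdot f'$ equipped with an isomorphism $P\bar{s} \cong Ft$. This isomorphism is the only obstruction to landing in the strict pullback, and I would remove it using the local fibration property (\cref{defn:loc-fib}): a cartesian lift of the invertible $2$-cell $P\bar{s} \cong Ft$ lives in $K(x\cdot f, x'\cdot f')$, is itself invertible, and rectifies $\bar{s}$ to $\bar{s}'$ with $P\bar{s}' = Ft$ \emph{exactly}, so that $(\bar{s}',t)$ is an honest $1$-cell of $\pb(P,F)$. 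Defining $r$ on $2$-cells uses condition~(2) of \cref{defn:opcart} in the same manner, and the uniqueness clauses there supply the composition-comparison cells; since $P$ of both candidate composites agrees and $L$ has only invertible $2$-cells, these comparison cells are isomorphisms, so $r$ is a normal pseudofunctor (in particular a normal oplax functor, hence $Nr$ is a simplicial map of nerves in the sense of \cref{defn:nopl-nerve}). Normality of the cleaving gives $r\circ i = \Id_{\pb(P,F)}$ on the nose.

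It remains to compare $i\circ r$ with the identity on $\laco{P,F}$. For this I would assemble the opcartesian lifts into a transformation whose component at $[x,f,j]$ is the $1$-cell $[\hat{f}_x, \id, 1_j]\cn [x,f,j] \to [x\cdot f,\, \id,\, j] = i\,r[x,f,j]$, whose comma $2$-cell is the identity because $P\hat{f}_x = f$. The structure $2$-cells of this transformation at a general $1$-cell $[s,\al,t]$ are precisely the comparison isomorphisms produced in the previous paragraph, and verifying the (lax, equivalently oplax) naturality axiom is a direct application of the uniqueness half of condition~(2) of \cref{defn:opcart}. Granting this, \cref{thm:trans-htpy} realizes the transformation to a homotopy between $|N(ir)|$ and the identity map, and together with $ri = \Id$ this exhibits $|Ni|$ as a homotopy equivalence with inverse $|Nr|$.

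The main obstacle is the coherence bookkeeping of the middle two paragraphs: showing that the rectified opcartesian lifts $\bar{s}'$ compose up to canonical invertible comparison cells obeying the pseudofunctor axioms, and that the components $[\hat{f}_x,\id,1_j]$ are natural. Each such identity is forced by a uniqueness statement in \cref{defn:opcart} (with \cref{lem:opfib_iff_bp} available to recognize opcartesian $1$-cells via bipullbacks where convenient), and the standing hypothesis that all $2$-cells of $L$ are invertible is what keeps every relevant comparison $2$-cell an isomorphism throughout. I expect no conceptual difficulty beyond organizing these verifications; indeed, the invertibility hypothesis on $L$ together with the local fibration axiom are exactly what make the \emph{strict} pullback, rather than a pseudo-pullback, the correct target for $r$.
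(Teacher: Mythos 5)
Your proposal is correct and follows essentially the same route as the paper: the retraction $r$ is the paper's pseudofunctor $H$ (opcartesian lift on objects, then rectification of the resulting isomorphism $P\bar{s}\cong Ft$ via a cartesian $2$-cell from the local fibration property, with pseudofunctoriality constraints forced by the uniqueness clause of \cref{defn:opcart}), and your transformation $1\Rightarrow ir$ with components $[\hat{f}_x,\id,1_j]$ is the paper's pseudonatural $\eta$. The only cosmetic difference is that you flag explicitly that $r$ is merely a normal pseudofunctor and that its nerve still yields a simplicial map, a point the paper leaves implicit.
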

\begin{proof}
  We will define a 2-functor
  \[
  H\cn \laco{P,F} \to \pb(P,F),
  \]
  such that $Hi$ is the identity on $\pb(P,F)$, together with a
  pseudonatural transformation $\eta\cn 1 \Rightarrow iH$.  This implies
  that $i$ and $H$ induce inverse homotopy equivalences on classifying
  spaces (see \cref{thm:trans-htpy}).
  
  In order to define $H$, we note that $P$ being an opfibration
  between 2-categories means that
  \begin{enumerate}
  \renewcommand{\labelenumi}{(\theenumi)}
  \renewcommand{\theenumi}{\arabic{enumi}}
  \item\label{hfib:1} given a 1-cell $f\cn Px \to y$, we have an
    opcartesian lift $\wh{f} \cn x \to \wh{x}$; and
  \item\label{hfib:2} given a 2-cell $\si \cn u \Rightarrow Pr$, we
    have a cartesian lift $\wh{\si} \cn \wh{r} \Rightarrow r$.
  \end{enumerate}
  
  We now define $H$ as an assignment on 0-, 1-, and 2-cells of
  $\laco{P,F}$, using the notation of \cref{defn:laco}.
  \begin{itemize}
  \item For an object $[x, f, z]$, apply \eqref{hfib:1} to $f\cn Px \to
    Fz$.  This gives an opcartesian lift $\wh{f}:x \to \wh{x}$ such that
    $P\wh{f} = f$ and hence $P\wh{x} = Fz$.  Therefore $(\wh{x}, z)$ is
    an object of the pullback $\pb(P, F)$.  Note that if $f$ is an
    identity 1-cell, we take $\wh{x} = x$ and $\wh{f} = 1$.  Define
    \[
    H[x,f,z] = (\wh{x},z).
    \]
  
  \item For a 1-cell $[s, \al, t]\cn [x, f, z] \to [x', f', z']$, recall
    that $\al$ is an invertible 2-cell $Ft \circ f \cong f' \circ Ps$. Since $f = P\wh{f}$ and
    $f' = P\wh{f}'$, we have that $\al$ is a 2-cell isomorphism as
    depicted below.
    \[
    \begin{tikzpicture}[x=20mm,y=20mm]
      \draw[tikzob,mm] 
      (0,0) node (a) {Px}
      (0,-1) node (d)  {P\wh{x}}
      (1,-1) node (c) {P\wh{x}'};
      \path[tikzar,mm] 
      (a) edge[swap] node {P\wh{f}} (d)
      (a) edge node {P(\wh{f'}s)} (c)
      (d) edge[swap] node {Ft} (c);
      \draw[tikzob,mm]
      (.35,-.7) node[rotate=45,2label={above,\al}] {\Rightarrow}
      ;
    \end{tikzpicture}
    \]
    Thus, there exists a lift $(\wt{s},\ol{\al},\wt{\al})$ of
    $(\wh{f'}s,Ft,\al)$; see \cref{defn:opcart}
    \eqref{it:opcart-1}. Explicitly, we have
    \begin{itemize}
      \renewcommand{\labelitemii}{$\circ\:$}
    \item $\wt{s}\cn \wh{x} \to \wh{x}'$,
    \item $\ol{\al}\cn Ft \Rightarrow P\wt{s}$ invertible, and
    \item $\wt{\al}\cn \wt{s} \wh{f} \Rightarrow \wh{f}' s$ invertible
    \end{itemize}
    such that $\al$ is equal to the pasting of $\ol{\al}$ with
    $P\wt{\al}$.  Next we apply \eqref{hfib:2} with $\si=\ol{\al}$ to
    get a cartesian lift $\wh{\al}\cn\wh{s} \Rightarrow \wt{s}$ of
    $\ol{\al}$ such that $P\wh{\al} = \ol{\al}$ and hence $P\wh{s} =
    Ft$.  Note that $\wh{\al}$ is invertible since $\ol{\al}$ is.
    Therefore $(\wh{s},t)$ is a 1-cell $(\wh{x},z) \to (\wh{x}',z')$ in
    the pullback $\pb(F,P)$.
    
    We take specific lifts in two special cases. For an identity 1-cell
    \[
    [1, 1, 1] \cn [x, f, z] \to [x, f, z],
    \]
    we take $\wh{s}$ to be the identity 1-cell on $\wh{x}$. This can be
    done since $\wt{s}=1_{\wh{x}}$, $\ol{\al}=1_{1_{P\wh{x}}}$, and
    $\wt{\al}=1_{\wh{f}}$ is a valid lift for
    $(\wh{f},1_{P\wh{x}},1_{P\wh{f}})$, and identity 2-cells are always
    cartesian.
    
    For a 1-cell
    \[
    [s,1,t] \cn [x,1,z] \to [x',1,z'],
    \]
    we take $\wh{s}=\wt{s}=s$, with $\ol{\al}$, $\wt{\al}$ and
    $\wh{\al}$ being the appropriate identity 2-cells.  Define
    \[
    H[s, \al, t] = (\wh{s},t).
    \]
    
  \item For a 2-cell $[\phi, \ga] \cn [s, \al, t] \Rightarrow [s', \al',
    t']$, the condition given in \cref{eq:laco-2} for $[\phi,\ga]$ gives
    a version of \cref{eq:opcart-diag0}. The data required for
    \cref{defn:opcart}~(\ref{it:opcart-2}) consists of
    \begin{enumerate}
    \item the opcartesian 1-cell $h$ in the definition is $\wh{f}$;
    \item the 2-cell $\be\cn t \Rightarrow t'$ in the definition is
       $F\ga\cn Ft \Rightarrow Ft'$;
    \item the 2-cell $\rho\cn u \Rightarrow u'$ is $\wh{f}' * \phi\cn
      \wh{f}'s \Rightarrow \wh{f}'s'$;
    \item the isomorphism 2-cells $\al,\al'$ are the 2-cells of the same
      names; and
    \item the required lifts are $(\wt{s}, \ol{\al}, \wt{\al})$ and
      $(\wt{s}', \ol{\al}', \wt{\al}')$, respectively.
    \end{enumerate}
    Thus we can apply \cref{defn:opcart} to obtain a unique 2-cell of the form
    $\wt{\phi} \cn \wt{s} \Rightarrow \wt{s}'$ satisfying the
    appropriate versions of \cref{eq:opcart-diag1,eq:opcart-diag2}.
    Using that $P\wh{\al} = \ol{\al}$ and $P\wh{\al}'=\ol{\al}'$,
    together with \eqref{eq:opcart-diag1}, we have the following
    equality:
    \begin{align}\label{eq:the-equality}
      P(\wt{\phi} \circ \wh{\al})
      = P\wt{\phi} \circ P\wh{\al}
      = P\wt{\phi} \circ \ol{\al}
      = \ol{\al}' \circ F\ga
      = P\wh{\al}' \circ F\ga.
    \end{align}
    Since the 2-cell $\wh{\al}'$ is cartesian, the pullback in
    \cref{defn:cart-2} applied to the pair $(\ga,\wt{\phi})$ produces a
    unique 2-cell $\wh{\phi} \cn \wh{s} \Rightarrow \wh{s}'$ such that both
    $P\wh{\phi} = F\ga$ and $\wh{\al}' \wh{\phi} = \wt{\phi}
    \wh{\al}$. Therefore $(\wh{\phi}, \ga)$ is a 2-cell $(\wh{s}, t)
    \Rightarrow (\wh{s}', t')$ in $\pb(F,P)$.  Define
    \[
    H[\phi,\ga] = (\wh{\phi},\ga).
    \]
  \end{itemize}
  \noindent This completes the definition of $H$ on cells.  Note that
  our definition of $H$ on 2-cells produces unique $\wt{\phi}$ and
  $\wh{\phi}$.  This uniqueness implies that $H$ strictly preserves
  identity 2-cells and vertical composition of 2-cells.
  
  We now construct the pseudofunctoriality constraints for $H$. We have
  specifically chosen the definition of $H$ on 1-cells to strictly
  preserve identities. Given composable 1-cells
  \[
  [s_1, \al_1, t_1] \cn [x, f, z] \to [x', f', z'] \mathrm{\quad and\quad}
  [s_2, \al_2, t_2] \cn [x', f', z'] \to [x'', f'', z''],
  \]
  we must construct a 2-cell isomorphism 
  \[
  [\wh{s}_2, t_2] \circ [\wh{s}_1, t_1] \cong [\wh{s_2 \circ s_1}, t_2 \circ t_1].
  \]
  The second coordinate will be the identity 2-cell, and we now
  produce a 2-cell isomorphism $\wh{s_2}\circ \wh{s_1} \cong \wh{s_2
    \circ s_1}$.
  
  We have $\wt{s}_2$, $\wt{s}_1$, and $\wt{s_2\circ s_1}$ constructed
  as above.  Let $\al_{21}$ denote the pasting of $\al_2$ with
  $\al_1$.  Then
  \[
  (\wt{s_2\circ s_1},\ol{\al}_{21}, \wt{\al}_{21}) \mathrm{\quad and \quad}
  (\wt{s}_2\circ\wt{s}_1, \ol{\al}_2 * \ol{\al}_1, (\wt{\al}_2*s_1) \circ (\wt{s}_2*\wt{\al}_1))
  \]
  are two lifts of $(\wh{f}''s_2s_1,F(t_2t_1),\al_{21})$.  Thus, by
  \cref{defn:opcart} \eqref{it:opcart-2} with $\rho$ and $\be$
  identity 2-cells, there exists a unique isomorphism $\de \cn
  \wt{s}_2 \circ \wt{s}_1 \cong \wt{s_2\circ s_1} $ compatible with
  the rest of the data. We define the pseudofunctoriality constraint
  to have first coordinate
  \[
  \wh{s}_2 \circ \wh{s}_1
  \xRightarrow{\wh{\al}_2 * \wh{\al}_1}
  \wt{s}_2 \circ \wt{s}_1
  \xRightarrow{\delta}
  \wt{s_2 \circ s_1}
  \xRightarrow{(\wh{\al}_{21})^\inv}
  \wh{s_2 \circ s_1}.
  \]
  Applying $P$ to this composite gives the identity, so we have
  constructed a 2-cell in $\pb(P,F)$ as desired. The pseudofunctor
  axioms follow from the uniqueness of the 2-dimensional lifts. This
  concludes the construction of $H$ as a pseudofunctor $P \downarrow F
  \to \pb(P,F)$.

  We next consider the composite
  \[
  \pb(P, F) \stackrel{i}{\hookrightarrow} P \downarrow F \stackrel{H}{\to} \pb(P, F).
  \]
  On objects and 1-cells, this composite is the identity by
  construction. For 2-cells, one checks that if $P\phi = F\ga$ then
  the definition of $H$ on $[\phi,\ga]$ gives the equalities $\phi = \wt{\phi} =
  \wh{\phi}$ by uniqueness. The pseudofunctoriality constraints are
  constructed using the unique 2-dimensional lifts, which in this case
  are all identities, so the composite above is the identity
  2-functor.

  Now consider 
  \[
  P \downarrow F \stackrel{H}{\to} \pb(P, F) \stackrel{i}{\hookrightarrow} P \downarrow F.
  \]
  On objects, this maps $[x, f, z]$ to $[\wh{x}, 1, z]$, so we define
  the component of a pseudonatural transformation $\eta: 1 \Rightarrow
  iH$ to be the 1-cell $[\wh{f}, 1_f, 1_z]$. For a 1-cell $[s, \al,
    t]$, the pseudonaturality constraint is given by
  \[
  [\wt{\al} \circ (\wh{\al} * \wh{f}), 1_t] \cn [\wh{s}\wh{f},1_{Ft \circ f},t] \cong [\wh{f}\,'s,\al,t].
  \]
  One can check that this is indeed a 2-cell in $P \downarrow F$. The
  pseudonaturality axiom reduces to the equality
  \[
  \wt{\al}_{21} \circ (\delta * \wh{f}\,) = (\wt{\al}_2 * s_1) \circ (\wt{s}_2 * \wt{\al}_1),
  \]
  which is precisely the compatibility \eqref{eq:opcart-diag2} for
  $\de$.
\end{proof}

\begin{rmk}
  Our proof of \cref{prop:fiber_vs_htpyfiber} only makes use of
  invertibility for $\al$ and $\al'$.  Therefore the same argument
  also proves, without the assumption of invertible 2-cells in $L$,
  that there is a homotopy equivalence between the classifying spaces
  of the pseudo-comma object (defined via a pseudonatural
  transformation in the square, but with a universal property still up
  to isomorphism) and the pullback.
\end{rmk}

\begin{rmk}
  In the case of 1-categories, one has an adjunction between the
  pullback and comma object.  The construction we have given in the
  proof of \cref{prop:fiber_vs_htpyfiber} can be extended to a
  biadjunction, but we leave that to the interested reader.
\end{rmk}

Now we apply \cref{prop:fiber_vs_htpyfiber} to give a version of
Quillen's Theorem B for opfibrations of 2-categories.  We use the
following terms.
\begin{defn}
  Suppose $F\cn C \to D$ is a 2-functor and $\phi\cn x \to y$ is a
  1-cell in $D$.
  \begin{enumerate}
  \item Let $\laco{F,\nh{x}}$, respectively $\laco{F,\nh{y}}$, denote
    the lax comma object for the cospan given by $F$ and $x$,
    respectively $y$, regarded as a 2-functor $* \to D$.
  \item Regard $\phi$ as a 2-natural transformation of 2-functors $*
    \to D$. Let $\phi_*\cn \laco{F,\nh{x}} \to \laco{F,\nh{y}}$ denote
    the 2-functor given by the universal property described in
    \cref{prop:laco-univ} with $\la$ given by the following pasting.
    \[
    \begin{tikzpicture}[x=20mm,y=16mm]
      \draw[tikzob,mm] 
      (0,0) node (z) {*}
      (1,1) node (x) {X}
      (1,0) node (y) {Y}
      (0,1) node (laco) {\laco{F,\nh{x}}}
      (.5,.65) node[rotate=225,2label={below,\pi}] {\Rightarrow}
      (.5,0) node[rotate=270,2label={below,\phi}] {\Rightarrow}
      ;
      \path[tikzar,mm] 
      (z) edge[swap,bend right] node {\nh{y\,}} (y)
      (z) edge[bend left] node {\nh{x\,}} (y)
      (x) edge node {F} (y)
      (laco) edge node {} (x)
      (laco) edge[swap] node {} (z)
      ;
    \end{tikzpicture}
    \]
    We call $\phi_*\cn \laco{F,\nh{x}} \to \laco{F,\nh{y}}$ the
    \emph{base change} 2-functor induced by $\phi$.
  \end{enumerate}
\end{defn}

\begin{thm}[\cite{Ceg11Homotopy}]\label{thm:cegarra}
  Suppose $F\cn C \to D$ is a 2-functor such that for every 1-cell
  $\phi\cn x \to y$ in $D$ the base change 2-functor $\phi_*\colon
  \laco{F,\nh{x}} \to \laco{F,\nh{y}}$ is a homotopy equivalence on
  classifying spaces.  Then for each $x \in D$ and each $\dot{x}\in
  F^\inv(x)$, the sequence
  \[
  \laco{F,\nh{x}} \to C \to D
  \] 
  induces a homotopy fiber sequence on classifying spaces with respect
  to the basepoints $[\dot{x},1] \in B\laco{F,\nh{x}}$, $\dot{x} \in
  BC$, and $x \in BD$.
\end{thm}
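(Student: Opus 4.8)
The plan is to run Quillen's classical argument for Theorem~B, but to package the homotopy colimit over $D$ as a single lax comma object. Set $E = \laco{F,1_D}$, the lax comma object of the cospan $C \fto{F} D \fot{1_D} D$, with projections $p_C\cn E \to C$ and $p_D\cn E \to D$ and universal lax transformation $\pi$. Running the argument of \cref{lem:lp-id} with the two legs of the cospan interchanged shows that $p_C$ is a homotopy equivalence on classifying spaces: the assignment $J\cn c \mapsto [c,1_{Fc},Fc]$ is a section with $p_C J = 1_C$, and the 1-cells $[1_c,1_f,f]\cn [c,1_{Fc},Fc] \to [c,f,d]$ assemble into a lax transformation $J p_C \Rightarrow 1_E$, so \cref{thm:trans-htpy} applies. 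Moreover $\pi$ has component $f\cn Fc \to d$ at $[c,f,d]$, hence is a lax transformation $F p_C \Rightarrow p_D$; by \cref{thm:trans-htpy} again, $BF \circ Bp_C \hty Bp_D$. Since $Bp_C$ is an equivalence, the homotopy fiber of $BF$ over $x$ agrees with that of $Bp_D$ over $x$, and the fiber inclusion followed by $Bp_C$ is exactly the projection $\laco{F,\nh{x}} \to C$ of the statement; so it remains to identify the homotopy fiber of $Bp_D$ over $x$ with $B\laco{F,\nh{x}}$.

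Next I would analyze $p_D$ through the nerve. The key bookkeeping step is a \emph{fiber-decomposition lemma}: over a $q$-simplex $\sigma = (d_0 \to \cdots \to d_q)$ of $ND$, the simplices of $NE$ lying over $\sigma$ form the nerve of the fiber $\laco{F,\nh{d_0}}$ over the \emph{initial} vertex, with the later vertices recorded by the base change 2-functors. This is the standard nerve formula for a Grothendieck-type construction and uses the lax structure of $E$ to push objects forward along $\sigma$. Consequently $BE$ is the realization of a simplicial space $Z_\bullet$ over the (discrete) simplicial set $ND$ with
\[
Z_q = \coprod_{\sigma \in N_q D} B\laco{F,\nh{d_0}},
\]
where $d_0$ denotes the initial vertex of $\sigma$; every degeneracy and every face $d_i$ with $i>0$ preserves the initial vertex and acts as the identity on the comma factor, while $d_0$ deletes $d_0$ and acts by the base change $(d_0 \to d_1)_*\cn \laco{F,\nh{d_0}} \to \laco{F,\nh{d_1}}$.

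The crux is the quasifibration step. For each $\theta\cn [m] \to [q]$ in $\Delta$, the square relating $Z_q \to Z_m$ to $N_q D \to N_m D$ is homotopy cartesian precisely when the induced map on comma factors is a homotopy equivalence; this map is the identity when $\theta$ fixes the initial vertex, and is a base change $\phi_*$ along the composite $d_0 \to d_{\theta(0)}$ otherwise. The hypothesis of the theorem is exactly that every such $\phi_*$ is a homotopy equivalence, so all of these squares are homotopy cartesian. I would then invoke the realization theorem for simplicial spaces (Segal, Bousfield--Friedlander): for a degreewise-homotopy-cartesian map to a simplicial set, realization sends these to homotopy-cartesian squares, so the homotopy fiber of $Bp_D = |Z_\bullet| \to |ND| = BD$ over a vertex $x$ is the level-$0$ fiber, namely $B\laco{F,\nh{x}}$. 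Tracking the basepoint $[\dot{x},1]$, which maps to $\dot{x} \in BC$ and to $x \in BD$, and transporting along the equivalence $Bp_C$, this produces the asserted homotopy fiber sequence $\laco{F,\nh{x}} \to C \to D$.

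I expect the main obstacle to be this last step: verifying the hypotheses of the simplicial-space fibration theorem in the present setting, i.e.\ confirming that $Z_\bullet$ is suitably proper and that the base-change assumption really does supply the degreewise homotopy-cartesian condition needed for realization to preserve homotopy fibers. The 2-categorical nerve contributes only additional bookkeeping --- each $B\laco{F,\nh{x}}$ is itself the realization of a multisimplicial nerve, and the fiber-decomposition lemma must be checked against the normal oplax nerve of \cref{defn:nopl-nerve} --- but it does not change the essential homotopy-theoretic input.
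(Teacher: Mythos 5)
The first thing to note is that the paper does not prove this statement at all: it is imported from Cegarra \cite{Ceg11Homotopy}, and the only argument the paper supplies is the remark immediately following, which deduces the form stated here (with $\laco{F,\nh{x}}$ and covariant base change) from Cegarra's form (with $\laco{\nh{x},F}$ and base changes in the opposite direction) via the duality of \cref{lem:lax-coop} together with the natural equivalence $BC \simeq B(C^\coop)$. Your proposal is therefore a reconstruction of the underlying Theorem~B argument itself. Its global skeleton is sound and is essentially the classical strategy: $p_C\cn \laco{F,1_D}\to C$ is an equivalence by the evident dual of \cref{lem:lp-id}, the universal lax transformation $\pi$ identifies $Bp_D$ with $BF\circ Bp_C$ up to homotopy, and the base-change hypothesis is exactly what one feeds into a realization/quasifibration lemma.

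The genuine gap is the \emph{fiber-decomposition lemma}. It is not true that the $q$-simplices of $N\laco{F,1_D}$ lying over $\si=(d_0\to\cdots\to d_q)$ form $N_q\laco{F,\nh{d_0}}$. By \cref{prop:laco-univ}, a $q$-simplex of $N\laco{F,1_D}$ over $\si$ is a pair $(\om,\la)$ with $\om\in N_qC$ and $\la$ a lax transformation $F\om\Rightarrow\si$ of 2-functors $\sO(q)\to D$; it records 1-cells $F\om_i\to d_i$ into \emph{every} vertex of $\si$, whereas a $q$-simplex of $N\laco{F,\nh{d_0}}$ records 1-cells $F\om_i\to d_0$ only. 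Already for $q=1$ these sets differ, so there is no ``standard nerve formula'' identifying them; the comparison is a homotopy equivalence, not a bijection, and it is precisely the oplax-initial-object argument of \cref{lem:lp-initial} (used in \cref{cor:two_isos} in the form $\ldar{F,\si}\hty\laco{F,\si(0)}$) that supplies it. Consequently your simplicial space $Z_\bullet$ with $Z_q=\coprod_{\si}B\laco{F,\nh{d_0}}$ is not a model for $B\laco{F,1_D}$ on the nose: one must instead build the genuine two-sided bisimplicial object (as in \cref{defn:E0} and \cref{lem:p-filtration}), identify its other filtration with $C$ using \cref{lem:q-filtration} and \cref{cor:lp-terminal}, and only then apply the Bousfield--Friedlander/Quillen realization lemma to the vertical filtration, transporting the equivalence $\ldar{F,\si}\hty\laco{F,\si(0)}$ through the face and degeneracy maps to see that the base changes appear as you describe (cf.\ \cref{rmk:Finv-base-change}). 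So the strategy is salvageable --- and is essentially Cegarra's --- but the decomposition step as written is false, and the missing comparison is where the real content of the proof lives.
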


\begin{rmk}
  The result in \cite{Ceg11Homotopy} is given and proved for
  $\laco{\nh{x},F}$, with the base change maps going in the opposite
  direction. The version given here follows from the original,
  together with \cref{lem:lax-coop} and the fact that given a
  2-category $C$, there is a natural homotopy equivalence $BC \simeq
  B(C^\coop)$ (see \cite{CCG10Nerves}).
\end{rmk}

If $F$ is an opfibration and the 2-cells in its codomain are
invertible, then \cref{prop:fiber_vs_htpyfiber} shows that we can
replace $\laco{F,\nh{x}}$ with $F^\inv(x)$ to obtain the following
result.
\begin{cor}\label{cor:quillenB-opfib-2}
  Suppose that all 2-cells in $D$ are invertible, and $F\cn C \to D$
  is an opfibration such that, for every 1-cell $\phi\cn x \to y$ in
  $D$ the the base change 2-functor $\phi_*\colon \laco{F,\nh{x}} \to
  \laco{F,\nh{y}}$ is a homotopy equivalence on classifying spaces.
  Then for each $x \in D$ and each $\dot{x}\in F^\inv(x)$, the
  sequence
  \[
  F^\inv(x) \to C \to D
  \]
  induces a homotopy fiber sequence on classifying spaces with respect
  to the basepoints $\dot{x} \in BF^\inv(x)$, $\dot{x} \in BC$, and $x
  \in BD$.
\end{cor}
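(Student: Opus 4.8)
The plan is to obtain this corollary as a formal consequence of Cegarra's theorem (\cref{thm:cegarra}) and the comparison \cref{prop:fiber_vs_htpyfiber}, precisely along the lines sketched in the paragraph preceding the statement. Since the hypothesis on the base change 2-functors $\phi_*$ is identical to that of \cref{thm:cegarra}, that theorem applies verbatim and yields that
\[
\laco{F,\nh{x}} \to C \to D
\]
is a homotopy fiber sequence with respect to the basepoints $[\dot{x},1]$, $\dot{x}$, and $x$. The entire content of the corollary then reduces to replacing the lax comma object $\laco{F,\nh{x}}$ by the strict fiber $F^\inv(x)$ in a way that respects the map to $C$ and the choice of basepoint.

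First I would identify the strict fiber $F^\inv(x)$ with the strict pullback $\pb(F,\nh{x})$ of the cospan $C \fto{F} D \xleftarrow{\nh{x}} *$. Unwinding \cref{defn:pb_notation}, a $k$-cell of $\pb(F,\nh{x})$ is a pair $(w,*)$ with $w$ a $k$-cell of $C$ satisfying $F(w)=\nh{x}(*)$: on objects this says $F(w)=x$, and on $1$- and $2$-cells that $F(w)$ is an identity. This is exactly the data of a $k$-cell of $F^\inv(x)$. Under this identification the inclusion $i\cn \pb(F,\nh{x}) \to \laco{F,\nh{x}}$ of \cref{prop:fiber_vs_htpyfiber} sends an object $\dot{x}$ of the fiber to $[\dot{x},1_x,*]=[\dot{x},1]$, and the composite of $i$ with the projection $p\cn \laco{F,\nh{x}} \to C$ of \cref{prop:laco-univ} is the canonical inclusion $F^\inv(x)\hookrightarrow C$; note that $p\circ i$ is a \emph{strict} equality of 2-functors.

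Next I would invoke \cref{prop:fiber_vs_htpyfiber}, taking its opfibration to be $F$ and its arbitrary 2-functor to be $\nh{x}\cn * \to D$. As all 2-cells of $D$ are invertible by hypothesis, the proposition gives a homotopy equivalence $Bi\cn BF^\inv(x) \fto{\hty} B\laco{F,\nh{x}}$. By the strict factorization recorded above, $Bi$ commutes with the two projections to $BC$ and carries the basepoint $\dot{x}$ to $[\dot{x},1]$; thus $BF^\inv(x)$ and $B\laco{F,\nh{x}}$ are identified as based spaces over $BC$.

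Finally, the conclusion is formal: a based homotopy fiber sequence $A \to C \to D$ is characterized by the comparison map from $A$ to the homotopy fiber of $BC \to BD$ being a weak equivalence, and precomposing with a based homotopy equivalence over $BC$ preserves this property. Transporting the sequence of \cref{thm:cegarra} along $Bi$ therefore yields that $F^\inv(x) \to C \to D$ is a homotopy fiber sequence with the asserted basepoints. Since the input results do all the real work, there is no deep obstacle here; the only step needing genuine care is verifying that $Bi$ is compatible with both projections to $BC$ and with the basepoints, as that is what permits the fiber sequence to transport along $Bi$. This compatibility is exactly the identity $p \circ i = (\text{inclusion of }F^\inv(x))$ established in the first step.
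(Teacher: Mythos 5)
Your proposal is correct and follows essentially the same route as the paper: the paper's (implicit) proof is exactly to apply \cref{thm:cegarra} under the given base-change hypothesis and then use \cref{prop:fiber_vs_htpyfiber} to replace $\laco{F,\nh{x}}$ by the strict fiber $F^\inv(x)$. The extra care you take in identifying $F^\inv(x)$ with $\pb(F,\nh{x})$ and checking compatibility of $i$ with the projection to $C$ and with the basepoints is a worthwhile elaboration of what the paper leaves unstated, but it is not a different argument.
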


\section{Homology spectral sequence for an opfibration}\label{sec:spectral-sequence}

In this section we let $F\cn C \to D$ be a 2-functor and construct an
associated homology spectral sequence.  This begins with a detailed
description of the normal oplax nerve $ND$ (\cref{defn:nopl-nerve}) in
terms of Street's oriented simplices.  Then in
\cref{sec:bisimp_complex} we define a bisimplicial set associated to
$F$.  In \cref{sec:ss_from_E0} we analyze the spectral sequence
associated to this bisimplicial set and give an identification of its
$E^2$ page in the case that $F$ is an opfibration
(\cref{cor:two_isos}).

\subsection{Oriented simplices}

We give the 2-truncated version of Street's orientals from \cite{Str87Algebra}. In \textit{loc. cit.}, Street
defines the $n$th oriental to be the ``free strict $n$-category generated by an $n$-simplex,'' and thus our orientals
are obtained from his by forcing all $k$-cells to be identities for $k > 2$.

\begin{defn}[\cite{Str87Algebra}]\label{defn:orientals}
  For $p \ge 0$, the \emph{oriental} $\sO(p)$ is the 2-category
  defined as follows.
  \begin{itemize}
  \item The objects of $\sO(p)$ are the natural numbers $0, \ldots,
    p$.
  \item The nonidentity 1-cells of $\sO(p)$ are generated by pairs
    $(i,j)\cn i \to j$ for each $i < j$.
  \item The nonidentity 2-cells of $\sO(p)$ are generated by triples
    $(i,j,k)$ for each $i < j < k$, with source and target as
    indicated below.
    \[
    \begin{tikzpicture}[x=20mm,y=19mm]
      \draw[tikzob,mm] 
      (-1,1) node (i) {i}
      (1,1) node (k) {k}
      (0,0) node (j) {j};
      \path[tikzar,mm,style={font=\small}] 
      (i) edge[swap] node {(i,j)} (j)
      (j) edge[swap] node {(j,k)} (k)
      (i) edge node {(i,k)} (k);    
      \draw[tikzob,mm]
      (-.15,.65) node[2label={above,}] {\Downarrow}
      node[right,font=\small] {\ (i,j,k)};
    \end{tikzpicture}
    \]
  \end{itemize}
  The 2-cells are subject to the condition that each quadruple $i < j
  < k < l$ yields a commuting tetrahedron; that is, an equality of
  pastings as below.
  \[
  \begin{tikzpicture}[x=15mm,y=15mm,baseline=(-60:1)]
    \draw[tikzob,mm] 
    (0,0) node (i) {i}
      (-90:.55) node[rotate=-60,font=\small] {\Downarrow}
    (-120:1) node (j) {j}
    (-60:1) node (k) {k}
      ++(90:.55) node[font=\small] {\Downarrow}
    (0:1) node (l) {l};
    \path[tikzar,mm] 
    (i) edge (j)
    (j) edge (k)
    (k) edge (l)
    (i) edge (l)
    (i) edge (k);    
  \end{tikzpicture}
  =
  \begin{tikzpicture}[x=15mm,y=15mm,baseline=(-60:1)]
    \draw[tikzob,mm] 
    (0,0) node (i) {i}
    (-60:1) node (j) {j}
      +(90:.55) node[font=\small] {\Downarrow}
    ++(0:1) node (k) {k}
    (0:1) node (l) {l}
      ++(-90:.55) node[rotate=60,font=\small] {\Downarrow};
    \path[tikzar,mm] 
    (i) edge (j)
    (j) edge (k)
    (k) edge (l)
    (i) edge (l)
    (j) edge (l);
  \end{tikzpicture}
  \]
\end{defn}

\begin{rmk}\label{rmk:reading-tetrahedra}
  Two observations may help the reader parse these tetrahedra: First,
  the boundary of the two halves consists of the ``short spine'' $i
  \to l$ along the top as the source, and the ``long spine'' $i \to j
  \to k \to l$ along the bottom 3 edges as the target.  Second, each
  2-cell has a single arrow as its source, and a composite of two
  arrows as its target.  We use this oplax direction in virtually all
  of the discussion below.
\end{rmk}

The following two propositions can be read off from \cite{Gur2009Nerves} following 
\cite{Str82Nerves, Str87Algebra}.
\begin{prop}\label{prop:orientals-give-nopl-simplices}
  For $p \ge 0$, normal oplax functors $[p] \to D$ are in bijective
  correspondence with strict 2-functors $\sO(p) \to D$.
\end{prop}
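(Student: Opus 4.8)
The plan is to present both sides of the claimed bijection as the \emph{same} combinatorial data subject to the \emph{same} relations, and then match them up directly. The key fact I would use is that, by its very construction in \cref{defn:orientals}, the oriental $\sO(p)$ is presented as a 2-category by generators and relations: its 1-cells are freely generated by the arrows $(i,j)$ with $i<j$ (so the 1-cell level is the free category on the complete order graph, with $(i,k)$ and $(j,k)\circ(i,j)$ genuinely distinct parallel 1-cells), its 2-cells are generated under horizontal and vertical composition by the cells $(i,j,k)\cn (i,k)\TO (j,k)\circ(i,j)$ with $i<j<k$, and the only relations are the tetrahedron equations indexed by quadruples $i<j<k<l$. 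Consequently, a strict 2-functor $G\cn \sO(p)\to D$ is uniquely determined by, and may be freely specified by, a choice of objects $G(i)$, arbitrary 1-cells $G(i,j)\cn G(i)\to G(j)$ for $i<j$, and 2-cells $G(i,j,k)\cn G(i,k)\TO G(j,k)\circ G(i,j)$ for $i<j<k$, subject precisely to the images of the tetrahedron equations.

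Next I would unpack a normal oplax functor $F\cn [p]\to D$ in the sense of \cref{defn:functor}. Since $[p]$ is a poset regarded as a locally discrete 2-category, the underlying functor data reduce to objects $F_i := F(i)$ together with a single 1-cell $F_{ij}\cn F_i\to F_j$ for each $i\le j$; normality forces $F(1_i)=1_{F_i}$, so $F_{ii}=1$. The oplax structure 2-cells $F_2$ are indexed by composable pairs in $[p]$, that is, by triples $i\le j\le k$, and take the form $(F_2)_{ijk}\cn F_{ik}\TO F_{jk}\circ F_{ij}$. The two unit axioms, combined with normality, force $(F_2)_{ijk}$ to be an identity whenever $i=j$ or $j=k$, so the only free structure cells are exactly those with $i<j<k$. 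Finally, the single associativity axiom of an oplax functor becomes vacuous on a triple with a repeated index and, on a strictly increasing quadruple $i<j<k<l$, is exactly the cocycle equation relating the four cells $(F_2)_{ijk}$, $(F_2)_{ikl}$, $(F_2)_{ijl}$, and $(F_2)_{jkl}$.

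With both descriptions in hand, the bijection is immediate: send $F$ to the 2-functor $G$ with $G(i)=F_i$, $G(i,j)=F_{ij}$, and $G(i,j,k)=(F_2)_{ijk}$, and conversely read a normal oplax functor off the generator data of $G$, declaring the degenerate structure cells to be identities. The two constructions are visibly mutually inverse, once one checks that the associativity cocycle for $F$ and the image of the tetrahedron relation for $G$ are literally the same pasting equation. This matching is where I expect the only real work to lie: it requires care with the oplax orientation (each generating 2-cell has a single edge as source and a composite of two edges as target, per \cref{rmk:reading-tetrahedra}) and with the two halves of the tetrahedron --- the short spine $i\to l$ against the long spine $i\to j\to k\to l$ --- so that the source/target data and the whiskerings agree with those appearing in the oplax associativity axiom. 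The degenerate-index bookkeeping is routine but should be recorded: normality of $F$ corresponds exactly to strict preservation of identity 1-cells by $G$, and the vanishing of $(F_2)_{ijk}$ for $i=j$ or $j=k$ corresponds to $\sO(p)$ having no 2-cell generators outside the range $i<j<k$.
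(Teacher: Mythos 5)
Your argument is correct: the paper does not prove this proposition but cites it to Street, and your generators-and-relations unpacking---matching the freely generated $1$-cells $(i,j)$ and generating $2$-cells $(i,j,k)$ of $\sO(p)$ modulo the tetrahedron relations against the data of a normal oplax functor with its degenerate structure cells forced to be identities and its associativity axiom reduced to the cocycle condition on $i<j<k<l$---is exactly the standard proof, i.e., what Street's construction amounts to in the $2$-categorical truncation. The orientation check you flag (source a single edge, target the composite through the middle vertex, matching $F_2\cn F(g\circ f)\Rightarrow Fg\circ Ff$) is indeed the only place where care is needed, and you have it right.
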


\begin{prop}\label{prop:orientals-cosimplicial}
  The assignment $[p] \mapsto \sO(p)$ is the function on objects of a
  functor $\sO(-) \cn \De \to \iicat$. This cosimplicial 2-category
  has an associated nerve functor $N \cn \iicat \to \sset$ which is
  the normal oplax nerve (\cref{defn:nopl-nerve}).
\end{prop}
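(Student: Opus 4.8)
The plan is to first make the assignment $\sO(-)$ into a functor explicitly, and then to promote the levelwise bijection of \cref{prop:orientals-give-nopl-simplices} into an isomorphism of simplicial sets. (Alternatively one may invoke Street's cosimplicial $\omega$-category from \cite{Str87Algebra} and $2$-truncate, but the explicit route below is self-contained.) First I would define, for each order-preserving $\theta \cn [p] \to [q]$, a strict 2-functor $\sO(\theta)\cn \sO(p) \to \sO(q)$. On objects set $i \mapsto \theta i$. On a generating 1-cell $(i,j)$ send it to $(\theta i, \theta j)$ when $\theta i < \theta j$ and to $1_{\theta i}$ when $\theta i = \theta j$; on a generating 2-cell $(i,j,k)$ send it to the generator $(\theta i, \theta j, \theta k)$ when the three images are distinct, and otherwise to an identity 2-cell. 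Inspecting source and target in each collapsed case---for instance $\theta i = \theta j < \theta k$, where both the short and long spines of \cref{rmk:reading-tetrahedra} become $(\theta i, \theta k)$---shows that an identity is both forced and well-typed. Since by \cref{defn:orientals} the 2-cells of $\sO(p)$ are generated by the triples subject only to the tetrahedron relations, it then remains to check that each tetrahedron for $i<j<k<l$ is carried to a valid equality in $\sO(q)$: this is the genuine tetrahedron on the image when $\theta i,\theta j,\theta k,\theta l$ are distinct, and a coherence among identities otherwise. Functoriality $\sO(1)=1$ and $\sO(\psi\theta)=\sO(\psi)\sO(\theta)$ follow directly from the formulas, including in the collapsed cases, yielding the cosimplicial 2-category $\sO(-)\cn \De \to \iicat$.

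The associated nerve has $p$-simplices $N(X)_p = \iicat(\sO(p),X)$ with simplicial operator $\theta^* = (-)\circ\sO(\theta)$. By \cref{prop:orientals-give-nopl-simplices} there is for each $p$ a bijection $\Phi_p$ from these strict 2-functors to the normal oplax functors $[p]\to X$ (\cref{defn:nopl-nerve}), under which $\wt G$ corresponds to the $G$ with $G(i\le j)=\wt G(i,j)$ and oplax constraint $G_2^{i,j,k}=\wt G(i,j,k)$; the directions match by \cref{rmk:reading-tetrahedra}. To finish I would verify that the $\Phi_p$ are natural in $[p]$, i.e.\ that $\Phi_p(G\circ\theta)=\Phi_q(G)\circ\sO(\theta)$ for every $\theta\cn[p]\to[q]$ and every normal oplax $G\cn[q]\to X$, where on the nerve side $\theta^*$ is precomposition with $\theta$ regarded as a locally discrete 2-functor. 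It suffices to treat the cofaces $\de^i$ and codegeneracies $\si^i$. On $0$- and $1$-cells both sides reduce to $G(\theta i)$ and $G(\theta i\le\theta j)$---with a collapsed edge $\theta i=\theta j$ giving $1_{G(\theta i)}$ on the strict side, matched by $G(\theta i\le\theta i)=1$ from normality on the other---so the only content is at the level of $2$-cells.

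The main obstacle is precisely this $2$-cell level for the codegeneracies. For $\si^i$ the 2-functor $\sO(\si^i)$ sends the edge $(i,i+1)$ and the triples $(i,i+1,k)$ and $(a,i,i+1)$ to identities, while on the nerve side $\si^{i*}G$ is the normal oplax functor obtained by inserting an identity $1$-cell in slot $i$. Matching the two comes down to checking that the oplax constraint of $G$ on a triple with a collapsed edge is already an identity; this is exactly the two unit axioms of \cref{defn:functor} together with normality $G_0=1$, which force $G_2^{i,j,k}=1$ whenever one of the two factors is an identity $1$-cell. This is the point at which the word \emph{normal} in the normal oplax nerve is essential, since it guarantees that the cells forced to be identities by $\sO(\si^i)$ are identities on the oplax side as well. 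With this case settled the cofaces are the straightforward injective case (no collapsing), so all naturality squares commute and the $\Phi_p$ assemble into an isomorphism $N\iso$ the normal oplax nerve, manifestly natural in $X$.
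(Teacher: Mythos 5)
The paper gives no proof of this proposition: it is recalled from Street's paper on orientals and stated with a citation, so there is no internal argument to compare against. Your explicit verification is correct and self-contained. The construction of $\sO(\theta)$ on generators, with collapsed edges and triples sent to identities, is the right one, and your source/target check in the case $\theta i=\theta j<\theta k$ (both spines of the collapsed triangle becoming $(\theta i,\theta k)$) is exactly the point that makes the assignment well-typed; the preservation of the tetrahedron relations in the collapsed cases and the functoriality $\sO(\psi\theta)=\sO(\psi)\sO(\theta)$ are routine as you say. You also correctly isolate the one place where something could go wrong, namely naturality of the levelwise bijection of \cref{prop:orientals-give-nopl-simplices} with respect to codegeneracies: $\sO(\si^i)$ kills the triples containing the collapsed edge, and this matches precomposition on the oplax side only because normality ($G_0=1$) plus the two unit axioms of \cref{defn:functor} force the constraint $G_2$ to be an identity whenever one of its arguments is an identity $1$-cell. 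This is precisely why the nerve in \cref{defn:nopl-nerve} must be the \emph{normal} oplax nerve, and your proof makes that visible in a way the bare citation does not. The only stylistic gap is that the tetrahedron-preservation check in the collapsed cases is waved at rather than written out, but each such case reduces to both pastings being the same single generator whiskered by identities, so nothing is at risk.
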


\begin{rmk}\label{rmk:oplax_init_term_orientals}
  Note that the oriental $\sO(p)$ has oplax initial object 0 and oplax
  terminal object $p$ (\cref{oplax-init}).
\end{rmk}

Unpacking the definition of $\oplax(\sO(p), D)$, we have the following
explicit description of its 1- and 2-cells.  For $\si, \si'\cn \sO(p)
\to D$, a 1-cell $f\cn \si \to \si'$ is an oplax transformation. It
consists of components $f_i\cn \si_i \to \si'_i$ for each $i$ together
with 2-cells $f_{ij}$ for each $i < j$ as shown below.
\[
\begin{tikzpicture}[x=20mm,y=15mm]
  \draw[tikzob,mm] 
  (0,0) node (a) {\si_i}
  (1,0) node (b) {\si_j}
  (0,-1) node (c) {\si'_i}
  (1,-1) node (d) {\si'_j}
  ;
  \path[tikzar,mm] 
  (a) edge node {\si_{(i,j)}} (b)
  (c) edge[swap] node {\si'_{(i,j)}} (d)
  (a) edge[swap] node {f_i} (c)
  (b) edge node {f_j} (d)
  ;
  \draw[tikzob,mm]
  (.5,-.4) node[rotate=215,2label={above,}] {\Rightarrow} 
  node[below right] {f_{ij}}
  ;
\end{tikzpicture}
\]
These data are subject to the axiom that a triple $i<j<k$ yields 
the following equality of pasting diagrams, where the unlabeled
2-cells are given by $\si(i,j,k)$ and $\si'(i,j,k)$, respectively.
\[
\begin{tikzpicture}[x=20mm,y=20mm]
  \newcommand{\boundary}{
    \draw[tikzob,mm] 
    (0,0) node (si) {\si_i}
    (2,0) node (sk) {\si_k}
    (0,-1) node (spi) {\si'_i}
    (1,-1.5) node (spj) {\si'_j}
    (2,-1) node (spk) {\si'_k}
    ;
    \path[tikzar,mm] 
    (si) edge node {} (spi)
    (sk) edge node {} (spk)
    (si) edge[bend left] node {} (sk)
    (spi) edge node {} (spj)
    (spj) edge node {} (spk)
    ;
  }
  \boundary
  \draw[tikzob,mm] 
  (1,-.5) node (sj) {\si_j}
  (1,0) node[rotate=270,2label={above,}] {\Rightarrow} 
  (.5,-.8) node[rotate=215,2label={above,}] {\Rightarrow}
  node[above left] {f_{ij}}
  (1.6,-.8) node[rotate=215,2label={above,}] {\Rightarrow} 
  node[above left] {f_{jk}}
  (2.75,-.5) node[2label={above,}] {=}
  ;
  \path[tikzar,mm] 
  (si) edge node {} (sj)
  (sj) edge node {} (sk)
  (sj) edge node {} (spj)
  ;
  \begin{scope}[shift={(3.5,0)}]
    \boundary
    \draw[tikzob,mm] 
    (1,-.35) node[rotate=215,2label={above,}] {\Rightarrow} 
    node[above left] {f_{ik}}
    (1,-1) node[rotate=270,2label={above,}] {\Rightarrow} 
    ;
    \path[tikzar,mm] 
    (spi) edge[bend left] node {} (spk)
    ;
  \end{scope}
\end{tikzpicture}
\]

A 2-cell $m\cn f \Rightarrow f'$ between 1-cells $f, f'\cn \si \to
\si'$ is a modification.  This data consists of 2-cells $m_i \cn f_i
\Rightarrow f_i'$ for each $i$, subject to the axiom that the
following ``soup-can'' pastings are equal.
\[
\begin{tikzpicture}[x=25mm,y=20mm]
  \newcommand{\boundary}{
    \draw[tikzob,mm] 
    (0,0) node (a) {\si_i}
    (1,0) node (b) {\si_j}
    (0,-1) node (c) {\si'_i}
    (1,-1) node (d) {\si'_j}
    ;
    \path[tikzar,mm] 
    (a) edge node {} (b)
    (c) edge[swap] node {} (d)
    (b) edge[bend left=60] node {f_j} (d) 
    (a) edge[swap, bend right=60] node {f'_i} (c) 
    ;
  }
  \boundary
  \path[tikzar,mm] 
  (a) edge[bend left=60] node {f_i} (c) 
  ;
  \draw[tikzob,mm]
  (.7,-.4) node[rotate=215,2label={above,}] {\Rightarrow} 
  node[below right] {f_{ij}}
  (0,-.5) node[2label={above,}] {\Leftarrow}
  node[below=2pt] {m_i}
  (1.75,-.5) node[2label={above,}] {=}
  ;
  \begin{scope}[shift={(2.5,0)}]
    \boundary
    \path[tikzar,mm] 
    (b) edge[swap, bend right=60] node {f'_j} (d) 
    ;
    \draw[tikzob,mm]
    (.2,-.4) node[rotate=215,2label={above,}] {\Rightarrow} 
    node[below right] {f'_{ij}}
    (1,-.5) node[2label={above,}] {\Leftarrow}
    node[below=2pt] {m_j}
    ;
  \end{scope}
\end{tikzpicture}
\]

\subsection{Constructing a bisimplicial set}\label{sec:bisimp_complex}
Given a 2-functor $F\cn C \to D$, we construct a bisimplicial set and
give two alternate descriptions of it in terms of nerves of certain
(op)lax comma objects. This bisimplicial set is also considered (with the vertical and horizontal directions reversed) by Bullejos and Cegarra in their proof of Quillen's theorem A for 2-categories (see \cite[\S 4.1]{BC03Geometry}).

\begin{defn}\label{defn:E0}
  Let $F\cn C \to D$ be a 2-functor. Define a bisimplicial set
  $\bsimp$ as follows.  The set $\bsimp_{p,q}$ is the set of triples
  $(\om, \de, \si)$ of strict 2-functors which make the diagram below
  commute.
  \begin{equation}\label{eq:om-si-de-orientals}
    \begin{tikzpicture}[x=20mm,y=12mm,baseline={(qp).base}]
      \draw[tikzob,mm]
      (0,1) node (q) {\sO(q)}
      (0,-1) node (p) {\sO(p)}
      (1,0) node (qp) {\sO(q+1+p)}
      (2,1) node (C) {C}
      (2,-1) node (D) {D};
      \path[tikzar,mm]
      (q) edge node {\om} (C)
      (p) edge[swap] node {\si} (D)
      (qp) edge node {\de} (D)
      (q) edge (qp)
      (p) edge (qp)
      (C) edge node {F} (D);
    \end{tikzpicture}
  \end{equation}
  The map $\sO(q) \to \sO(q+1+p)$ is induced by the inclusion $[q] \to
  [q+1+p]$ in $\De$ that sends each $i\in [q]$ to itself, while the
  map $\sO(p) \to \sO(q+1+p)$ is induced by the inclusion $[p] \to
  [q+1+p]$ that sends each $j\in [p]$ to $q+1+j$.  The faces and
  degeneracies are given below, where $H$ denotes the horizontal
  direction (fix $q$, vary $p$) and $V$ denotes the vertical direction
  (fix $p$, vary $q$).
  \begin{align*}
    d_i^H(\om, \de, \si) &= (\om, d_{q+1+i}\de, d_i \si)\\
    s_i^H(\om, \de, \si) &= (\om, s_{q+1+i}\de, s_i \si)\\
    d_i^V(\om, \de, \si) &= (d_i\om, d_{i}\de, \si) \\
    s_i^V(\om, \de, \si) &= (s_i\om, s_{i}\de, \si)
  \end{align*}
\end{defn}

Unpacking \cref{defn:E0}, the data of a strict 2-functor $\de$ as
above consists of the following.
\begin{itemize}
\item The values of $\de$ on the sub-oriental with vertices $0 <
  \cdots < q$ are given by $F \circ \om$.
\item The values of $\de$ on the sub-oriental with vertices $q+1 <
  \cdots < q + 1 + p$ are given by $\si$.
\item For $i\in [q]$ and $j\in [p]$, $\de_{(i,q+1+j)}$ is a 1-cell
  $F(\om_i) \to \si_j$.
\item For $i\in [q]$ and $j<j'\in [p]$, $\de_{(i,q+1+j,q+1+j')}$ is a
  2-cell as below.
  \[
  \begin{tikzpicture}[x=14mm,y=16mm]
    \draw[tikzob,mm] 
    (0,1) node (omi) {F(\om_i)} 
    (-1,0) node (sj) {\si_j} 
    (1,0) node (sj') {\si_{j'}}; 
    \path[tikzar,mm] 
    (sj) edge[swap] node {\si_{(j,j')}} (sj') 
    (omi) edge[swap] node {\de_{(i,q+1+j)}} (sj) 
    (omi) edge node {\de_{(i,q+1+j')}} (sj'); 
    \draw[tikzob,mm]
    (0,.4) node[rotate=215,2label={above,}] {\Rightarrow} 
    ;
  \end{tikzpicture}
  \]
\item For $i<i'\in [q]$ and $j\in [p]$, $\de_{(i,i',q+1+j)}$ is a
  2-cell as below.
  \[
  \begin{tikzpicture}[x=14mm,y=16mm]
    \draw[tikzob,mm] 
    (0,1) node (omi) {F(\om_i)} 
    (-1,0) node (sj) {F(\om_{i'})} 
    (1,0) node (sj') {\si_{j}}; 
    \path[tikzar,mm] 
    (sj) edge[swap] node {\de_{(i',q+1+j)}} (sj') 
    (omi) edge[swap] node {F(\om_{(i,i')})} (sj) 
    (omi) edge node {\de_{(i,q+1+j)}} (sj'); 
    \draw[tikzob,mm]
    (0,.4) node[rotate=215,2label={above,}] {\Rightarrow} 
    ;
  \end{tikzpicture}
  \]
\item These 2-cells fit into commuting tetrahedra, as in the equalities of pasting diagrams below.
  \begin{itemize}
  \item For $i\in [q]$ and $j<j'<j''\in [p]$,
    \[
    \begin{tikzpicture}[x=15mm,y=15mm,baseline=(-60:1)]
      \draw[tikzob,mm] 
      (0,0) node (a) {F(\om_i)}
      (-90:.55) node[rotate=-60,font=\small] {\Downarrow}
      (-120:1) node (b) {\si_j}
      (-60:1) node (c) {\si_{j'}}
      ++(90:.55) node[font=\small] {\Downarrow}
      (0:1) node (d) {\si_{j''}};
      \path[tikzar,mm] 
      (a) edge (b)
      (b) edge (c)
      (c) edge (d)
      (a) edge (d)
      (a) edge (c);    
    \end{tikzpicture}
    =
    \begin{tikzpicture}[x=15mm,y=15mm,baseline=(-60:1)]
      \draw[tikzob,mm] 
      (0,0) node (a) {F(\om_i)}
      (-60:1) node (b) {\si_j}
      +(90:.55) node[font=\small] {\Downarrow}
      ++(0:1) node (c) {\si_{j'}}
      (0:1) node (d) {\si_{j''}}
      ++(-90:.55) node[rotate=60,font=\small] {\Downarrow};
      \path[tikzar,mm] 
      (a) edge (b)
      (b) edge (c)
      (c) edge (d)
      (a) edge (d)
      (b) edge (d);
    \end{tikzpicture}
    \]
  \item For $i<i'\in [q]$ and $j<j'\in [p]$,
    \[
    \begin{tikzpicture}[x=15mm,y=15mm,baseline=(-60:1)]
      \draw[tikzob,mm] 
      (0,0) node (a) {F(\om_i)}
      (-90:.55) node[rotate=-60,font=\small] {\Downarrow}
      (-120:1) node (b) {F(\om_{i'})}
      (-60:1) node (c) {\si_j}
      ++(90:.55) node[font=\small] {\Downarrow}
      (0:1) node (d) {\si_{j'}};
      \path[tikzar,mm] 
      (a) edge (b)
      (b) edge (c)
      (c) edge (d)
      (a) edge (d)
      (a) edge (c);    
    \end{tikzpicture}
    =
    \begin{tikzpicture}[x=15mm,y=15mm,baseline=(-60:1)]
      \draw[tikzob,mm] 
      (0,0) node (a) {F(\om_i)}
      (-60:1) node (b) {F(\om_{i'})}
      +(90:.55) node[font=\small] {\Downarrow}
      ++(0:1) node (c) {\si_j}
      (0:1) node (d) {\si_{j'}}
      ++(-90:.55) node[rotate=60,font=\small] {\Downarrow};
      \path[tikzar,mm] 
      (a) edge (b)
      (b) edge (c)
      (c) edge (d)
      (a) edge (d)
      (b) edge (d);
    \end{tikzpicture}
    \]
  \item For $i<i'<i''\in [q]$ and $j\in [p]$,
    \[
    \begin{tikzpicture}[x=15mm,y=15mm,baseline=(-60:1)]
      \draw[tikzob,mm] 
      (0,0) node (a) {F(\om_i)}
      (-90:.55) node[rotate=-60,font=\small] {\Downarrow}
      (-120:1) node (b) {F(\om_{i'})}
      (-60:1) node (c) {F(\om_{i''})}
      ++(90:.55) node[font=\small] {\Downarrow}
      (0:1) node (d) {\si_{j}};
      \path[tikzar,mm] 
      (a) edge (b)
      (b) edge (c)
      (c) edge (d)
      (a) edge (d)
      (a) edge (c);    
    \end{tikzpicture}
    =
    \begin{tikzpicture}[x=15mm,y=15mm,baseline=(-60:1)]
      \draw[tikzob,mm] 
      (0,0) node (a) {F(\om_i)}
      (-60:1) node (b) {F(\om_{i'})}
      +(90:.55) node[font=\small] {\Downarrow}
      ++(0:1) node (c) {F(\om_{i''})}
      (0:1) node (d) {\si_{j}}
      ++(-90:.55) node[rotate=60,font=\small] {\Downarrow};
      \path[tikzar,mm] 
      (a) edge (b)
      (b) edge (c)
      (c) edge (d)
      (a) edge (d)
      (b) edge (d);
    \end{tikzpicture}
    \]
  \end{itemize}
\end{itemize}

The data of $\delta$ can be interpreted in two ways: as a lax
transformation whose components are oplax transformations, or as an
oplax transformation whose components are lax transformations, as the
following two results explain.  Recall from \cref{hat} we let
$\nh{k}\cn * \to K$ denote the constant functor at an object $k$ of a
2-category $K$.
\begin{lem}\label{lem:p-filtration}
  Fix $\si \in N_pD$ and consider the lax comma object $\ldar{F , \si}$ in the square below.
  \[
  \begin{tikzpicture}[x=26mm,y=16mm]
    \draw[tikzob,mm] 
    (0,1) node (Oq) {\ldar{F , \si}}
    (1,1) node (C) {C}
    (0,0) node (*) {*}
    (1,0) node (DOp) {\oplax(\sO(p), D)};
    \path[tikzar,mm] 
    (Oq) edge node {} (C)
    (*) edge[swap] node {\nh{\si}} (DOp)
    (Oq) edge node {} (*)
    (C) edge node {\De F} (DOp);
    \draw[tikzob,mm]
    (.5,.5) node[rotate=225,2label={above,}] {\Rightarrow}
    ++(3mm,-2mm) node {};
  \end{tikzpicture}
  \]  
  Then the set of pairs $(\om,\de)$ such that $(\om, \de,
  \si) \in \bsimp_{p,q}$ is in bijective correspondence with
  $N_q(\ldar{F , \si})$, the set of $q$-simplices in the nerve of
  $\ldar{F , \si}$. Under this bijection, $d_i^V(-, -, \si)$
  corresponds to $d_i$, and similarly for degeneracies.
\end{lem}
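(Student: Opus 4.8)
The plan is to deflate the statement to the universal property of the lax comma object. By \cref{prop:orientals-give-nopl-simplices}, a $q$-simplex of $N(\ldar{F,\si})$ is precisely a strict 2-functor $\Phi\cn \sO(q) \to \ldar{F,\si}$, and $\ldar{F,\si} = \laco{\De F, \nh{\si}}$ is the lax comma object of the cospan $C \xrightarrow{\De F} \oplax(\sO(p),D) \xleftarrow{\nh{\si}} *$. Applying \cref{prop:laco-univ} with $K = \sO(q)$, such a $\Phi$ is determined uniquely by a 2-functor $\om \cn \sO(q)\to C$ (the composite of $\Phi$ with the projection to $C$), the unique 2-functor $\sO(q)\to *$, and a lax natural transformation $\la\cn \De F\circ\om \Rightarrow \nh{\si}\circ(\,!\,)$ in $\oplax(\sO(p),D)$, whose target is the constant 2-functor at the object $\si$. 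First I would record that $\om$ is exactly the first coordinate of a completion $(\om,\de)$: commutativity of \eqref{eq:om-si-de-orientals} forces $\de$ to restrict to $F\circ\om$ on the sub-oriental $0<\cdots<q$ and to $\si$ on $q+1<\cdots<q+1+p$, so the only remaining freedom in $\de$ is the ``straddling'' data, which I claim is precisely $\la$.

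Second, I would build the dictionary by unpacking $\la$ against \cref{defn:E0}. Each component $\la_i$ is a 1-cell of $\oplax(\sO(p),D)$, i.e.\ an oplax transformation $\De F(\om_i) \Rightarrow \si$; its 1-cell components $(\la_i)_j \cn F(\om_i)\to\si_j$ I identify with $\de_{(i,q+1+j)}$, and its oplax structure 2-cells $(\la_i)_{jj'}$ with the cells $\de_{(i,q+1+j,q+1+j')}$. Each lax structure 2-cell $\la_{(i,i')}$, for $i<i'$ in $[q]$, is a modification whose $j$-component is a 2-cell $(\la_i)_j \Rightarrow (\la_{i'})_j\circ F(\om_{(i,i')})$, which I identify with $\de_{(i,i',q+1+j)}$. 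The cells $\de_{(i,i',i'')}$ with all indices in $[q]$ are then forced to equal $F(\om_{(i,i',i'')})$ and carry no extra data. I would finish this step by checking that the three families of commuting-tetrahedron axioms in \cref{defn:E0} are exactly the three axioms governing $\la$: the tetrahedra with a single $\om$-vertex $i$ and $j<j'<j''$ are the oplax-naturality axiom for the component $\la_i$ (\cref{defn:lax_trans}); those with $i<i'$ and $j<j'$ are the modification axiom for $\la_{(i,i')}$ (\cref{defn:modif}); and those with $i<i'<i''$ and a single $\si$-vertex $j$ are the composition (lax naturality) axiom for $\la$.

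Third, compatibility with the vertical faces and degeneracies is formal. Both the identification of \cref{prop:orientals-give-nopl-simplices} and the universal-property bijection of \cref{prop:laco-univ} are natural in the variable 2-category $\sO(q)$, so precomposing $\Phi$ with the coface (resp.\ codegeneracy) 2-functor corresponds to precomposing $\om$ and $\la$; since $d_i^V$ and $s_i^V$ act by that same coface/codegeneracy on the block $0,\ldots,q$ of $\de$, the bijection intertwines $d_i$ with $d_i^V$ and $s_i$ with $s_i^V$. I expect the genuine work to lie in the second step: verifying that the orientation conventions line up, in particular that the oplax (single-source, double-target) direction of the oriental 2-cells makes $(\la_i)_{jj'}$ an \emph{oplax} rather than lax structure cell, and that $\la_{(i,i')}$ points as $(\la_i)_j \Rightarrow (\la_{i'})_j\circ F(\om_{(i,i')})$, so that each tetrahedral equality is literally the corresponding pasting axiom and not its formal dual. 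Once the directions are pinned down, the remaining identifications are a direct comparison of pasting diagrams.
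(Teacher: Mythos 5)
Your proposal is correct and follows essentially the same route as the paper's proof: identify a $q$-simplex with a 2-functor $\sO(q)\to\ldar{F,\si}$, apply the universal property of \cref{prop:laco-univ} to decompose it as a pair $(\om,\la)$, and then match the components and structure 2-cells of $\la$ (the oplax transformations $\la_i$, the modifications $\la_{(i,i')}$, and the lax-naturality of $\la$) with the three families of straddling cells and commuting tetrahedra of $\de$ from \cref{defn:E0}. The directions you pin down, including $\la_{(i,i')}\cn \la_i \Rightarrow \la_{i'}\circ\De F(\om_{(i,i')})$, agree with the paper's.
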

\begin{proof}
  A $q$-simplex in $N(\ldar{F , \si})$ is given by a 2-functor
  \[
  \sO(q) \to \ldar{F , \si}.
  \]
  By \cref{prop:laco-univ}, such 2-functors are in bijection with pairs $(\omega,\la)$, where
  $\omega$ is a 2-functor and $\la$ is a lax transformation $\la$ as
  shown below.
  \[
  \begin{tikzpicture}[x=26mm,y=16mm]
    \draw[tikzob,mm] 
    (0,1) node (Oq) {\sO(q)}
    (1,1) node (C) {C}
    (0,0) node (*) {*}
    (1,0) node (DOp) {\oplax(\sO(p), D)};
    \path[tikzar,mm] 
    (Oq) edge node {\om} (C)
    (*) edge[swap] node {\nh{\si}} (DOp)
    (Oq) edge node {} (*)
    (C) edge node {\De F} (DOp);
    \draw[tikzob,mm]
    (.5,.5) node[rotate=225,2label={above,}] {\Rightarrow}
    ++(3mm,-2mm) node {\la};
  \end{tikzpicture}
  \]  
  The lax transformation $\la$ consists of the following data:
  \begin{itemize}
  \item For each $i \in \sO(q)$, an oplax transformation $\la_i\cn\De
    F(\om_i) \to \si$.  That is, for each $j \in \sO(p)$, a 1-cell
    \[
    \la_{i,j}\cn (\De F(\om_i))(j) = F(\om_i) \to \si(j) = \si_j
    \]
    and for $j < j'$, a 2-cell $\la_{i,(j,j')}$ as below.
    \[
    \begin{tikzpicture}[x=14mm,y=16mm]
      \draw[tikzob,mm] 
      (0,1) node (omi) {F(\om_i)} 
      (-1,0) node (sj) {\si_j} 
      (1,0) node (sj') {\si_{j'}}; 
      \path[tikzar,mm] 
      (sj) edge[swap] node {\si_{(j,j')}} (sj') 
      (omi) edge[swap] node {\la_{i,j}} (sj) 
      (omi) edge node {\la_{i,j'}} (sj'); 
      \draw[tikzob,mm]
      (-.2,.45) node[rotate=215,2label={above,}] {\Rightarrow} 
        node[below right] {\la_{i,(j,j')}};
    \end{tikzpicture}
    \]
    The condition on $\la_i$ being an oplax transformation means for
    all $j<j'<j''\in [p]$ the following equality of pasting diagrams
    holds.
    \[
    \begin{tikzpicture}[x=15mm,y=15mm,baseline=(-60:1)]
    \draw[tikzob,mm] 
    (0,0) node (a) {F(\om_i)}
    (-90:.55) node[rotate=-60,font=\small] {\Downarrow}
    (-120:1) node (b) {\si_j}
    (-60:1) node (c) {\si_{j'}}
    ++(90:.55) node[font=\small] {\Downarrow}
    (0:1) node (d) {\si_{j''}};
    \path[tikzar,mm] 
    (a) edge (b)
    (b) edge (c)
    (c) edge (d)
    (a) edge (d)
    (a) edge (c);    
    \end{tikzpicture}
    =
    \begin{tikzpicture}[x=15mm,y=15mm,baseline=(-60:1)]
    \draw[tikzob,mm] 
    (0,0) node (a) {F(\om_i)}
    (-60:1) node (b) {\si_j}
    +(90:.55) node[font=\small] {\Downarrow}
    ++(0:1) node (c) {\si_{j'}}
    (0:1) node (d) {\si_{j''}}
    ++(-90:.55) node[rotate=60,font=\small] {\Downarrow};
    \path[tikzar,mm] 
    (a) edge (b)
    (b) edge (c)
    (c) edge (d)
    (a) edge (d)
    (b) edge (d);
    \end{tikzpicture}
    \]
  \item For each $i < i'$ in $\sO(q)$, a modification $\la_{(i,i')} \cn \la_i
    \Rightarrow \la_{i'} \circ \De F(\om_{(i,i')})$.  That is, for each $j \in
    \sO(p)$, a 2-cell $\la_{(i,i'),j}$ as below.    
    \[
    \begin{tikzpicture}[x=14mm,y=16mm]
      \draw[tikzob,mm] 
      (0,1) node (omi) {F(\om_i)} 
      (-1,0) node (sj) {F(\om_{i'})} 
      (1,0) node (sj') {\si_{j}}; 
      \path[tikzar,mm] 
      (sj) edge[swap] node {\la_{(i',j)}} (sj') 
      (omi) edge[swap] node {F(\om_{(i,i')})} (sj) 
      (omi) edge node {\la_{i,j}} (sj'); 
      \draw[tikzob,mm]
      (-.2,.45) node[rotate=215,2label={above,}] {\Rightarrow} 
        node[below right] {\la_{(i,i'),j}};
    \end{tikzpicture}
    \]
  To say that $\la_{(i,i')}$ is a modification means that for
  all $j,j'\in [p]$, we have the equality of pastings as below.
  \[
  \begin{tikzpicture}[x=15mm,y=15mm,baseline=(-60:1)]
    \draw[tikzob,mm] 
    (0,0) node (a) {F(\om_i)}
    (-90:.55) node[rotate=-60,font=\small] {\Downarrow}
    (-120:1) node (b) {F(\om_{i'})}
    (-60:1) node (c) {\si_j}
    ++(90:.55) node[font=\small] {\Downarrow}
    (0:1) node (d) {\si_{j'}};
    \path[tikzar,mm] 
    (a) edge (b)
    (b) edge (c)
    (c) edge (d)
    (a) edge (d)
    (a) edge (c);    
  \end{tikzpicture}
  =
  \begin{tikzpicture}[x=15mm,y=15mm,baseline=(-60:1)]
    \draw[tikzob,mm] 
    (0,0) node (a) {F(\om_i)}
    (-60:1) node (b) {F(\om_{i'})}
    +(90:.55) node[font=\small] {\Downarrow}
    ++(0:1) node (c) {\si_j}
    (0:1) node (d) {\si_{j'}}
    ++(-90:.55) node[rotate=60,font=\small] {\Downarrow};
    \path[tikzar,mm] 
    (a) edge (b)
    (b) edge (c)
    (c) edge (d)
    (a) edge (d)
    (b) edge (d);
  \end{tikzpicture}
  \]
  \end{itemize}
  The condition on $\la$ being a lax transformation means there is a
  compatibility of the modifications $\la_{(i,i')}$ with composition
  in $\sO(q)$: for all $i<i'<i''\in[q]$ and $j\in [p]$, we
  have the following equality of pasting diagrams.
  \[
  \begin{tikzpicture}[x=15mm,y=15mm,baseline=(-60:1)]
    \draw[tikzob,mm] 
    (0,0) node (a) {F(\om_i)}
    (-90:.55) node[rotate=-60,font=\small] {\Downarrow}
    (-120:1) node (b) {F(\om_{i'})}
    (-60:1) node (c) {F(\om_{i''})}
    ++(90:.55) node[font=\small] {\Downarrow}
    (0:1) node (d) {\si_{j}};
    \path[tikzar,mm] 
    (a) edge (b)
    (b) edge (c)
    (c) edge (d)
    (a) edge (d)
    (a) edge (c);    
  \end{tikzpicture}
  =
  \begin{tikzpicture}[x=15mm,y=15mm,baseline=(-60:1)]
    \draw[tikzob,mm] 
    (0,0) node (a) {F(\om_i)}
    (-60:1) node (b) {F(\om_{i'})}
    +(90:.55) node[font=\small] {\Downarrow}
    ++(0:1) node (c) {F(\om_{i''})}
    (0:1) node (d) {\si_{j}}
    ++(-90:.55) node[rotate=60,font=\small] {\Downarrow};
    \path[tikzar,mm] 
    (a) edge (b)
    (b) edge (c)
    (c) edge (d)
    (a) edge (d)
    (b) edge (d);
  \end{tikzpicture}
  \]
  
  With this description, one can verify at once that defining 
  \begin{itemize}
  \item $\de_{(i,q+1+j)}=\la_{i,j}$, 
  \item $\de_{(i,q+1+j,q+1+j')}=\la_{i,(j,j')}$, and
  \item $\de_{(i,i',q+1+j)}=\la_{(i,i'),j}$
  \end{itemize}
  gives precisely the data of a 2-functor $\de\cn\sO(q + p + 1) \to D$
  in \cref{eq:om-si-de-orientals} as described above.  Verifying the
  formulas for faces and degeneracies is straightforward.
\end{proof}

\begin{lem}\label{lem:q-filtration}
  Fix $\om \in N_qC$ and consider the oplax comma object 
   in the square below.
  \[
  \begin{tikzpicture}[x=26mm,y=16mm]
    \draw[tikzob,mm] 
    (0,1) node (Op) {\oprdar{(F\circ\om) , D}}
    (1,1) node (*) {*}
    (1,0) node (DOq) {\lax(\sO(q), D)}
    (0,0) node (D) {D};
    \path[tikzar,mm] 
    (Op) edge node {} (*)
    (D) edge[swap] node {\De \Id_{D}} (DOq)
    (*) edge node {\nh{F\circ\om}} (DOq)
    (Op) edge[swap] node {} (D);
    \draw[tikzob,mm]
    (.5,.5) node[rotate=225,2label={above,}] {\Rightarrow}
    ++(3mm,-2mm) node {};
  \end{tikzpicture}
  \] 
  Then the set of pairs $(\de,\si)$ such that $(\om, \de,
  \si) \in \bsimp_{p,q}$ is in bijective correspondence with
  $N_p(\oprdar{(F\circ\om) , D})$, the set of $p$-simplices in the
  nerve of $\oprdar{(F\circ\om) , D}$. Under this bijection, $d_i^H(\om, -, -)$ corresponds to $d_i$, and similarly for
  degeneracies.
\end{lem}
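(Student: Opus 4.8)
The plan is to mirror the proof of \cref{lem:p-filtration}, interchanging the roles of the two orientals and invoking the universal property of the \emph{oplax} comma object (\cref{prop:oplaco-univ}) in place of \cref{prop:laco-univ}. By \cref{prop:orientals-give-nopl-simplices}, a $p$-simplex of $N(\oprdar{(F\circ\om),D})$ is a strict $2$-functor $\sO(p) \to \oprdar{(F\circ\om),D}$. Unwinding the notation, $\oprdar{(F\circ\om),D}$ is the oplax comma object for the cospan
\[
* \fto{\nh{F\circ\om}} \lax(\sO(q),D) \fot{\De\Id_{D}} D,
\]
so by \cref{prop:oplaco-univ} such a $2$-functor is precisely a pair $(\si,\la)$: a $2$-functor $\si\cn \sO(p) \to D$, obtained as the composite of the simplex with the projection to $D$, together with an oplax natural transformation $\la$ from the constant functor at $F\circ\om$ to $\De\Id_{D}\circ\si$, both viewed as functors $\sO(p) \to \lax(\sO(q),D)$. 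This is exactly the ``oplax transformation whose components are lax transformations'' interpretation of $\de$ promised above.

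Next I would unpack $\la$ and match it against the description of $\de$ following \cref{defn:E0}. Since $\la$ lands in $\lax(\sO(q),D)$, each component $\la_j$ is itself a lax transformation $F\circ\om \TO \De\Id_{D}(\si_j)$, i.e.\ a family of $1$-cells $(\la_j)_i\cn F(\om_i) \to \si_j$ together with lax-structure $2$-cells $(\la_j)_{(i,i')}$ for $i<i'$; setting $\de_{(i,q+1+j)} = (\la_j)_i$ and $\de_{(i,i',q+1+j)} = (\la_j)_{(i,i')}$, the lax axiom for $\la_j$ becomes the tetrahedron equality indexed by $i<i'<i''$ and $j$. Each structure $2$-cell $\la_{(j,j')}$, for $j<j'$, is a modification, hence a family of $2$-cells which I identify with $\de_{(i,q+1+j,q+1+j')}$, and its modification axiom reproduces the tetrahedron indexed by $i<i'$ and $j<j'$. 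Finally the oplax naturality of $\la$, namely compatibility with composition in $\sO(p)$, supplies the last tetrahedron, indexed by $i$ and $j<j'<j''$. Reading these identifications in both directions exhibits the desired bijection with $\{(\de,\si)\mid(\om,\de,\si)\in\bsimp_{p,q}\}$.

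It then remains to check that the bijection is simplicial in the $p$-direction. The coface and codegeneracy maps $\sO(p\pm1) \to \sO(p)$ act on $N_p(\oprdar{(F\circ\om),D})$ by precomposition, and under the identification above this acts on $\si$ as the usual $d_i,s_i$ and on $\de$ by the shifted operators $d_{q+1+i},s_{q+1+i}$ that fix the sub-oriental on $0<\cdots<q$; this is exactly $d_i^H,s_i^H$ as defined in \cref{defn:E0}.

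The delicate point, as in \cref{lem:p-filtration}, is the bookkeeping of the middle step: because this slicing presents $\de$ dually to \cref{lem:p-filtration} --- as an oplax transformation of lax transformations rather than a lax transformation of oplax ones --- one must track the source/target conventions of the oriental $2$-cells (\cref{rmk:reading-tetrahedra}) and confirm that the three families of tetrahedra distribute correctly among the lax axiom for each $\la_j$, the modification axiom for each $\la_{(j,j')}$, and the oplax-naturality axiom for $\la$. Once those conventions are fixed, every identification is forced and the verification is purely mechanical.
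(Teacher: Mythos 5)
The paper states this lemma without proof, leaving it as the formal dual of \cref{lem:p-filtration}, and your proposal carries out exactly that intended dualization: invoking \cref{prop:oplaco-univ} for the cospan $* \to \lax(\sO(q),D) \leftarrow D$ and matching the components $(\la_j)_i$, the lax-structure cells $(\la_j)_{(i,i')}$, the modification components $(\la_{(j,j')})_i$, and the three families of tetrahedra to the data of $\de$. The identifications and the distribution of axioms (lax axiom for each $\la_j$, modification axiom for each $\la_{(j,j')}$, oplax naturality of $\la$) are all correct, so this is a faithful reconstruction of the omitted proof.
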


\subsection{Analysis of the homology spectral sequence}\label{sec:ss_from_E0}

We now describe the spectral sequence in homology associated
to a 2-functor $F\cn C \to D$, arising from the simplicial set
$\bsimp$ of \cref{defn:E0}.  In \cref{thm:ss} we show that if $F$ is
an opfibration then the $E^2$ page is given by the homology of $D$
with local coefficients in the homology of the fibers of $F$.  We
apply this machinery in \cref{sec:application} to prove
\cref{thm:main}. We begin with a discussion of local coefficients.

\begin{defn}\label{defn:cat-of-elts}
  Let $N$ be a simplicial set.  Its \emph{category of elements} $\sint
  N$ has objects given by pairs $([p], x)$ where $x \in N_p$.  A
  morphism $([p], x) \to ([q], y)$ consists of $\phi\cn [q] \to [p]$
  in $\De$ such that $\phi^*(x) = y$.
\end{defn}
\begin{defn}\label{defn:loc-coeff-on-sset}
  A \emph{local coefficient system} on a simplicial set $N$ is a
  functor
  \[
  F\cn \sint N \to \Ab.
  \]
  We say that $F$ is \emph{morphism-inverting} if it sends all morphisms
  to isomorphisms.  A \emph{local coefficient system} on a topological
  space $X$ is a functor
  \[
  \Pi_1 X \to \Ab
  \]
  where $\Pi_1 X$ is the fundamental groupoid of $X$.
\end{defn}



\begin{rmk}
  A local coefficient system on a simplicial set $N$ does not
  necessarily induce a local coefficient system on its geometric
  realization $|N|$.  However, since the fundamental groupoid of $|N|$
  is equivalent to the free groupoid on $\sint N$, local coefficient
  systems on $|N|$ correspond to morphism-inverting local coefficient
  systems on $N$.  See \cite[Section VI.4]{GJ10Simplicial}.  This
  implies the following result.
\end{rmk}

\begin{prop}\label{lem:morphism-inverting-coeff}
  If $F\cn \sint N \to \Ab$ is morphism-inverting, then $H_*(N \coef
  F)$ is isomorphic to $H_*(|N| \coef F)$, the homology of the space
  $|N|$ with local coefficients given by the functor $\Pi_1(|N|) \to \Ab$
  induced by $F$.
\end{prop}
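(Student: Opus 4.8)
The plan is to realize both sides as the homology of explicit chain complexes and to produce a quasi-isomorphism between them. For the left-hand side I would use the standard definition: $H_*(N \coef F)$ is the homology of the Moore complex of the simplicial abelian group $A_\bullet$ with $A_n = \bigoplus_{\sigma \in N_n} F([n], \sigma)$, where the face operator $d_i$ acts on the summand indexed by $\sigma$ through the map that $F$ assigns to the coface $\delta^i \cn [n-1] \to [n]$, viewed as the morphism $([n],\sigma) \to ([n-1], d_i\sigma)$ of $\sint N$ (\cref{defn:cat-of-elts}), and $s_i$ uses the codegeneracies. Functoriality of $F$ together with the (co)simplicial identities guarantees that $A_\bullet$ is genuinely a simplicial abelian group, so its Moore complex is a well-defined twisted chain complex. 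For the right-hand side I would use the twisted singular complex: $H_*(|N| \coef \mathcal{F})$ is the homology of $C^{\mathrm{sing}}_*(|N| \coef \mathcal{F})$, where $\mathcal{F} \cn \Pi_1|N| \to \Ab$ is the induced local system (\cref{defn:loc-coeff-on-sset}) and the boundary is twisted by parallel transport.

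Then I would compare the two complexes. Since $F$ is morphism-inverting it inverts every generating coface and codegeneracy, hence factors through the localization of $\sint N$ at all its morphisms; by the equivalence between $\Pi_1|N|$ and the free groupoid $\mathrm{Gpd}(\sint N)$ on $\sint N$ recalled in the remark preceding this proposition, that factorization is precisely the passage from $F$ to $\mathcal{F}$. In particular, for each $n$-simplex $\sigma$ the morphism $([n],\sigma) \to ([0], \sigma(0))$ coming from the inclusion of the leading vertex is sent by $F$ to an isomorphism $F([n],\sigma) \iso \mathcal{F}(\sigma(0))$, and $\sigma(0) \in N_0$ is exactly the image of the leading vertex of $|\sigma|\cn |\Delta^n| \to |N|$. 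These isomorphisms assemble into a chain map $C_*(N \coef F) \to C^{\mathrm{sing}}_*(|N| \coef \mathcal{F})$ sending the summand for $\sigma$ along the singular simplex $|\sigma|$. I would show this map is a quasi-isomorphism by filtering both sides along the skeleta of $N$ (on the target via the subspaces $|N^{(k)}|$) and arguing on associated graded cell by cell, reducing on each nondegenerate simplex to the classical untwisted comparison over the contractible space $|\Delta^n|$, where the local system is canonically trivial. Degenerate simplices are handled by passing to normalized complexes, which is legitimate precisely because $A_\bullet$ is a simplicial abelian group, and the non-connected case splits as a direct sum over the components of $\mathrm{Gpd}(\sint N)$, a splitting respected by both theories.

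The step I expect to be the main obstacle is the compatibility of the boundary operators under the comparison map: checking that parallel transport of $\mathcal{F}$ along the edges of $|N|$ agrees with the functoriality of $F$ along the generating morphisms of $\sint N$. This is exactly where the morphism-inverting hypothesis is indispensable, since without it $\mathcal{F}$ is not a local system on $|N|$ and the right-hand side is undefined, whereas with it the transport isomorphisms are forced to coincide with the structure isomorphisms of $F$ by the correspondence of the preceding remark. Once this bookkeeping is secured, the skeletal induction is routine and the untwisted comparison supplies the base case.
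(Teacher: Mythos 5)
Your proposal is correct, but it is worth saying up front that the paper does not actually prove this proposition: it is stated as a consequence of the preceding remark and delegated to the literature ([GJ10Simplicial, Section VI.4]; the analogous 1-categorical statement is attributed to [Whi78Elements, Theorem VI.4.8] and [Qui1973Higher]). So you are supplying a proof where the authors supply a citation. The argument you give is essentially the classical one from Whitehead: trivialize each summand $F([n],\sigma)$ onto $\mathcal{F}(\sigma(0))$ via the leading-vertex morphism $([n],\sigma)\to([0],\sigma(0))$ of $\sint N$ (this is exactly where morphism-inverting is used, both to make $\mathcal{F}$ exist and to make these maps isomorphisms), check that conjugating the face maps by these isomorphisms turns $F(\delta^i)$ into the identity for $i\ge 1$ and into parallel transport along the edge $\sigma|_{01}$ for $i=0$, and then compare with singular chains by skeletal induction. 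The two points your sketch compresses are the ones that carry the real content: first, the ``routine'' skeletal induction on the singular side requires the long exact sequence and excision for singular homology with local coefficients together with the computation $H_*\bigl(|\Delta^n|,|\partial\Delta^n| \coef \mathcal{F}\bigr)\iso \mathcal{F}(\sigma(0))$ concentrated in degree $n$ --- this is precisely the cellular-equals-singular theorem for local coefficients, so one should either prove it or cite it as the paper does; second, the identification of the conjugated $d_0$ with transport along $\sigma|_{01}$ uses that transport in $\mathcal{F}$ is by definition $F(\mathrm{vertex}\ 1)\circ F(\mathrm{vertex}\ 0)^{-1}$ under the equivalence of $\Pi_1|N|$ with the groupoid freely generated by $\sint N$, which is exactly the correspondence invoked in the remark preceding the proposition. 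With those two points made explicit your argument is complete, and it has the advantage over the paper's citation of making visible where the morphism-inverting hypothesis enters.
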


\begin{defn}\label{defn:homology-for-2-cat}
  Let $D$ be a 2-category, and $F\cn \sint (ND) \to \Ab$ a local
  coefficient system on $ND$.  Then $H_*(D \coef F)$ denotes the homology of the
  simplicial set $ND$ with local coefficients in $F$.  If $F$ is omitted, we implicitly take the constant coefficient system at the integers $\bZ$.
\end{defn}

\begin{rmk}
  The standard definition of a local coefficient system on a
  1-category $C$ is a functor $F \cn C \to \Ab$.  The theory of
  covering spaces and covering groupoids implies that there is a 1-1
  correspondence between morphism-inverting local coefficient systems
  on $C$ in this sense and local coefficient systems on the
  classifying space $BC$ in the sense of
  \cref{defn:loc-coeff-on-sset}.  This is discussed in
  \cite[Definition IV.3.5.1]{Wei13KBook} and \cite[Section
    1]{Qui1973Higher}.  Proofs that the homology of the corresponding
  chain complexes are naturally isomorphic appear in \cite[Section
    1]{Qui1973Higher} and \cite[Theorem VI.4.8]{Whi78Elements}.
  Therefore, in the case that $D$ is a 1-category, our
  \cref{defn:homology-for-2-cat} agrees with the standard definition
  when the coefficients are morphism-inverting.
\end{rmk}

\begin{prop}\label{prop:loc-coeff}
  Let $F\cn C \to D$ be a 2-functor and $q\geq 0$. The assignment 
  \[
  \si \mapsto H_q(\ldar{F , \si}\,)
  \]
  defines a local coefficient system $\sH_q(\ldar{F , \;-})$ on the
  simplicial set $ND$.
\end{prop}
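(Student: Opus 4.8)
The plan is to promote the object assignment to an honest functor $\ldar{F,-}\cn \sint(ND) \to \iicat$ and then postcompose with the nerve $N\cn \iicat \to \sset$ and simplicial homology $H_q\cn \sset \to \Ab$, both of which are functors; since $H_q(\ldar{F,\si})$ is by definition (see \cref{defn:homology-for-2-cat}) the $q$-th homology of the nerve of $\ldar{F,\si}$, the composite is then the desired local coefficient system $\sH_q(\ldar{F,-})$. On objects I set $([p],\si)\mapsto \ldar{F,\si} = \laco{\De F, \nh{\si}}$, where $\si\cn\sO(p)\to D$ is the $2$-functor corresponding to the $p$-simplex (\cref{prop:orientals-give-nopl-simplices}) and $\ldar{F,\si}$ is the lax comma object appearing in \cref{lem:p-filtration}.

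The content is the definition on morphisms. A morphism $([p],\si)\to([p'],\si')$ of $\sint(ND)$ is a simplicial operator $\phi\cn[p']\to[p]$ with $\phi^*\si=\si'$ (\cref{defn:cat-of-elts}). Using functoriality of the orientals (\cref{prop:orientals-cosimplicial}) I form $\sO(\phi)\cn\sO(p')\to\sO(p)$, and precomposition along it gives a $2$-functor
\[
\Phi\cn \oplax(\sO(p),D)\to \oplax(\sO(p'),D), \qquad T\mapsto T\circ\sO(\phi).
\]
Because diagonals restrict to diagonals and constant functors to constant functors (see \cref{Delta,hat}), $\Phi$ commutes strictly with the two legs of the cospan: $\Phi\circ\De F=\De F$ and $\Phi\circ\nh{\si}=\nh{\si\circ\sO(\phi)}=\nh{\si'}$. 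Whiskering the universal lax transformation $\pi$ of $\ldar{F,\si}$ by $\Phi$ therefore yields a lax transformation $\Phi\pi$ filling the cospan $C\xrightarrow{\De F}\oplax(\sO(p'),D)\xleftarrow{\nh{\si'}}*$ over the projections of $\ldar{F,\si}$, and the universal property of $\laco{\De F,\nh{\si'}}=\ldar{F,\si'}$ (\cref{prop:laco-univ}) produces a unique $2$-functor
\[
\phi_*\cn \ldar{F,\si}\to\ldar{F,\si'}
\]
with the projections as its first and last components and $\Phi\pi$ as its laxity. This $\phi_*$ is the value of $\ldar{F,-}$ on the given morphism, and it is covariant in $\sint(ND)$ as required.

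Finally I verify functoriality. Since $\sO(-)$ is a functor and precomposition is contravariantly functorial, the $2$-functors $\Phi$ compose strictly and carry identities to identities; the same then holds for the whiskered transformations $\Phi\pi$. The \emph{uniqueness} clause in the universal property of \cref{prop:laco-univ} upgrades this to strict functoriality of $\phi\mapsto\phi_*$, so that $(\phi\psi)_*=\psi_*\phi_*$ and identity operators induce identity $2$-functors. Thus $\ldar{F,-}$ is a functor $\sint(ND)\to\iicat$, and postcomposing with $N$ and $H_q$ gives $\sH_q(\ldar{F,-})$. I expect the main obstacle to be purely bookkeeping: confirming that precomposition by $\sO(\phi)$ is genuinely a $2$-functor that strictly respects the diagonal $\De F$ and the constant functor $\nh{\si}$, and that the comparison $2$-functors $\phi_*$ compose on the nose rather than merely up to isomorphism. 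Both points are dispatched by that same uniqueness clause, so no homotopical input is needed beyond the functoriality of $N$ and $H_q$ already invoked. As a consistency check one can compare with \cref{lem:p-filtration}: under its bijection the vertical faces $d_i^V$ and degeneracies $s_i^V$ correspond to $d_i,s_i$ on $N(\ldar{F,\si})$, matching the maps $\phi_*$ induced by the coface and codegeneracy operators.
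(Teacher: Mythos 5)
Your proposal is correct and follows essentially the same route as the paper: the paper's proof also pastes the universal lax transformation of $\ldar{F,\si}$ with the precomposition 2-functor $\phi^*\cn \oplax(\sO(p),D)\to\oplax(\sO(p'),D)$ and invokes the universal property of $\ldar{F,\phi^*\si}$ to obtain the induced 2-functor, asserting functoriality directly. You simply make explicit the two routine points the paper leaves implicit, namely that $\phi^*$ strictly preserves the legs $\De F$ and $\nh{\si}$ of the cospan and that the uniqueness clause of \cref{prop:laco-univ} forces strict functoriality of $\phi\mapsto\phi_*$.
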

\begin{proof}
  Since $\ldar{F , \si}$ is a lax comma object, the assignment $\si
  \mapsto H_q (\ldar{F , \si})$ is a functor on the category of
  elements of $ND$ (\cref{defn:cat-of-elts}).  Indeed, a morphism
  $([p],\si) \to ([p'],\si')$ in $\sint ND$, given by $\phi\cn [p']
  \to [p]$ in $\De$ with $\si'=\phi^* \si$, yields a lax
  transformation
  \[
  \begin{tikzpicture}[x=26mm,y=20mm]
    \draw[tikzob,mm] 
    (0,1) node (Fsi) {\ldar{F ,  \si}}
    (1,1) node (C) {C}
    (0,0) node (*) {*}
    (1,0) node (DOp) {\oplax(\sO(p), D)}
    (2.5,1) node (C') {C}
    (2.5,0) node (DOp') {\oplax(\sO(p'), D)};
    \path[tikzar,mm] 
    (Fsi) edge node {} (C)
    (*) edge[swap] node {\nh{\si}} (DOp)
    (Fsi) edge node {} (*)
    (C) edge node {} (DOp)
    (C) edge[/tikz/commutative diagrams/equal] node {} (C')
    (DOp) edge[swap] node {\phi^*} (DOp')
    (C') edge node {} (DOp');
    \draw[tikzob,mm]
    (.5,.5) node[rotate=225,2label={above,}] {\Rightarrow};
  \end{tikzpicture}
  \]
  and hence, by the universal property of $\ldar{F,(\phi^*\si)}$, we
  get a 2-functor 
  \[
  \phi^*\cn\ldar{F , \si} \to \ldar{F , (\phi^*
    \si)}.
    \]
  This assignment is clearly functorial.
\end{proof}

Given a bisimplicial set we have a double complex constructed by
taking free abelian groups on simplices, and alternating sums of face
maps.  We then have two spectral sequences (vertical and horizontal)
associated to that double complex. Let $E^0$ denote the double complex
associated to the bisimplicial set $\bsimp$ of \cref{defn:E0}.

\begin{thm}\label{thm:E0-ss}
  The vertical spectral sequence associated to the double complex $E^0$ has
  \begin{equation}\label{eq:ss}
  E^2_{p,q} = H_p\big( D \coef \sH_q(\ldar{F ,\,-}) \big)
  \end{equation}
  and abuts to $H_{p+q} (C)$.
\end{thm}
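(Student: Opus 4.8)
The plan is to play the two spectral sequences of the double complex $E^0$ off against each other: the vertical one produces the stated $E^2$ page, while the horizontal one computes the abutment.

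First I would fix the horizontal degree $p$ and compute homology in the vertical direction. By \cref{lem:p-filtration}, for each $\si \in N_p D$ the triples of $\bsimp_{p,q}$ with final coordinate $\si$ form exactly $N_q(\ldar{F,\si})$, and the vertical faces $d_i^V$ restrict to the simplicial faces of this nerve. The $p$-th column therefore splits as the direct sum over $\si \in N_p D$ of the chain complexes of $N(\ldar{F,\si})$, so that
\[
E^1_{p,q} = \bigoplus_{\si \in N_p D} H_q(\ldar{F,\si}) = \bigoplus_{\si \in N_p D} \sH_q(\ldar{F,\si}).
\]
This is precisely the group of $p$-chains of $ND$ with coefficients in the local system $\sH_q(\ldar{F,\,-})$ of \cref{prop:loc-coeff}.

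Next I would identify the induced horizontal differential on $E^1$ with the differential of this local-coefficient complex. The face formula $d_i^H(\om,\de,\si) = (\om, d_{q+1+i}\de, d_i\si)$ shows that $d_i^H$ carries the $\si$-summand to the $d_i\si$-summand; what must be checked is that the resulting map $H_q(\ldar{F,\si}) \to H_q(\ldar{F,d_i\si})$ agrees with the base-change map induced by the 2-functor $\phi^*$ of \cref{prop:loc-coeff} for $\phi$ the $i$-th coface. This compatibility is the naturality already encoded in the bijection of \cref{lem:p-filtration}, and granting it we obtain
\[
E^2_{p,q} = H_p\big( D \coef \sH_q(\ldar{F,\,-}) \big).
\]
Matching the horizontal bisimplicial faces with the functoriality of the lax comma construction is the bookkeeping I expect to demand the most care, and it is the technical heart of the argument.

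Finally I would compute the total homology using the other spectral sequence, taking horizontal homology first. By \cref{lem:q-filtration}, fixing $\om \in N_q C$ identifies the corresponding row with the nerve $N(\oprdar{(F\circ\om),D})$, and $d_i^H$ restricts to the simplicial faces of that nerve. Since $\sO(q)$ has an oplax terminal object (\cref{rmk:oplax_init_term_orientals}), \cref{cor:lp-terminal} shows that each oplax comma object $\oprdar{(F\circ\om),D}$ is contractible. Hence the first page of this spectral sequence is concentrated in the column $p=0$, where it equals $\bZ[N_q C]$, and the induced vertical differential is the nerve differential of $C$. Its homology is therefore $H_q(C)$, the spectral sequence collapses, and the total complex of $E^0$ has homology $H_*(C)$. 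Since both spectral sequences converge to this total homology, the vertical one abuts to $H_{p+q}(C)$, completing the proof.
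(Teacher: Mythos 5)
Your proposal is correct and follows essentially the same route as the paper: vertical homology via \cref{lem:p-filtration} to get $E^1_{p,q} = \bigoplus_{\si \in N_pD} H_q(\ldar{F,\si})$ and hence the stated $E^2$ page, and horizontal homology via \cref{lem:q-filtration} together with \cref{cor:lp-terminal} to collapse the other spectral sequence and identify the abutment with $H_*(C)$. The only difference is that you make explicit the identification of the $E^1$ horizontal differential with the differential of the local-coefficient complex from \cref{prop:loc-coeff}, a point the paper's proof leaves implicit.
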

\begin{proof}
  First consider homology of $E^0$ in the horizontal direction (fix
  $q$, vary $p$). By \cref{lem:q-filtration}, taking homology and
  summing over $\om$ yields
  \[
  \bigoplus_{\om \in N_q(C)} H_p\big(\oprdar{(F \circ \om),D}\big).
  \]
  By \cref{cor:lp-terminal}, for fixed $\om$, $H_p\big(\oprdar{(F
    \circ \om),D}\big) = 0$ for $p>0$ and is isomorphic to $\bZ$ when
  $p=0$.  Therefore, the horizontal spectral sequence of the double
  complex collapses, and the homology of the total complex
  $\mathrm{Tot}\; E^0$ is given by
  \[
  H_q\big( \mathrm{Tot}\; E^0 \big) \iso H_q(C).
  \]
  
  Next consider homology of $E^0$ in the vertical direction (fix $p$,
  vary $q$).  By \cref{lem:p-filtration}, taking homology and summing
  over $\si$ yields
  \begin{align}
    E^1_{p,q} & = \bigoplus_{\si \in N_p(D)} H_q( \ldar{F , \si}\, ).
  \end{align}
  Taking homology of $E^1_{p,q}$ now yields
  \begin{align}
    E^2_{p,q} & = H_p\big( D \coef \sH_q(\ldar{F ,  \;-}) \big).
  \end{align}
  Hence we have a spectral sequence
  \begin{equation}
    E^2_{p,q} = H_p\big( D \coef \sH_q(\ldar{F ,  \;-}) \big)
    \Rightarrow H_{p+q}\big( \mathrm{Tot}\; E^0 \big) \iso H_{p+q} (C).
  \end{equation}
\end{proof}
If $F$ is an opfibration, the following result gives a further
simplification of the $E^2$ page.
\begin{lem}\label{cor:two_isos}
  If $F \cn C \to D$ is an opfibration and all 2-cells of $D$ are
  invertible, then
  \[
  H_q(\ldar{F, \si}) \iso H_q\Big(F^{-1}\big(\si(0)\big)\Big)
  \]
  for all $\si \in N_pD$.
\end{lem}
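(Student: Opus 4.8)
The plan is to chain together three facts already established: the reduction of a lax comma object over an oplax initial object (\cref{lem:lp-initial}), the comparison of the lax comma object with the strict pullback for an opfibration (\cref{prop:fiber_vs_htpyfiber}), and the observation that $\sO(p)$ has oplax initial object $0$ (\cref{rmk:oplax_init_term_orientals}). None of these individually requires new work, so the proof should be a clean concatenation of two homotopy equivalences with one isomorphism of $2$-categories.

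First I would recall from \cref{lem:p-filtration} that $\ldar{F, \si}$ is, by definition, the lax comma object $\laco{\De F, \nh{\si}}$ for the cospan
\[
C \xrightarrow{\De F} \oplax(\sO(p), D) \xleftarrow{\nh{\si}} *,
\]
where $\si\cn \sO(p) \to D$ is the strict $2$-functor corresponding to the $p$-simplex (\cref{prop:orientals-give-nopl-simplices}). Since $\sO(p)$ carries the oplax initial object $0$ by \cref{rmk:oplax_init_term_orientals}, \cref{lem:lp-initial}, applied with $E = \sO(p)$, $\inob = 0$, and $G = \si$, yields a homotopy equivalence on classifying spaces
\[
\ldar{F, \si} = \laco{\De F, \nh{\si}} \hty \laco{F, \nh{\si(0)}},
\]
where $\nh{\si(0)}\cn * \to D$ is the constant $2$-functor at the object $\si(0) = \si_0$ that $\si$ assigns to the oplax initial object $0$.

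Next I would apply \cref{prop:fiber_vs_htpyfiber} to the opfibration $F\cn C \to D$ and the $2$-functor $\nh{\si(0)}\cn * \to D$. The hypothesis that all $2$-cells of $D$ are invertible is exactly what is assumed, so the inclusion $\pb\big(F, \nh{\si(0)}\big) \to \laco{F, \nh{\si(0)}}$ is a homotopy equivalence on classifying spaces. It then remains to identify the strict pullback with the strict fiber: by \cref{defn:pb_notation} a $k$-cell of $\pb(F, \nh{\si(0)})$ is a pair $(w_1, w_2)$ with $w_1$ a $k$-cell of $C$, $w_2$ a $k$-cell of $*$, and $F(w_1) = \nh{\si(0)}(w_2)$. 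Because $*$ has a unique cell in each dimension, $w_2$ is forced and $\nh{\si(0)}(w_2)$ is the identity $k$-cell on $\si(0)$; hence the condition is precisely that $w_1$ lie in the strict fiber $F^{-1}(\si(0))$, giving an isomorphism of $2$-categories $\pb(F, \nh{\si(0)}) \iso F^{-1}(\si(0))$.

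Combining, we obtain
\[
F^{-1}(\si(0)) \iso \pb\big(F, \nh{\si(0)}\big) \hty \laco{F, \nh{\si(0)}} \hty \ldar{F, \si},
\]
and applying $H_q$ gives the claimed isomorphism. I do not expect a substantive obstacle here, since all the content is packaged in the cited results; the only steps requiring genuine care are the hypothesis checks for \cref{lem:lp-initial,prop:fiber_vs_htpyfiber}. In particular one should confirm that the oplax initial object $0$ of $\sO(p)$ is sent by $\si$ to exactly the object $\si(0)$ appearing in the statement, which is immediate from \cref{prop:orientals-give-nopl-simplices}, and that the strict pullback against $\nh{\si(0)}$ really is the strict fiber rather than a weaker variant, which is the bookkeeping carried out above.
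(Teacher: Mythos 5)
Your proof is correct and follows the same route as the paper: apply \cref{lem:lp-initial} using the oplax initial object $0$ of the oriental to reduce $\ldar{F,\si}$ to $\laco{F,\nh{\si(0)}}$, then apply \cref{prop:fiber_vs_htpyfiber} to replace the lax comma object by the strict pullback, which is the strict fiber. You simply make explicit two points the paper leaves implicit (the identification $\pb(F,\nh{\si(0)})\iso F^{-1}(\si(0))$ and the hypothesis checks), and you correctly use $\sO(p)$ where the paper's proof text has a small slip writing $\sO(q)$.
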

\begin{proof}
  Since $\sO(q)$ has oplax initial object 0, \cref{lem:lp-initial} gives
  the first isomorphism below.
  Applying \cref{prop:fiber_vs_htpyfiber} gives the second.
  \[
  H_q(\ldar{F , \si}) \iso H_q\big(\laco{F , \si(0)}\big) \iso
  H_q\Big(F^{-1}\big(\si(0)\big)\Big).\qedhere
  \]
\end{proof}

\begin{notn}\label{notn:Finv-loc-coeff}
  If $F\cn C \to D$ is an opfibration and all 2-cells of $D$ are
  invertible, we denote by $\sH_q F^\inv$ the local coefficient system
  given on objects by
  \[\si \mapsto H_q\Big(F^{-1}\big(\si(0)\big)\Big)\]
  and given on morphisms $\phi\cn ([p],\si) \to ([p'],\phi^*\si)$ by 
  composing the morphisms
  \[
  H_q\Big(\ldar{F , \si}\Big) \to H_q\Big(\ldar{F ,  \phi^*\si}\Big)
  \]
  of \cref{prop:loc-coeff} with the isomorphisms of
  \cref{cor:two_isos}.  Explicitly, this means that $\sH_q F^\inv
  (\phi)$ is given by taking nerves and applying $H_q$ to the
  composite below, using the morphisms $i$ and $H$ of
  \cref{prop:fiber_vs_htpyfiber}, and $d$ and $e$ from
  \cref{lem:lp-initial}
  \[
  \begin{array}{ccccccc}
  F^\inv\big(\si(0)\big) &
  \fto{i} &\laco{F,\si(0)}&
  \fto{d} &\ldar{F,\si}&
  \fto{\phi^*} &\ldar{F,\phi^*\si}\\
 &&& \fto{e} &\laco{F,\phi^*\si(0)}&
  \fto{H} &F^\inv\big(\phi^*\si(0)\big).
  \end{array}
  \]
\end{notn}

\begin{rmk}\label{rmk:Finv-base-change}
  Tracing through the constructions, one can check that the composite
  \[
  \laco{F,\si(0)}
  \fto{d} \ldar{F,\si}
  \fto{\phi^*} \ldar{F,\phi^*\si}
  \fto{e} \laco{F,\phi^*\si(0)} 
  \]
  is precisely the base change induced by the 1-cell
  $\si_{(0,\phi(0))}\cn \si(0) \to \phi^*\si(0)$.
\end{rmk}

Combining \cref{thm:E0-ss} with \cref{cor:two_isos} we have the following.
\begin{thm}\label{thm:ss}
  If $F \cn C \to D$ is an opfibration and all 2-cells of $D$ are
  invertible, then there is a spectral sequence
  \begin{equation}
    E^2_{p,q} = H_p\big( D \coef \sH_q F^\inv \big)
  \end{equation}
  that abuts to $H_{p+q} (C)$.
\end{thm}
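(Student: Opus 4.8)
The plan is to deduce this immediately from \cref{thm:E0-ss} by rewriting the local coefficient system that appears there. Indeed, \cref{thm:E0-ss} already produces a spectral sequence
\[
E^2_{p,q} = H_p\big( D \coef \sH_q(\ldar{F ,\,-}) \big) \Rightarrow H_{p+q}(C),
\]
with no hypotheses on $F$ beyond being a 2-functor. So under the additional hypotheses here---that $F$ is an opfibration and all 2-cells of $D$ are invertible---the only remaining work is to identify the coefficient system $\sH_q(\ldar{F ,\,-})$ with $\sH_q F^\inv$.

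For this I would invoke \cref{cor:two_isos}, which supplies for each object $([p],\si)$ of $\sint ND$ an isomorphism $H_q(\ldar{F,\si}) \iso H_q\big(F^\inv(\si(0))\big)$. The essential point is that these objectwise isomorphisms are the components of an isomorphism of functors $\sint ND \to \Ab$, that is, an isomorphism of local coefficient systems. This is precisely what \cref{notn:Finv-loc-coeff} arranges: the action of $\sH_q F^\inv$ on a morphism of $\sint ND$ is defined there to be the conjugate, by the isomorphisms of \cref{cor:two_isos}, of the corresponding morphism action of $\sH_q(\ldar{F ,\,-})$ constructed in \cref{prop:loc-coeff}. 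With that definition, naturality holds by construction, so $\sH_q(\ldar{F ,\,-}) \iso \sH_q F^\inv$ as local coefficient systems on $ND$.

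Finally, since the homology of a simplicial set with local coefficients depends only on the isomorphism type of the coefficient system, this natural isomorphism yields $H_p\big(D \coef \sH_q(\ldar{F ,\,-})\big) \iso H_p\big(D \coef \sH_q F^\inv\big)$ for all $p$ and $q$. Substituting this into the $E^2$ page of the spectral sequence of \cref{thm:E0-ss} gives the asserted spectral sequence, which still abuts to $H_{p+q}(C)$.

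The step requiring the most care is the naturality assertion---checking that the objectwise isomorphisms of \cref{cor:two_isos} really do commute with the base-change maps indexing $\sint ND$. I expect this to be not so much an obstacle as a matter of bookkeeping that has already been front-loaded into \cref{notn:Finv-loc-coeff}; \cref{rmk:Finv-base-change} pins down the relevant base-change 2-functor explicitly, which is what makes the identification transparent.
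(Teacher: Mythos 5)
Your proposal is correct and matches the paper's argument, which is stated in one line as ``Combining \cref{thm:E0-ss} with \cref{cor:two_isos} we have the following'': the spectral sequence comes from \cref{thm:E0-ss}, and the coefficient system $\sH_q(\ldar{F,\,-})$ is identified with $\sH_q F^\inv$ via the objectwise isomorphisms of \cref{cor:two_isos}, with naturality holding by construction since \cref{notn:Finv-loc-coeff} defines the morphism action of $\sH_q F^\inv$ exactly as the conjugate of that of \cref{prop:loc-coeff}. You have simply spelled out the bookkeeping the paper leaves implicit.
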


\section{Construction of \texorpdfstring{$S^\inv S$}{S-1S}}\label{sec:SinvS}

In this section we construct a 2-categorical model for group
completion that generalizes Quillen's $S^\inv S$ \cite{Gra1976Higher}.  Here $S$ is assumed
to be both a 2-groupoid and a permutative Gray monoid with faithful
translations; these terms are defined in \cref{sec:pgms}.  In
\cref{sec:SinvS-construction} we define $S^\inv X$ when $S$ acts on a
2-category $X$.  In \cref{sec:application} we prove, under the
additional hypothesis that the 2-cells of $X$ are invertible, that the
map
\[
\rho\cn S^\inv X \to S^\inv *
\]
 is an opfibration
(\cref{prop:pi_opfib}).   We then use the homology spectral sequence
for $\rho$ (\cref{thm:ss}) to show that it induces localization by $\pi_0S$ on homology (\cref{thm:main_technical}).  In the case $X =
S$, this proves \cref{thm:main}~\eqref{main:ii}: $S^\inv S$ models the
group-completion of $S$.

\subsection{Background on permutative Gray monoids}\label{sec:pgms}

In this section we recall from
\cite{GJO2017KTheory,GJO2019dimensional} the notion of permutative
Gray monoid, a semi-strict type of symmetric monoidal bicategory. We
refer the reader to \cite{Gra74Formal,Gurski13Coherence} for further background on
the Gray tensor product and to \cite{GJO2017KTheory,GJOS2017Postnikov}
for further background on permutative Gray monoids.  Just as
permutative categories provide a strict model for symmetric monoidal
categories, permutative Gray monoids provide a strict model for
symmetric monoidal bicategories.  We sketch the relevant definitions
and then state the strictification result.

\begin{defn}\label{defn:graytensor}
  Let $X$ and $Y$ be 2-categories.  The \emph{Gray tensor product} of $X$
  and $Y$, written $X \otimes Y$, is the 2-category given by
  \begin{itemize}
  \item 0-cells consisting of pairs $x \otimes y$ with $x$ an object
    of $X$ and $y$ an object of $Y$;
  \item 1-cells generated under composition by two kinds of basic 1-cells denoted $f \otimes 1\cn x \otimes y \to x' \otimes y$ for $f\cn x \to x'$
    in $X$ and $1 \otimes g\cn x \otimes y \to x \otimes y'$ for
    $g\cn y \to y'$ in $Y$; and
  \item 2-cells generated by basic 2-cells of the form
    $\al \otimes 1$ for 2-cells $\al$ in $X$; $1 \otimes \de$ for
    2-cells $\de$ in $Y$; and new interchange isomorphism 2-cells
    $\Si_{f,g}\cn (f \otimes 1)(1 \otimes g) \cong (1 \otimes g)(f
    \otimes 1)$.
  \end{itemize}
  These cells satisfy axioms related to composition, naturality and
  bilinearity; for a complete list, see
  \cite[Section 3.1]{Gurski13Coherence} or
  \cite[Definition 3.16]{GJO2017KTheory}.
\end{defn}
\begin{rmk}
  The definition given above is sometimes called the \emph{pseudo}
  Gray tensor product.  The definition given by Gray
  \cite{Gra74Formal} does not require that the 2-cells $\Si_{f,g}$ be
  isomorphisms.  Our definition follows that of \cite{GPS95Coherence},
  where the pseudo version, defined here, is shown to be a monoidal
  product for $\IICat$ and is essential to the coherence theory of
  tricategories.  See \cite{GPS95Coherence,Gurski13Coherence} for
  further details.
\end{rmk}

\begin{thm}[{\cite[Section 4.8]{GPS95Coherence}, \cite[Theorem 3.16]{Gurski13Coherence}}]
  The assignment
  \[(X,Y)\mapsto X \otimes Y
  \]
  extends to a functor
  of categories
  \[
  \IICat \times \IICat \rtarr \IICat
  \]
  which defines a symmetric monoidal structure on $\IICat$. The unit for
  this monoidal structure is the terminal 2-category.
\end{thm}

\begin{defn}
  \label{defn:gray-monoid}
  A \emph{Gray monoid} is a monoid object in $(\IICat,\otimes)$.  This
  consists of a 2-category $S$, a 2-functor
  \[
  \oplus \cn S \otimes S \to S,
  \]
  and an object $e$ of $S$ satisfying associativity and unit
  axioms.
\end{defn}

\begin{defn}\label{defn:pgm}
  A \textit{permutative Gray monoid} $S$ consists of a Gray monoid
  $(S, \oplus, e)$ together with a 2-natural isomorphism,
  \[
    \begin{tikzpicture}[x=1mm,y=1mm]
    \draw[tikzob,mm] 
    (0,0) node (00) {S \otimes S}
    (25,0) node (10) {S \otimes S}
    (12.5,-10) node (01) {S}
    ;
    \path[tikzar,mm] 
    (00) edge node {\tau} (10)
    (10) edge node {\oplus} (01)
    (00) edge[swap] node {\oplus} (01)
    ;
    \draw[tikzob,mm]
    (12.5,-4) node {\Anglearrow{40} \beta}
    ;
  \end{tikzpicture}
  \]
  where $\tau \cn S \otimes S \to S \otimes S$ is the symmetry
  isomorphism in $\IICat$ for the Gray tensor product, such that the
  following axioms hold.
  \begin{itemize}
  \item The following pasting diagram is equal to the identity
    2-natural transformation for the 2-functor $\oplus$.
    \[
    \begin{tikzpicture}[x=1mm,y=1mm]
    \draw[tikzob,mm] 
    (0,0) node (00) {S \otimes S}
    (25,0) node (10) {S \otimes S}
    (50,0) node (20) {S \otimes S}
    (25,-15) node (11) {S}
    ;
    \path[tikzar,mm] 
    (00) edge node{\tau} (10)
    (10) edge node{\tau} (20)
    (00) edge[swap] node{\oplus} (11)
    (10) edge[swap] node{\oplus} (11)
    (20) edge node{\oplus} (11)
    (00) edge[bend left] node{\id} (20)
    ;
    \draw[tikzob,mm] 
    (14.5,-4) node {\scriptstyle \Anglearrow{40} \beta}
    (35.5,-4) node {\scriptstyle \Rightarrow \beta}    
    ;
    \end{tikzpicture}
    \]

  \item The following equality of pasting diagrams holds where we have
    abbreviated the tensor product to concatenation when labeling 1-
    or 2-cells.
    \[
    \begin{tikzpicture}[x=.95mm,y=1mm]
    \draw[tikzob,mm] 
    (3,-10) node (00) {S^{\otimes 3}}
    (18,0) node (10) {S^{\otimes 3}}
    (36,0) node (20) {S^{\otimes 3}}
    (51,-10) node (30) {S^{\otimes 2}}
    (27,-15) node (11) {S^{\otimes 2}}
    (18,-30) node (12) {S^{\otimes 2}}
    (36,-30) node (33) {S}
    (69,-10) node (40) {S^{\otimes 3}}
    (84,0) node (50) {S^{\otimes 3}}
    (102,0) node (60) {S^{\otimes 3}}
    (117,-10) node (70) {S^{\otimes 2}}
    (84,-30) node (52) {S^{\otimes 2}}
    (102,-30) node (73) {S}
    (102,-17) node (63) {S^{\otimes 2}}
    ;
    \path[tikzar,mm] 
    (00) edge node{\scriptstyle \tau \id} (10)
    (40) edge node{\scriptstyle \tau \id} (50)
    (10) edge node{\scriptstyle \id \tau } (20)
    (50) edge node{\scriptstyle \id \tau } (60)
    (20) edge node{\scriptstyle \oplus \id} (30)
    (60) edge node{\scriptstyle \oplus \id} (70)
    (30) edge node{\scriptstyle \oplus} (33)
    (70) edge node{\scriptstyle \oplus} (73)
    (00) edge[swap] node{\scriptstyle \oplus \id} (12)
    (40) edge[swap] node{\scriptstyle \oplus \id} (52)
    (12) edge[swap] node{\scriptstyle \oplus} (33)
    (52) edge[swap] node{\scriptstyle \oplus} (73)
    (00) edge[swap] node{\scriptstyle \id \oplus} (11)
    (11) edge node{\scriptstyle \tau} (30)
    (11) edge[swap] node{\scriptstyle \oplus} (33)
    (50) edge node{\scriptstyle \oplus \id} (52)
    (50) edge[swap] node{\scriptstyle \id \oplus} (63)
    (60) edge node{\scriptstyle \id \oplus} (63)
    (63) edge node{\scriptstyle \oplus} (73)
    ;
    \draw[tikzob,mm] 
    (27,-7.5) node {=}
    (19,-21) node {=}
    (108,-12) node {=}
    (93,-20) node {=}
    (59,-15) node {=}
    (37,-19) node {\scriptstyle \Anglearrow{40} \beta}
    (78.4,-12.5) node {\scriptstyle \Anglearrow{40} \beta \id}
    (96,-5) node {\scriptstyle \Anglearrow{40} \id \beta}
    ;
    \end{tikzpicture}
    \]
  \end{itemize}
\end{defn}

\begin{rmk}\label{rmk:sigmas-with-betas}
  If $S$ is a permutative Gray monoid, we abuse notation and let $\Sigma_{f,g}$ denote the image under $\oplus$ of the Gray structure 2-cell $\Sigma_{f,g}$.  The hexagon axiom and the 2-naturality of $\beta$ together imply that
  $\Sigma_{f,g}$ is an identity 2-cell in $S$ whenever $f$ or $g$ is a component of $\beta$
  \cite[Proposition 3.42]{GJO2017KTheory}.
\end{rmk}

\begin{thm}[{\cite[Theorem 2.97]{SP2011Classification}, \cite[Theorem 3.14]{GJO2017KTheory}}]
  \label{thm:sm2cat-equiv-to-pgm}
  Every symmetric monoidal bicategory is equivalent, via a symmetric
  monoidal pseudofunctor, to a permutative Gray monoid.
\end{thm}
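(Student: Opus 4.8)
The plan is to establish the result in stages, reducing the full weak structure to the semistrict notion of \cref{defn:pgm} by a combination of classical bicategorical strictification and $2$-monad-theoretic coherence. Throughout I would work with the Gray tensor product of \cref{defn:graytensor}, since by \cref{defn:gray-monoid,defn:pgm} a permutative Gray monoid is exactly a monoid object for $\otimes$ equipped with the symmetry datum $\beta$ satisfying the stated axioms. The whole difficulty is to rigidify a weak symmetric monoidal bicategory into this form while producing a symmetric monoidal pseudofunctor back to the original.

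First I would strictify the underlying bicategory. Every bicategory $B$ is biequivalent to a $2$-category: one may take the local strictification $\mathrm{st}(B)$, whose $1$-cells are formal composites of those of $B$, together with a biequivalence $\mathrm{st}(B) \simeq B$; equivalently, one takes the full image of the Yoneda embedding into the $2$-category of pseudofunctors, pseudonatural transformations, and modifications $B^{\op} \to \Cat$. I would then transport the symmetric monoidal structure of $B$ across this adjoint biequivalence, using the standard fact that monoidal-type structure may be transported along such an equivalence, so that $\mathrm{st}(B)$ becomes a symmetric monoidal $2$-category and the biequivalence upgrades to a symmetric monoidal pseudofunctor. This reduces the problem to strictifying a symmetric monoidal structure whose underlying bicategory is already a $2$-category.

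Second I would realise permutative Gray monoids as the strict algebras of a suitable Gray-monad $T$ on $(\IICat, \otimes)$, whose pseudo-algebras, together with pseudomorphisms, are exactly the semistrict symmetric monoidal $2$-categories. The engine is the coherence theorem for flexible $2$-monads (Blackwell--Kelly--Power, Lack): the forgetful functor from strict $T$-algebras and strict maps to pseudo-$T$-algebras and pseudomaps has a left adjoint whose unit components are internal equivalences, so every pseudo-$T$-algebra is equivalent, via a pseudomorphism, to a strict one. To feed the weak symmetric monoidal $2$-category from the first stage into this machine one must first replace its fully weak coherence data by the semistrict data presented by $T$, trivialising the associativity and unit constraints of $\oplus$ and recording the interchange cells $\Si_{f,g}$ of \cref{defn:graytensor} together with the symmetry $\beta$. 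This comparison is precisely the content of the strictification of \cite{SP2011Classification} to quasistrict symmetric monoidal $2$-categories, together with the identification of those with permutative Gray monoids in \cite{GJO2017KTheory}; assembling the two stages with these two comparisons yields the symmetric monoidal pseudofunctor of the statement.

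The main obstacle is the second stage, and specifically the enriched coherence bookkeeping. Because the monoidal structure on $\IICat$ is the Gray tensor product rather than the cartesian product, the relevant $2$-monad theory must be carried out in the Gray-enriched setting, and one must track the interchange $2$-cells $\Si_{f,g}$ through every step, verifying that the flexibility hypotheses of the coherence theorem hold for $T$ and that the syllepsis-free symmetry axioms of \cref{defn:pgm} are exactly what survive the rigidification. This is where the detailed arguments of \cite{SP2011Classification, GJO2017KTheory} do the real work, and I would defer to them for the verification.
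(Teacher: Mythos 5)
The paper offers no proof of this statement: it is quoted as an external result, with the substance residing entirely in \cite[Thm 2.97]{SP2011Classification} and \cite[Thm 3.14]{GJO2017KTheory}, which is precisely where your outline also defers the real work. Your two-stage sketch (strictify, then pass to a quasistrict symmetric monoidal 2-category and identify those with permutative Gray monoids) is the correct shape of the argument in those references---modulo the caveat that Schommer-Pries's actual proof is a hands-on computadic strictification rather than a literal application of Blackwell--Kelly--Power/Lack coherence for a Gray-enriched 2-monad---so your proposal and the paper agree in treating this theorem as imported rather than proved.
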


\begin{defn}
  \label{defn:strict-functor-gray-mon}
  Let $(S,\oplus,e,\beta)$ and $(S',\oplus',e',\beta')$ be permutative
  Gray monoids.  A \textit{strict functor} is a 2-functor $F:S \to S'$
  of the underlying 2-categories satisfying the following conditions.
  \begin{itemize}
  \item $F(e) = e'$, so that $F$ strictly preserves the unit
    object.
  \item The diagram
    \[
    \begin{tikzpicture}[x=1mm,y=1mm]
    \draw[tikzob,mm] 
    (0,0) node (00) {S \otimes S}
    (30,0) node (10) {S' \otimes S'}
    (0,-15) node (01) {S}
    (30,-15) node (11) {S'}
    ;
    \path[tikzar,mm] 
    (00) edge node{F \otimes F} (10)
    (10) edge node{\oplus'} (11)
    (00) edge[swap] node{\oplus} (01)
    (01) edge[swap] node{F} (11)    
    ;
    \end{tikzpicture}
    \]
    commutes, so that $F$ strictly preserves the sum.
  \item The equation
    \[
    \beta' * (F \otimes F) = F * \beta
    \]
    holds, so that $F$ strictly preserves the symmetry.  This equation
    is equivalent to requiring that
    \[
    \beta'_{Fx,Fy} = F(\beta_{x,y})
    \]
    as 1-cells from $Fx \oplus' Fy = F(x \oplus y)$ to
    $Fy \oplus' Fx = F(y \oplus x)$.
  \end{itemize}
\end{defn}

\begin{notn}\label{notn:PGM}
  The category of permutative Gray monoids, $\PGM$, has objects permutative
  Gray monoids and morphisms the strict functors between them.
\end{notn}

\begin{defn}\label{defn:pgm_action}
Let $(S, \oplus, e, \beta)$ be a permutative Gray monoid, and let $X$
be a 2-category. An \emph{action} of $S$ on $X$ consists of a
2-functor $\mu \cn S \otimes X \to X$ such that
\begin{enumerate}
\item $\mu(e, -)$ is the identity 2-functor on $X$,
\item $\mu \circ (\oplus \otimes 1_X) = \mu \circ (1_S \otimes \mu)$
  as 2-functors $S \otimes S \otimes X \to X$, and
\item\label{item-3} for any 1-cell $f$ in $X$, and $\beta\cn s \oplus t \to t \oplus s$ in $S$, the image under $\mu$ of the 2-cell $\Si_{\be, f} \in S \otimes X$ is an identity 2-cell in $X$.
\end{enumerate}
\end{defn}

\begin{rmk}
  Let $\mu\colon S \otimes X \to X$ be a 2-functor. Given objects in $s,t$ in $S$ and $x$ in $X$, there is a 1-cell $\mu(\be_{s,t}\otimes 1) \cn \mu((s\oplus t)\otimes x) \to \mu((t\oplus s)\otimes x)$.  As $x$ varies over the objects of $X$, these are the components of a pseudonatural isomorphism $\mu((s\oplus t)\otimes -) \Rightarrow \mu((t\oplus s)\otimes -)$.  
  Condition (\eqref{item-3}) of \cref{defn:pgm_action} is equivalent to requiring that these components form a 2-natural transformation.
\end{rmk}

\begin{notn}\label{notn:no_dots}
  For an action $\mu \cn S \otimes X \to X$, we often write the image
  $\mu(s\otimes x)$ as merely $sx$, and similarly for higher cells.  We also
  use juxtaposition for the action of $S$ on itself via
  $\oplus$.  Generalizing \cref{rmk:sigmas-with-betas}, we denote by $\Sigma$ the image under $\mu$ of any 2-cell $\Sigma$ in $S \otimes X$.
\end{notn}

We now turn to the notion of invertible cells in a permutative Gray monoid.

\begin{defn}\label{defn:invertible2}
  Let $(S, \oplus, e)$ be a Gray monoid.
  \begin{enumerate}
  \item A 2-cell of $S$ is invertible if it has an inverse in the
    usual sense.
  \item A 1-cell $f \cn x \to y$ is invertible if there exists a
    1-cell $g \cn y \to x$ together with invertible 2-cells $g\circ f
    \cong \id_{x}$, $f\circ g \cong \id_{y}$. In other words, $f$ is
    invertible if it is an internal equivalence in $S$.
  \item An object $x$ of $S$ is invertible if there exists another
    object $y$ together with invertible 1-cells $x \oplus y \to e$, $y
    \oplus x \to e$.
  \end{enumerate}
  A 2-category satisfying the first and second condition for all 1-
  and 2-cells is a \emph{2-groupoid}, and a Gray monoid (or more
  generally, a monoidal bicategory) satisfying the third condition for
  all objects is called \emph{grouplike}.
\end{defn}

\begin{defn}\label{defn:picard}
  A \emph{Picard 2-category} is a grouplike symmetric monoidal
  2-groupoid.  We say that a Picard 2-category is \emph{strict} if it
  is a permutative Gray monoid.
\end{defn}

We also have a notion of group-completion for monoid-like structures
on spaces.

\begin{defn}\label{defn:gp_comp}
  Let $X$ be a homotopy commutative, homotopy associative $H$-space. A
  \emph{group completion} of $X$ is an $H$-space $Y$, together with an
  $H$-space map $f\cn X\to Y$, such that 
  \begin{itemize}
  \item $\pi_0(f)$ exhibits $\pi_0(Y)$ as the group completion
  of the abelian monoid $\pi_0(X)$; and 
  \item for all commutative rings $k$, the induced map on homology
    \[ H_*f \cn H_*(X;k) \to H_*(Y;k)\] exhibits $H_*(Y;k)$ as the localization
    $\pi_0(X)^\inv H_*(X;k)$.
   \end{itemize}
\end{defn}
\begin{defn}\label{defn:htpy_gp_comp}
  We say that a functor of symmetric monoidal categories or
  2-categories is a \emph{homotopy group completion} if it is a group
  completion, as in \cref{defn:gp_comp}, on classifying spaces.
\end{defn}

\subsection{Definition of \texorpdfstring{$S^\inv X$}{S-1X}}\label{sec:SinvS-construction}

Let $S$ be a permutative Gray monoid, and suppose that $S$ acts on a
2-category $X$, with action denoted by juxtaposition.  There is an
induced diagonal action of $S$ on $S \times X$, and we denote
this with a lower dot as in $s.(a,x) = (sa,sx)$ for $s \in S$ and
$(a,x) \in S \times X$.

\begin{defn}\label{defn:SinvX}
We describe the 0-, 1-, 2-cells of $S^\inv X$ as follows.
\begin{itemize}
\item An object of $S^\inv X$ consists of a pair $(a,x) \in S
  \times X$.
\item A 1-cell $(a,x) \to (b,y)$ is given by a triple $(s, (\al,
  \phi))$ where $s \in S$ and $(\al, \phi)$ is a morphism in $S
  \times X$ from $s.(a,x)$ to $(b,y)$.
\item A 2-cell from $(s,(\al, \phi))$ to $(s', (\al', \phi'))$ is given
  by an equivalence class $\<p, (A,F)\>$ where $p\cn s \to s'$ is a
  1-cell in $S$ and $(A,F)$ is a 2-cell in $S \times X$ as
  below.
  \[
  \begin{tikzpicture}[x=40mm,y=25mm]
    \draw[tikzob,mm] 
    (0,1) node (sax) {s.(a,x)}
    (1,1) node (tax) {s'.(a,x)}
    (.5,0) node (by) {(b,y)};
    \path[tikzar,mm] 
    (sax) edge node {p.(1,1)} (tax)
    (sax) edge[swap] node {(\al,\phi)} (by)
    (tax) edge node {(\al',\phi')} (by);
    \draw[tikzob,mm]
    (.45,.7) node[rotate=35,2label={below,(A,F)}] {\Rightarrow}
    ;
  \end{tikzpicture}
  \]
  Two equivalence classes $\<p, (A,F)\>$ and $\<q, (B,G)\>$ are equal
  if there is a 2-cell isomorphism $\Theta\cn p \iso q$ in $S$ such
  that we have the following equality of pastings in $S \times X$;
  the unmarked 2-cells are given by $(A,F)$ and $(B,G)$, respectively.
  \begin{equation}\label{eq:icc}
  \begin{tikzpicture}[x=38mm,y=25mm,baseline={(0,16mm)}]
    \draw[tikzob,mm] 
    (0,1) node (sax) {s.(a,x)}
    (1,1) node (tax) {s'.(a,x)}
    (.5,0) node (by) {(b,y)};
    \path[tikzar,mm] 
    (sax) edge[bend left=25] node {q.(1,1)} (tax) 
    (sax) edge[bend right=25,swap] node {p.(1,1)} (tax) 
    (sax) edge[swap] node {(\al,\phi)} (by)
    (tax) edge node {(\al',\phi')} (by);
    \draw[tikzob,mm]
    (.5,.45) node[rotate=35,2label={above,}] {\Rightarrow}
    (.4,1) node[rotate=90,2label={below,\Theta.(1,1)}] {\Rightarrow}
    ;
  \end{tikzpicture}
  \ =\ 
  \begin{tikzpicture}[x=35mm,y=25mm,baseline={(0,16mm)}]
    \draw[tikzob,mm] 
    (0,1) node (sax) {s.(a,x)}
    (1,1) node (tax) {s'.(a,x)}
    (.5,0) node (by) {(b,y)};
    \path[tikzar,mm] 
    (sax) edge[bend left=25] node {q.1} (tax) 
    (sax) edge[swap] node {(\al,\phi)} (by)
    (tax) edge node {(\al',\phi')} (by);
    \draw[tikzob,mm]
    (.5,.65) node[rotate=35,2label={above,}] {\Rightarrow};
  \end{tikzpicture}
  \end{equation}
\end{itemize}
\end{defn}

\begin{prop}\label{prop:sinvx_is_2cat}
  The data of $S^\inv X$ given in \cref{defn:SinvX} forms a
  2-category.  This construction is functorial with respect to strict functors of permutative Gray monoids on the first variable and maps that preserve the action strictly on the second variable. There is a 2-functor $i \cn X \to S^\inv X$ given by
  $i(x) = (e,x)$ on objects; it is natural on both variables. 
\end{prop}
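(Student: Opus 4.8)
The plan is to verify the three assertions in turn, with the bulk of the work in showing that the data of \cref{defn:SinvX} assembles into a strict 2-category. First I would record the composition and identity data. The identity 1-cell at $(a,x)$ is $(e,(1_a,1_x))$, using that $\mu(e,-)$ is the identity 2-functor. Given composable 1-cells $(s,(\al,\phi))\cn(a,x)\to(b,y)$ and $(t,(\ga,\psi))\cn(b,y)\to(c,z)$, I would define their composite to be
\[
(t\oplus s,\ (\ga,\psi)\circ t.(\al,\phi)),
\]
where $t.(\al,\phi)$ is the action of $t$ on the morphism $(\al,\phi)$ of $S\times X$ and its source $t.(s.(a,x))$ is identified with $(t\oplus s).(a,x)$ via strict associativity of the action. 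Because $S$ is a Gray \emph{monoid} its multiplication $\oplus$ is strictly associative and unital, and the action axioms of \cref{defn:pgm_action} are strict equalities; hence composition of 1-cells is strictly associative and unital, with no coherence cells required.

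For 2-cells I would define the identity 2-cell on $(s,(\al,\phi))$ by $\<1_s,(1,1)\>$ and the vertical composite of $\<p,(A,F)\>$ and $\<p',(A',F')\>$ by composing $p'\circ p$ in $S$ and pasting the filler 2-cells. Vertical composition is then strictly associative and unital on representatives, and one checks directly that it descends to equivalence classes: replacing $p,p'$ by $q,q'$ along invertible $\Theta,\Theta'$ replaces $p'\circ p$ by $q'\circ q$ along the invertible 2-cell $\Theta'*\Theta$, and the pasting condition \eqref{eq:icc} for the composite follows by pasting those for the two factors.

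The subtle point, and the step I expect to be the main obstacle, is horizontal composition together with the interchange law. The horizontal composite of $\<p_1,(A_1,F_1)\>$ and $\<p_2,(A_2,F_2)\>$ must have first component $p_2\oplus p_1$, and here $\oplus\cn S\otimes S\to S$ is a 2-functor on the \emph{Gray} tensor product, so the two evident ways of building $p_2\oplus p_1$ out of the basic 1-cells differ by the structure isomorphism $\Si_{p_2,p_1}$ of \cref{defn:graytensor}; likewise, acting by cells of $S$ on the fillers in $S\times X$ introduces the analogous isomorphisms of the action. The key observation is that all of these discrepancies are \emph{invertible} 2-cells of $S$, and the equivalence relation of \cref{defn:SinvX}---quotienting $\<p,(A,F)\>$ by precisely such invertible $\Theta\cn p\iso q$---is exactly what is needed to make horizontal composition well defined on equivalence classes and to force the middle-four interchange to hold on the nose. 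Concretely, I would exhibit the required reparametrization as a pasting of the relevant $\Si$-cells and verify that whiskering one horizontal-then-vertical composite by it yields the vertical-then-horizontal composite. This is the calculation that makes $S^\inv X$ a strict 2-category rather than a bicategory.

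Finally, functoriality and the inclusion are formal once the structure is in place. For a strict functor $G\cn S\to S'$ and an equivariant 2-functor on $X$, applying $G$ and the map on $X$ to every component sends cells to cells, and, because $G$ strictly preserves $\oplus$ and the structure isomorphisms $\Si$ (\cref{defn:strict-functor-gray-mon}), carries the equivalence relation to the equivalence relation; compatibility with the composites above is then immediate, yielding the two asserted variances. The inclusion sends $x\mapsto(e,x)$, a 1-cell $f$ to $(e,(1_e,f))$, and a 2-cell $\theta$ to $\<1_e,(1_{1_e},\theta)\>$; since every component lies over the strict unit $e$, all of the structure isomorphisms encountered above are identities, so $i$ is a strict 2-functor, manifestly natural in both variables.
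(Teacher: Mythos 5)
Your proposal is correct and follows essentially the same route as the paper: the same composition formula for 1-cells (strict by the Gray monoid and action axioms), vertical composition by pasting, and horizontal composition of 2-cells defined only up to the Gray interchange cells $\Si$, with the equivalence relation on 2-cells absorbing the discrepancy---exactly what the paper's two explicit representatives $\<r,(C,H)\>=\<r',(C',H')\>$ encode. The paper declares the remaining verifications routine, so your added discussion of well-definedness and interchange is elaboration of the same argument rather than a different one.
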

\begin{proof}
  We define composition of 1-cells
  \[
  (s, (\al, \phi))\cn (a,x) \to (b,y)
  \mathrm{\quad and \quad}
  (t, (\ga, \psi))\cn (b,y) \to (c,z)
  \]
  by the formula
  \[
  (t, (\ga, \psi)) \circ (s, (\al, \phi)) = (t s, (\ga \circ t\al, \psi \circ t\phi)).
  \]
  Vertical composition of 2-cells is given by pasting their defining triangles
  in $S \times X$. Horizontal composition of 2-cells 
  \[
  \begin{tikzpicture}[x=45mm,y=25mm]
   \draw[tikzob,mm] 
   (0,0) node (ax) {(a,x)}
   (1,0) node (by) {(b,y)}
   (2,0) node (cz) {(c,z)};
   \path[tikzar,mm] 
   (ax) edge[bend left=25] node {(s,(\al,\phi))} (by) 
   (ax) edge[bend right=25,swap] node {(s',(\al',\phi'))} (by) 
   (by) edge[bend left=25] node {(t,(\ga,\psi))} (cz) 
   (by) edge[bend right=25,swap] node {(t',(\ga',\psi'))} (cz) ;
   \draw[tikzob,mm]
   (.4,0) node[rotate=270,2label={above,\:\langle p, (A,F)\rangle}] {\Rightarrow}
   (1.4,0) node[rotate=270,2label={above,\:\langle q, (B,G)\rangle}] {\Rightarrow}
   ;
  \end{tikzpicture}
  \]
  is given by $\<r,(C,H)\> = \<r',(C',H')\>$ where
  \[
  \begin{array}{l}
    r = 
    qs'\circ tp
    \\ 
    C = 
      (\ga'* \Si_{q,\al'}* tpa) \circ (B * tA)
    \\
    H = 
     (\psi'* \Si_{q,\phi'}* tpx) \circ (G * tF)
    \\ 
    r' = 
    t'p\circ qs \\
     C' = 
    (\ga' * t'A*qsa)\circ (\ga'*\Si_{q,\al})\circ (B*t\al) \\
    H' = 
   (\psi' * t'F*qsx) \circ (\psi'*\Si_{q,\phi})\circ (G*t\phi).
  \end{array}
  \]
   Verification of the axioms is routine. The statements about functoriality and naturality of the constructions follow directly from the definitions.
\end{proof}

The special case where $X = *$, the terminal 2-category with the
unique action, will be useful in later sections.
\begin{lem}\label{lem:s-inv-pt-contract}
  If the 2-cells of $S$ are invertible, then the topological space
  $|NS^\inv *|$ is contractible.
\end{lem}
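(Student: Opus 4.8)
The plan is to construct a lax natural transformation from the constant $2$-functor at the unit object $e$ to the identity $2$-functor of $S^\inv *$, and then conclude via \cref{thm:trans-htpy}. Specializing \cref{defn:SinvX} to $X = *$, an object of $S^\inv *$ is an object $a$ of $S$; a $1$-cell $a \to b$ is a pair $(s, \al)$ with $s \in \ob S$ and $\al \cn s \oplus a \to b$ a $1$-cell of $S$; and a $2$-cell $(s, \al) \TO (s', \al')$ is an equivalence class $\<p, A\>$ with $p \cn s \to s'$ a $1$-cell of $S$ and $A$ a $2$-cell comparing $\al' \circ (p \oplus 1_a)$ with $\al$. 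Write $\mathrm{const}_e$ for the constant $2$-functor at $e$; its nerve factors through a single point, so $|N\mathrm{const}_e|$ is the constant map at the vertex $[e]$.

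First I would specify the components of the transformation $\eta \cn \mathrm{const}_e \TO \Id$. For an object $a$, the unit law $a \oplus e = a$ lets me take $\eta_a = (a, 1_a) \cn e \to a$. For a $1$-cell $g = (s, \al) \cn a \to b$, the composition formula of \cref{prop:sinvx_is_2cat} gives $g \circ \eta_a = (s \oplus a, \al)$, whereas $\eta_b \circ \mathrm{const}_e(g) = \eta_b = (b, 1_b)$; I would then declare the structure $2$-cell to be $\eta_g = \<\al, 1_\al\> \cn g \circ \eta_a \TO \eta_b$, whose first component is $\al \cn s \oplus a \to b$ and whose second component is the identity $2$-cell $1_\al$. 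Only the forward direction admits such a canonical cell, which is why $\eta$ is lax rather than oplax; this is immaterial below, since \cref{thm:trans-htpy} applies equally to both.

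Next I would check the three axioms of a lax transformation from \cref{defn:lax_trans}: naturality against $2$-cells of $S^\inv *$, the unit condition $\eta_{1} = 1$, and compatibility of $\eta_{g'g}$ with the pasting of $\eta_{g}$ and $\eta_{g'}$. Each axiom unwinds to an equality of equivalence classes $\<p, A\> = \<q, B\>$, to be verified from the explicit vertical- and horizontal-composition formulas recorded in the proof of \cref{prop:sinvx_is_2cat}. Granting the axioms, \cref{thm:trans-htpy} shows that the realization of $\eta$ is a homotopy between $|N\mathrm{const}_e|$ and $|N\Id|$. The former is constant at $[e]$ and the latter is the identity of $|NS^\inv *|$, so the identity map is null-homotopic and the space is contractible.

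The main obstacle is the compatibility-with-composition axiom. There the first component of the pasted cell $\eta_{g'} \circ (g' * \eta_g)$ is a horizontal composite built from the Gray structure isomorphisms $\Si$, and one must produce an explicit invertible $2$-cell $\Theta$ identifying it with the first component $\al' \circ s'\al$ of $\eta_{g'g}$ in order to equate the two equivalence classes. This is precisely the step that uses the hypothesis that all $2$-cells of $S$ are invertible: the equivalence relation defining the $2$-cells of $S^\inv *$ can only be exploited once the required comparison cells $\Theta$ are known to be isomorphisms.
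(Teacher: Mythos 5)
Your construction is the same as the paper's: the lax transformation from the constant functor at $e$ to the identity of $S^\inv *$, with components $\eta_a=(a,1_a)$ and structure cells $\<\al,1_\al\>$, concluded via \cref{thm:trans-htpy}. The paper dispatches the axiom-checking you outline more efficiently, by observing that invertibility of 2-cells makes $(a,1_a)$ a \emph{terminal} object of the hom-category $(S^\inv *)(e,a)$, so any two 2-cells into $\eta_b$ with common source coincide and all three axioms hold automatically. One correction to your plan: the invertibility hypothesis is actually consumed by the naturality-against-2-cells axiom --- for $\<q,B\>\cn (s,\al)\Rightarrow (s'',\al'')$ the two sides of that axiom have first components $\al$ and $\al''\circ(q\oplus 1_a)$, and the only available witness $\Theta$ is $B$ itself, invertible only by hypothesis --- whereas the compatibility-with-composition axiom holds with identity witness, since the Gray structure cells $\Si$ appearing in the whiskering formulas are all of the form $\Si_{1,-}$ or $\Si_{-,1}$ and hence identities.
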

\begin{proof}\label{ex:sinvpt}
  We will produce a lax transformation from the constant 2-functor
  $S^\inv* \to S^\inv*$ at $e$ to the identity functor on $S^\inv*$.
  The result then follows by \cref{thm:trans-htpy}.

  The objects of $S^\inv *$ can be identified with the objects of $S$
  and we omit the coordinate for the cells appearing in $*$.  The
  1-cells of $S^\inv *$ are given by pairs $(s,\al)\cn a \to b$ where
  $s \in S$ and $\al \cn sa \to b$.  The 2-cells of $S^\inv *$ are
  given by equivalence classes $\<p,A\>$ where $A \cn \al \Rightarrow
  \al' \circ (p \oplus 1_a)$.  Two equivalence classes $\<p,A\>$ and
  $\<q,B\>$ are equal if there is a 2-cell $\Th\cn p \iso q$ in $S$
  such that $(1_{\al'} * (\Th \oplus 1)) \circ A = B$.

  For each object $a \in S$, there is a canonical 1-cell $(a,1_a)\cn e
  \to a$ in $S^\inv *$ and for each other 1-cell $(s,\al)\cn e \to a$
  in $S^\inv *$ there is a canonical 2-cell $\<\al,1_{\al}\>\cn(s,\al)
  \Rightarrow (a,1)$.  If the 2-cells in $S$ are invertible, then this
  2-cell is unique since, for any other such 2-cell $\<q,B\>$, we have
  $B \cn \al \Rightarrow q$ invertible by hypothesis, and therefore
  taking $\Theta = B$ gives $\<\al,1_{\al}\> = \<q,B\>$.
  Thus $(a,1)$ is terminal in each hom-category $(S^\inv*)(e,a)$.  The
  1-cells $(a,1)$ therefore assemble to a lax transformation from the
  constant functor $S^\inv* \to S^\inv*$ at $e$ to the identity
  functor on $S^\inv*$.
\end{proof}

Next we discuss the action of $S$ on $S^\inv X$.
\begin{lem}\label{lem:act-on-X}
  The action $S \otimes X \to X$ induces an action of $S$ on $S^\inv
  X$ which we write as
  \[
  \xi \cn S
  \otimes S^\inv X \to S^\inv X.
  \]
\end{lem}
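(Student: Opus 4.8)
The plan is to define $\xi$ as the diagonal action $r.(a,x) = (ra, rx)$ on the pairs underlying $S^\inv X$, transported through the construction of \cref{defn:SinvX} and corrected, wherever two elements of $S$ must be commuted past one another, by whiskered copies of the symmetry $\beta$. Since the source is a Gray tensor product, I will invoke its universal property: giving a 2-functor $\xi\cn S \otimes S^\inv X \to S^\inv X$ is the same as giving a 2-functor
\[
\widehat{\xi}\cn S \to \mathrm{Gray}(S^\inv X, S^\inv X)
\]
into the internal hom, whose objects are 2-functors, whose 1-cells are pseudonatural transformations, and whose 2-cells are modifications. Under this correspondence the generating 2-cells $\Si_{f,g}$ of $S \otimes S^\inv X$ become exactly the pseudonaturality constraints of the images of 1-cells of $S$, so this repackaging removes the need to check the $\Si$-relations by hand.

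First I would produce, for each object $r \in S$, the 2-functor $r(-)\cn S^\inv X \to S^\inv X$. On objects it is $(a,x) \mapsto (ra, rx)$. A 1-cell $(s,(\al,\phi))\cn (a,x)\to(b,y)$ is sent to the 1-cell with localizing object $s$ and underlying morphism $(\,r\al\circ(\beta_{s,r}a),\ r\phi\circ(\beta_{s,r}x)\,)$ of $S\times X$, where $\beta_{s,r}a$ and $\beta_{s,r}x$ denote the actions of the 1-cell $\beta_{s,r}\cn s\oplus r\to r\oplus s$ on $a$ and on $x$; these reconcile $s(ra)=(s\oplus r)a$ with $r(sa)=(r\oplus s)a$, and similarly in $X$, using associativity of the action. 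A 2-cell $\<p,(A,F)\>$ is sent to the class with unchanged localizing 1-cell $p$ and with $r$ and the same $\beta$-whiskerings applied to $(A,F)$. I would check that this respects the equivalence relation \eqref{eq:icc}, which is immediate since $p$ is untouched and any witnessing $\Theta$ is carried through, and that $r(-)$ is a genuine 2-functor, its unit and composition constraints coming from the 2-naturality of $\beta$ and the interchange cells of $S$. Note that $r(-)$ is \emph{not} induced directly by the functoriality of \cref{prop:sinvx_is_2cat}, since $\mu(r,-)\cn X\to X$ preserves the $S$-action only up to $\beta$ rather than strictly; the $\beta$-corrections above are precisely what compensates for this.

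Next I would assemble these into $\widehat{\xi}$. A 1-cell $f\cn r \to r'$ in $S$ is sent to the pseudonatural transformation $\theta^f\cn r(-) \Rightarrow r'(-)$ whose component at $(a,x)$ is the 1-cell $(e,(fa, fx))\cn (ra,rx) \to (r'a, r'x)$, and whose pseudonaturality constraint at a 1-cell $(s,(\al,\phi))$ is built from the naturality of $f$ against $\al$ and $\phi$ together with the symmetry; a 2-cell $\chi\cn f \Rightarrow f'$ in $S$ is sent to the evident modification. Verifying that $\widehat{\xi}$ preserves identities and both compositions reduces to the fact that the action is a 2-functor and to the two pasting axioms for $\beta$ in \cref{defn:pgm}. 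This yields the desired $\xi$.

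Finally I would check the two axioms of \cref{defn:pgm_action}. The unit axiom $\xi(e,-) = \Id_{S^\inv X}$ follows once one observes that $e$ is a strict unit and $\beta_{s,e}=1_s$. For associativity, $\xi\circ(\oplus \otimes 1)$ and $\xi\circ(1\otimes\xi)$ agree on objects because the given action is associative; on 1- and 2-cells they differ only in how the $\beta$-corrections are organized, and their equality is exactly the compatibility of $\beta_{s,\,r\oplus r'}$ with $\beta_{s,r}$ and $\beta_{s,r'}$ recorded in the second (hexagon-type) pasting axiom of a permutative Gray monoid. I expect this $\beta$-bookkeeping to be the main obstacle: every structural verification---2-functoriality of each $r(-)$, pseudonaturality of each $\theta^f$, and above all the associativity axiom---ultimately invokes the coherence of the symmetry from \cref{defn:pgm}, so the real work lies in organizing these large pasting computations rather than in any conceptual difficulty.
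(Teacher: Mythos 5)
Your construction is genuinely different from the paper's, and the difference matters downstream. The paper's $\xi$ is \emph{not} the diagonal action: it sends $s\otimes(a,x)$ to $(a,sx)$, leaving the first coordinate untouched, with $s\otimes(t,(\al,\phi))\mapsto(t,(\al,\,s\phi\circ\beta_{t,s}x))$, so the $\beta$-correction appears only in the $X$-coordinate. Your diagonal action $(a,x)\mapsto(ra,rx)$ is instead the composite $\xi(r,-)\circ r^\inv$ of \cref{prop:action-on-X-invertible}, which the paper records separately in \cref{rmk:act-via-diag}. As a proof of the bare statement of \cref{lem:act-on-X} this substitution might pass, but it does not produce the action the paper needs: \cref{thm:main_technical} uses crucially that the induced action on $S^\inv *$ is \emph{trivial}, which holds for the paper's $\xi$ (only the $*$-coordinate is acted on) but fails for the diagonal action, which acts on $S^\inv *$ by $a\mapsto ra$. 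The proof of homotopy invertibility in \cref{prop:action-on-X-invertible} likewise depends on having $\xi(s,-)$ and $s^\inv$ available as two separate 2-functors whose composite is your diagonal. Your use of the Gray adjunction to absorb the $\Si$-relations into pseudonaturality constraints is a legitimate and arguably cleaner packaging than the paper's direct cell-by-cell definition, and would work equally well for the paper's $\xi$.

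The substantive gap is internal to your argument: you transpose through the Gray adjunction, whose target consists of \emph{strict} 2-functors, pseudonatural transformations and modifications, and \cref{defn:pgm_action} also demands a strict 2-functor with strictly associative action; yet you then describe $r(-)$ as having ``unit and composition constraints.'' If those constraints are not identities, the transposition does not yield a 2-functor out of $S\otimes S^\inv X$ and the result is not an action in the sense required. Verifying that $r(-)$ strictly preserves composition of 1-cells is precisely the delicate point: comparing $r(g'\circ g)$ with $r(g')\circ r(g)$ forces you, after decomposing $\beta_{u\oplus t,r}$ by the second axiom of \cref{defn:pgm}, to commute a whiskered $\beta_{t,r}$ past a whiskered $\al$ (and similarly in $X$), and in a Gray monoid such interchanges are governed by the cells $\Si_{\beta_{t,r},\al}$, not by the 2-naturality of $\beta$ (which only slides 1-cells of $S$ past $\beta$ within a single tensor factor). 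This is the one verification that cannot be delegated to generic ``$\beta$-bookkeeping,'' and your write-up leaves it unresolved, indeed mislabels it as a constraint rather than an equality to be proved. (Your appeal to $\beta_{s,e}=1_s$ for the unit axiom is fine: it follows from the two listed axioms of \cref{defn:pgm} even though it is not stated there, and should be justified.)
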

\begin{proof}
  We define $\xi\cn S \otimes S^\inv X \to S^\inv X$ on cells as
  follows; we remind the reader that the symmetry for $S$ is written
  $\beta_{p,q}\cn p q \cong q p$.
  \[
  \begin{array}{rrcl}
    \textrm{$0$-cells:} & s \otimes (a,x) & \mapsto & (a, sx)\\
    \textrm{$1$-cells:} &s \otimes (t,(\al,\phi)) & \mapsto & (t, (\al, s\phi\circ \beta_{t,s} x))\\
     &f \otimes (a,x) & \mapsto & (e, (1_a, fx)) \\
    \textrm{$2$-cells:} &s \otimes \<p, (A, F) \> & \mapsto & \<p, (A, sF*\beta_{t,s}x)\>\\
    & \ga \otimes (a,x) & \mapsto & \< 1_e, (1, \gamma x)\>\\
    & \Si_{f, (t(\al, \phi))} & \mapsto & \<1_e, (1, \beta_{t,s}x * \Si_{f, \phi})\>
  \end{array}
  \]
  All of the axioms (2-functoriality of $\xi$ plus those in
  \cref{defn:pgm_action}) are all straightforward consequences of the
  Gray tensor product, permutative Gray monoid, and action axioms for
  $X$.  Note in particular that condition (\ref{item-3}) of \cref{defn:pgm_action} is essential for the 2-functoriality of $\xi$.
\end{proof}

\begin{defn}\label{defn:S-acts-invertibly}
  We say that $S$ \emph{acts homotopy invertibly} on $X$ or that
  \emph{the action is homotopy invertible} if each 2-functor $\mu(s,
  -) \cn X \to X$ induces a homotopy equivalence on the classifying
  space, $|NX|$ (see \cref{defn:nopl-nerve}).
\end{defn}

\begin{rmk}\label{rmk:htpy_inv}
  Since classifying spaces are CW-complexes, an action is homotopy
  invertible if and only each $\mu(s, -)$ induces a weak homotopy
  equivalence.
\end{rmk}

\begin{prop}\label{prop:action-on-X-invertible}
  The action in \cref{lem:act-on-X} is homotopy invertible.
\end{prop}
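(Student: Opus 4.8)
The plan is to show that each translation $\xi(s,-)\cn S^\inv X\to S^\inv X$ is a homotopy equivalence on classifying spaces by producing an explicit homotopy inverse, drawn from the formal inverse of $s$ that is built into the denominator (first) coordinate of $S^\inv X$. Fix an object $s\in S$. First I would define a 2-functor $\Psi_s\cn S^\inv X\to S^\inv X$ by left multiplication by $s$ in the denominator: on objects $\Psi_s(a,x)=(sa,x)$, leaving the $X$-coordinate fixed; on a 1-cell $(t,(\al,\phi))\cn(a,x)\to(b,y)$ it keeps the label $t$ and the $X$-component $\phi$, and sends the $S$-component $\al\cn ta\to b$ to the composite $s\al\circ(\beta_{t,s}a)\cn tsa\to sta\to sb$, with the analogous formula on 2-cells. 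Since left multiplication $a\mapsto sa$ is a strict 2-functor and $\beta$ is 2-natural, one checks that $\Psi_s$ is a well-defined 2-functor, its compatibility with the bilinearity cells $\Si$ of the Gray tensor product following from the permutative Gray monoid axioms.

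Next I would compute the two composites and verify that
\[
\xi(s,-)\circ\Psi_s=\Psi_s\circ\xi(s,-)=:\Phi_s,
\]
the \emph{diagonal} translation given on objects by $\Phi_s(a,x)=(sa,sx)$ and on a 1-cell $(t,(\al,\phi))$ by $\big(t,(s\al\circ\beta_{t,s}a,\ s\phi\circ\beta_{t,s}x)\big)$. A short unwinding of the formulas in \cref{lem:act-on-X}, applied in each order, produces the same labelled data, so the two composites agree as 2-functors. It then remains only to prove that $\Phi_s$ is homotopic to the identity.

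For this I would exhibit a lax natural transformation $\theta_s\cn 1\Rightarrow\Phi_s$ whose component at $(a,x)$ is the canonical 1-cell $(s,(1,1))\cn(a,x)\to(sa,sx)$, where $(1,1)$ is the identity morphism of $s.(a,x)=(sa,sx)$ in $S\times X$. Evaluating the two relevant composites shows that for a 1-cell $m=(t,(\al,\phi))$ the source $\Phi_s(m)\circ\theta_{(a,x)}$ and target $\theta_{(b,y)}\circ m$ of the required naturality 2-cell carry labels $ts$ and $st$ respectively, so the symmetry supplies the comparison: the naturality 2-cell is the class $\<\beta_{t,s},(A,F)\>$, with $(A,F)$ assembled from the 2-naturality of $\beta$. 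Granting that $\theta_s$ satisfies the transformation axioms, \cref{thm:trans-htpy} gives $|N\Phi_s|\hty 1$, whence $|N\xi(s,-)|$ and $|N\Psi_s|$ are mutually inverse homotopy equivalences and the action is homotopy invertible in the sense of \cref{defn:S-acts-invertibly}.

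The main obstacle is the verification in the third paragraph that $\theta_s$ is a genuine lax natural transformation: constructing the naturality 2-cells $\<\beta_{t,s},(A,F)\>$ and checking their compatibility with composition of 1-cells and with 2-cells. This is precisely where the hexagon and bilinearity ($\beta$ and $\Si$) axioms of the permutative Gray monoid are consumed, and it is the only step requiring serious 2-categorical bookkeeping; note that no invertibility hypothesis on $S$ or $X$ enters, consistent with the generality of the statement. A minor secondary point is confirming that $\Psi_s$ is strictly functorial; should strictness fail, a pseudofunctor still suffices for the conclusion via \cref{thm:trans-htpy} after whiskering.
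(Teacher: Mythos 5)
Your proposal is correct and follows essentially the same route as the paper: the paper defines the same 2-functor $s^\inv$ (your $\Psi_s$), observes that the two composites with $\xi(s,-)$ agree and equal the diagonal translation, and produces the same pseudonatural transformation $1 \Rightarrow \xi(s,-)\circ s^\inv$ with component $(s,(1,1))$ and constraint $\<\beta_{t,s},(1,1)\>$ (in fact the $(A,F)$ you leave to be ``assembled'' are literal identities, since $\beta$ is 2-natural), concluding via \cref{thm:trans-htpy}.
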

\begin{proof}
  We construct a 2-functor $s^\inv \cn S^\inv X \to S^\inv X$ which
  serves as a homotopy inverse to $\xi(s, -)$. The 2-functor $s^\inv$
  is defined on cells below.
  \[
  \begin{array}{rrcl}
    \textrm{$0$-cells:} & (a,x) & \mapsto & (sa, x)\\
    \textrm{$1$-cells:} & (t,(\al,\phi)) & \mapsto & (t, (s\al \circ \beta_{t,s}a, \phi))\\
    \textrm{$2$-cells:} & \<p, (A, F) \> & \mapsto & \<p, (sA*\beta_{t,s}a, F)\>
  \end{array}
  \]
  As in \cref{lem:act-on-X}, the 2-functor axioms are simple to check.
  
  Now we show that $s^\inv$ and $\xi(s, -)$ are homotopy inverses to
  each other. First note that it is a simple matter of applying the
  definitions on cells in \cref{lem:act-on-X} and above to show that
  $\xi(s, -) \circ s^\inv = s^\inv \circ \xi(s, -)$ as 2-functors. In
  particular, we only need to define a single transformation between
  this composite and the identity to produce a homotopy exhibiting
  these 2-functors as homotopy inverse to each other (see
  \cref{thm:trans-htpy}). A pseudonatural transformation $T \cn 1
  \Rightarrow \xi(s,-) \circ s^\inv$ is given by the data below.
  \begin{itemize}
  \item[ ]Component for $0$-cells:
    \[
    T_{(a,x)}  =  (s, (1,1)) \cn (a, x) \to (sa, sx)
    \]
  \item[ ]Pseudonaturality for $1$-cells:
    \[
    T_{(t,(\al,\phi))} = \<\beta_{t,s}, (1,1)\> \cn 
    (t s, (s\al \circ \beta_{t,s}a, s\phi \circ \beta_{t,s}x))
    \Rightarrow
    (s  t, (s\al, s\phi))  \qedhere
    \]
  \end{itemize}
\end{proof}

\begin{rmk}\label{rmk:act-via-diag}
  The composite $\xi(s,-) \circ s^\inv$ discussed in the proof of
  \cref{prop:action-on-X-invertible} is the action of $S$ on $S^\inv
  X$ induced by the diagonal action of $S$ on $S \times X$.
\end{rmk}

There are two conditions we can study which are weaker than all
objects of a permutative Gray monoid being invertible.

\begin{defn}\label{defn:htpy_inv_obj}
  Let $S$ be a permutative Gray monoid. 
  \begin{enumerate}
  \item We say an object $s$ is \emph{homotopy invertible} if the
    2-functor $s \oplus - \cn S \to S$ induces a homotopy equivalence
    on the classifying space.
  \item If every object is homotopy invertible, we say that $S$ is
    \emph{homotopy grouplike}. This is equivalent to the action of $S$
    on itself being homotopy invertible.
  \end{enumerate}
\end{defn}

\begin{defn}\label{defn:faithful_trans}
  Let $S$ be a permutative Gray monoid. We say that $S$ has
  \emph{faithful translations} if, for each $s,x,y\in S$, the functor
  \[
  s \oplus - \cn S(x,y) \to S(s x, s y)
  \]
  is faithful. Explicitly, this means that if $\al, \be \cn f
  \Rightarrow g$ are parallel 2-cells such that $s \al = s \be$, then
  $\al = \be$.
\end{defn}

\begin{thm}\label{prop:S-inv-S-pgm}
  Let $S$ be a permutative Gray monoid.
  \begin{enumerate} 
  \item $S^\inv S$ is a permutative Gray monoid.
  \item The 2-functor $i \cn S \to S^\inv S$ of
    \cref{prop:sinvx_is_2cat} is a strict functor of permutative Gray
    monoids (\cref{defn:strict-functor-gray-mon}).
  \item\label{hty-grouplike} $S^\inv S$ is homotopy grouplike.
  \end{enumerate}
\end{thm}
\begin{proof}
  We begin by defining a 2-functor $S^\inv S \otimes S^\inv S \to
  S^\inv S$ which we also write as $\oplus$ using infix notation. An
  object of $S^\inv S \otimes S^\inv S$ is a pair $\big( (a, x), \,
  (a', x') \big)$, and we define
  \[
  (a,x) \oplus (a', x') = (a  a', x  x').
  \]
  Given $(s, (\al, \phi)) \cn (a,x) \to (b,y)$, we define
  \[
  (s, (\al, \phi)) \oplus (a',x') \cn (a,x) \oplus (a', x') \to (b,y) \oplus (a', x')
  \]
  to be the 1-cell $(s, (\al a', \phi x'))$. The 1-cell $(a',x')
  \oplus (s, (\al, \phi))$ is defined to be
  \[
  (s, (a' \al \circ \be_{s,a'} a, x' \phi \circ \be_{s,x'} x)).
  \]
  The 2-cells in $S^\inv S \otimes S^\inv S$ come in three varieties:
  $\Ga \otimes 1$ and $1\otimes\Ga$ for a 2-cell $\Ga$ in $S^\inv S$,
  and the invertible 2-cells $\Si$ indexed by pairs of 1-cells in
  $S^\inv S$. For a 2-cell $\<p, (A, F) \>$, we define
  \[
  \<p, (A, F) \> \oplus (a', x') = \<p, (A a', F  x')\>.
  \]
  We define
  \[
  (a', x') \oplus \<p, (A, F) \> = \<p, (a'A*\be_{s,a'}a, x'F*\be_{s,x'}x) \>.
  \]
  Let $(s, (\al, \phi)) \cn (a,x) \to (b,y)$ and $(s', (\al',
  \phi'))\cn (a',x') \to (b',y')$ be a pair of 1-cells in $S^\inv
  S$. The image of $\Si$ indexed by this pair is
  \[\<\be_{s,s'}, (\Si^\inv_{\al, \al'}*s\be a', \Si^\inv_{\phi, \phi'} *s\be x')\>.\]
  With the definitions in place, we leave the routine verifications to
  the reader, noting only that all the axioms follow from the
  corresponding axioms for $S$ and the Gray tensor product axioms. The
  symmetry isomorphism \[\be \cn (a,x) \oplus (a',x') \cong (a',x')
  \oplus (a,x)\] is defined to be $(e, (\be_{a,a'}, \be_{x,x'}))$.

  The second claim follows by straightforward application of the
  definitions of the 2-functor $i$ and the symmetric monoidal
  structure on $S^\inv S$.

  For the third part, to show that $(a,x) \oplus - $ is a homotopy
  equivalence upon taking nerves, we need only note that $(x,a) \oplus
  -$ is an inverse up to homotopy. Indeed, there is a pseudonatural transformation
  \[
  \theta\cn \Id \Rightarrow (x,a) \oplus (a,x) \oplus -
  \]
  with component
  \[
  \theta_{(b,y)} = (xa, (1,\beta_{x,a} y)) \cn (b,y) \to (xab,axy)
  \]
  for an object $(b,y)$ and pseudofunctoriality constraint
  \[
  \theta_{(t,(\al,\phi))} = \<\beta_{t,xa}, (1,1) \>
  \]
  for a 1-cell $(t,(\al,\phi))\cn (b,y) \to (b',y')$.
\end{proof}

\begin{rmk}
In the argument for \cref{prop:S-inv-S-pgm}~(\ref{hty-grouplike}), the
component of $\theta$ at $(e,e)$ is a morphism
\[
\theta_{e,e} = (xa, (1, \beta_{x,a}))\cn (e,e) \to (xa,ax)
\]
that we might call $\eta_{a,x}$.  In the case that $S$ is a permutative
1-category, Thomason \cite{Tho80Phony} has noted that the $\eta_{a,x}$ are not
natural with respect to morphisms $(a,x) \to (a',x')$ unless the
symmetry of $S$ is trivial.  A similar statement is true here, with the added caveat that the assignment $(a,x) \mapsto (xa,ax)$ is only a pseudofunctor, and the maps $\eta$ cannot be made the components of any kind of transformation.
\end{rmk} 

We end this section by comparing our construction with Quillen's
classical construction for 1-categories \cite{Gra1976Higher}. For a
category $C$, we write $dC$ for the locally discrete 2-category
obtained by adding identity 2-cells. We note that for a permutative
category $C$, $dC$ is a permutative Gray monoid.

\begin{prop}
  Let $S$ be a permutative category and $X$ be a category equipped with a
  strict action of $S$.  Then there is a bijective on objects
  biequivalence
  \[
  (dS)^\inv (dX) \to d(S^\inv X)
  \]
  where the source is the 2-categorical construction of
  \cref{defn:SinvX} applied to $dS$ and $dX$ and the target is the locally
  discrete 2-category obtained from Quillen's 1-categorical $S^\inv
  X$. This map is compatible with the action of $dS$.
\end{prop}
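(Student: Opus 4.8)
The plan is to exhibit a strict $2$-functor $\Phi\cn (dS)^\inv(dX) \to d(S^\inv X)$, show it is the identity on objects, and then prove it is a local equivalence. Both constructions have the same objects—pairs $(a,x)$—so I set $\Phi(a,x)=(a,x)$, which gives bijectivity on objects. A $1$-cell of $(dS)^\inv(dX)$ is a triple $(s,(\al,\phi))$ with $\al\cn sa\to b$ and $\phi\cn sx\to y$ morphisms of $S$ and $X$, and I send it to its equivalence class in Quillen's $S^\inv X$. Comparing the composition formula of \cref{prop:sinvx_is_2cat}, namely $(t,(\ga,\psi))\circ(s,(\al,\phi)) = (ts,(\ga\circ t\al,\psi\circ t\phi))$, with Quillen's composition of representatives shows that $\Phi$ preserves $1$-cell composition on representatives and hence on classes. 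Because the target is locally discrete, $\Phi$ must collapse every $2$-cell to an identity; to see this is legitimate I first record that a $2$-cell $\<p,\id\>\cn (s,(\al,\phi))\Rightarrow(s',(\al',\phi'))$ in $(dS)^\inv(dX)$ is exactly a morphism $p\cn s\to s'$ of $S$ with $\al=\al'\circ(p\oplus 1_a)$ and $\phi=\phi'\circ(p\cdot 1_x)$ (no further identification, since a $2$-cell isomorphism $\Theta\cn p\iso q$ in the locally discrete $dS$ forces $p=q$). This $p$ is precisely the datum witnessing that $(s,\al,\phi)$ and $(s',\al',\phi')$ name the same class in $S^\inv X$, so parallel $1$-cells joined by a $2$-cell have equal image and $\Phi$ is a well-defined strict $2$-functor.

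It then remains to show that for each pair of objects the induced functor
\[
(dS)^\inv(dX)\big((a,x),(b,y)\big) \to (S^\inv X)\big((a,x),(b,y)\big)
\]
is an equivalence onto the discrete hom-category of $d(S^\inv X)$. By the description above, the source hom-category has the triples $(s,(\al,\phi))$ as objects and the compatible morphisms $p\cn s\to s'$ as morphisms. The functor is surjective on objects by construction, and its effect on connected components is exactly Quillen's defining relation, so $\pi_0$ of the source hom-category is in bijection with the classical hom-set. The remaining point—and the main obstacle—is that each connected component is contractible, equivalently that the source hom-category is equivalent to its discrete set of components.

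This contractibility step is where the standing hypotheses of Quillen's construction enter. Invertibility of the morphisms of $S$ makes each witness $p$ an isomorphism, so $\<p,\id\>$ has vertical inverse $\<p^\inv,\id\>$ and the hom-category is a groupoid. Faithfulness of translations then forces the witness to be unique: if $p,p'\cn s\to s'$ both satisfy $\al'\circ(p\oplus 1_a)=\al=\al'\circ(p'\oplus 1_a)$, then cancelling the isomorphism $\al'$ gives $p\oplus 1_a=p'\oplus 1_a$, and faithfulness of $-\oplus a$ (obtained from $a\oplus-$ via the symmetry) yields $p=p'$. Thus every object has trivial automorphism group and any two objects of a component are uniquely isomorphic, so each component is equivalent to the terminal category; hence the hom-category is equivalent to the discrete category on its components, which is the target, and $\Phi$ is a local equivalence. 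Finally I would verify compatibility with the $dS$-action by matching the formulas of \cref{lem:act-on-X} against the evident $S$-action on Quillen's $S^\inv X$ cell-by-cell; since $\Phi$ is the identity on objects and on $1$-cell data and collapses $2$-cells, this check is routine and commutes strictly. I expect the groupoid-and-uniqueness analysis of the hom-category to be the only substantive difficulty.
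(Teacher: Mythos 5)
Your proof is correct and follows essentially the same route as the paper's: both reduce the claim to the observation that a 2-cell of $(dS)^\inv(dX)$ between parallel 1-cells is exactly a (necessarily unique, necessarily invertible) compatibility witness $p$, so that each hom-category is equivalent to its discrete set of connected components, which is Quillen's hom-set --- the paper phrases this as a quotient identifying isomorphic 1-cells, while you phrase it as a local equivalence, but these are the same argument. One remark in your favor: you make explicit that the contractibility of components uses invertibility of the morphisms of $S$ and faithful translations; these standing hypotheses of Quillen's construction do not appear in the literal statement, but they are equally needed for the paper's own terser assertion that there is at most one 2-cell between any pair of parallel 1-cells.
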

\begin{proof}
  The 2-cells of $(dS)^\inv (dX)$ are not necessarily all identities,
  but for any pair of 1-cells in $(dS)^\inv (dX)$ there is at most one
  2-cell between them, and one exists if and only if the corresponding
  1-cells in Quillen's $S^\inv X$ are in the same equivalence class.
  Taking the quotient of the 2-category $(dS)^\inv (dX)$ by
  identifying isomorphic 1-cells yields a category which is readily
  seen to be Quillen's $S^\inv X$. By the uniqueness of 2-cells
  between a given pair of parallel 1-cells, this quotienting process
  is then the bijective on objects biequivalence we desire.
\end{proof}

\subsection{The canonical projection is an opfibration}\label{sec:application}

In this section we prove that the canonical projection
\[
\rho\cn S^\inv X \to S^\inv *.
\]
induced by the unique 2-functor $X \to *$ is an opfibration. As in the
proof of \cref{lem:s-inv-pt-contract} we omit the final coordinate
(that of the terminal category) when describing cells in $S^\inv *$.
The following result will be useful for our applications in
\cref{thm:main_technical}.

\begin{lem}\label{lem:iso_in_sinvs}
  A 2-cell $\< p, (A, F) \>$ is an isomorphism in $S^\inv X$ if and
  only if the 1-cell $p$ is an equivalence in $S$, the 2-cell $A$ is
  an isomorphism in $S$, and the 2-cell $F$ is an isomorphism in $X$.
\end{lem}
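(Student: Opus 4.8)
The plan is to prove both implications by reducing the invertibility of the equivalence class $\< p,(A,F)\>$ to the componentwise data $p$, $A$, $F$, using the explicit vertical composition from \cref{prop:sinvx_is_2cat} and the relation \eqref{eq:icc}. First I would record two preliminary facts. The three stated conditions are independent of the chosen representative: if $\< p,(A,F)\> = \< q,(B,G)\>$ via an isomorphism $\Theta\cn p\iso q$, then \eqref{eq:icc} forces $B = (\al'*\Theta a)\circ A$ and $G = (\phi'*\Theta x)\circ F$, so $A$ is invertible iff $B$ is and $F$ iff $G$ is, while $p$ and $q$ are isomorphic $1$-cells and hence one is an equivalence iff the other is. Second, if $p\cn s\to s'$ is an equivalence in $S$, then $pa = p\oplus 1_a$ and $px = \mu(p,1_x)$ are equivalences in $S$ and $X$ respectively, since $\oplus$ and $\mu$ are 2-functors and 2-functors preserve equivalences; consequently precomposition by $pa$ (resp.\ $px$) is an equivalence of hom-categories, in particular faithful and isomorphism-reflecting. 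This last point is the engine of the whole argument.

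For the ``if'' direction I would choose a pseudoinverse $\bar p$ of $p$, upgraded to an adjoint equivalence with invertible unit $\eta\cn 1_s\iso \bar p\circ p$ and counit $\varepsilon\cn p\circ \bar p\iso 1_{s'}$, and define a candidate inverse $\< \bar p,(B,G)\>$ with
\[
B = (A^\inv * \bar p a)\circ(\al' * (\varepsilon a)^\inv), \qquad
G = (F^\inv * \bar p x)\circ(\phi' * (\varepsilon x)^\inv).
\]
Both $B$ and $G$ are invertible, being composites of invertible 2-cells. A direct check, streamlined by the triangle identities of the adjoint equivalence, shows that the two vertical composites equal the respective identity 2-cells, with the witnessing isomorphisms $\Theta$ of \eqref{eq:icc} supplied by $\eta$ and $\varepsilon$. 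Hence $\< p,(A,F)\>$ is invertible.

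For the ``only if'' direction, suppose $\< q,(B,G)\>$ is a vertical inverse. Pasting the defining triangles as in \cref{prop:sinvx_is_2cat} and applying \eqref{eq:icc} to the two composite-equals-identity conditions yields isomorphisms $\Theta\cn q\circ p\iso 1_s$ and $\Theta'\cn p\circ q\iso 1_{s'}$, so that $p$ is an equivalence in $S$ with pseudoinverse $q$; it also yields the $S$-component equations
\[
(\al*\Theta a)\circ(B* pa)\circ A = 1_\al, \qquad
(\al'*\Theta' a)\circ(A* qa)\circ B = 1_{\al'},
\]
together with their exact analogues in $X$ for $F$, $G$, $\phi$, $\phi'$, $px$, $qx$. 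The first equation shows $A$ is a split monomorphism in the relevant hom-category, hence so is $A* qa$, the image of a split mono under the functor ``precompose by $qa$''. In the second equation the factor $\al'*\Theta' a$ is invertible because $\Theta'$ is, so $(A* qa)\circ B$ equals $(\al'*\Theta' a)^\inv$ and is therefore invertible, which makes $A* qa$ a split epimorphism. A morphism that is both a split mono and a split epi is an isomorphism, so $A* qa$ is invertible; since precomposition by the equivalence $qa$ reflects isomorphisms, $A$ itself is invertible. The identical argument in $X$, with $qx$ in place of $qa$, shows $F$ is invertible.

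The main obstacle is precisely the ``only if'' step just described: the relation \eqref{eq:icc} only produces \emph{split} invertibility data for $A$ and $F$ after whiskering by the scalar $1$-cells, and some work is needed to upgrade this to genuine two-sided invertibility. The resolution is the observation that $p$ (hence $pa$ and $px$) being an equivalence turns the relevant whiskering operations into equivalences of hom-categories, which reflect isomorphisms; this is what lets the split conditions assemble into invertibility of $A$ and $F$ on the nose.
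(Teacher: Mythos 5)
Your proposal is correct and follows essentially the same route as the paper's (much terser) proof: the paper likewise notes that equality with the identity class $\<1,(1,1)\>$ forces the 2-cell components to be invertible and then "considers when a composite is the identity," which is exactly your split-mono/split-epi analysis combined with the fact that whiskering by the equivalences $pa$, $px$ reflects isomorphisms. Your explicit construction of the inverse in the "if" direction and the check of representative-independence fill in details the paper leaves to the reader, but introduce no new ideas.
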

\begin{proof}
  The identity 2-cell in $S^\inv X$ is $\< 1, (1, 1) \>$, and $\< p,
  (A, F) \> = \< 1, (1, 1) \>$ if there exists a 2-cell isomorphism
  $\Theta \cn p \cong 1$ satisfying the equality in \cref{eq:icc}.
  This implies that $A, F$ are invertible 2-cells. The claim in the
  statement of the lemma follows by considering when a composite $\<
  p', (A', F') \> \circ \< p, (A, F) \>$ is equal to the identity.
\end{proof}

We provide the following lemma to help the reader identify the data of
a lift (in the sense of \cref{defn:opcart} \eqref{it:opcart-1}) for
the special case of the 2-functor $\rho\cn S^\inv X \to S^\inv
*$. This will aid in the proof of \cref{prop:pi_opfib}.

\begin{lem}\label{lem:lift-witness-theta}
  Let $(s,\al,\phy)\cn (a,x) \to (d,z)$ and $(u,\ga, \chi)\cn (a,x)
  \to (b,y)$ be 1-cells in $S^\inv X$ and let $\<p,A\>\cn (t,\beta)
  \circ (s,\al) \iso (u,\ga)$ be a 2-cell isomorphism in $S^\inv *$.
  A 1-cell $(v, \de, \la) \in S^\inv X$ together with 2-cells $\<p_1,
  A_1 \> \in S^\inv *$ and $\<p_2, A_2, F_2 \> \in S^\inv X$ give a
  lift of the triple $\big((u,\ga,\chi),(t,\be),\<p,A\>\big)$ if and
  only if there is $\Theta\cn p \iso p_2 \circ (p_1 s)$ in $S$ such
  that the following equality of pasting diagrams holds in $S$.
  \begin{equation}\label{eq:theta-prism}
    \begin{tikzpicture}[x=22.5mm,y=18mm,vcenter]
      \newcommand{\boundary}{
        \draw[tikzob,mm] 
        (0,0) node (tsa) {tsa}
        (1,1) node (ua) {ua}
        (2,0) node (b) {b}
        (1,-1) node (td) {td}
        (0,1) node (vsa) {vsa}
        ;
        \path[tikzar,mm] 
        (tsa) edge node {p_1sa} (vsa)
        (vsa) edge node {p_2a} (ua)
        (ua) edge node {\ga} (b)
        (tsa) edge[swap] node {t\al} (td)
        (td) edge[swap] node{\beta} (b)
        ;
      }
      \draw (2.4,.25) node[font=\large] {=};
      \begin{scope}
        \boundary
        \path[tikzar,mm]
        (tsa) edge[swap] node {pa} (ua)
        ;
        \draw[2cell]
        (vsa) ++(-45:.5) node[rotate=135, 2label={below,\Theta a}] {\Rightarrow}
        node[between=td and ua at .5, rotate=90, 2label={below,A}] {\Rightarrow}
        ;
      \end{scope}
      
      \begin{scope}[shift={(3,0)}]
        \boundary
        \draw[tikzob,mm]
        (1,0) node (vd) {vd}
        ;
        \path[tikzar,mm]
        (vsa) edge[swap] node {v\al} (vd)
        (td) edge node {p_1d} (vd)
        (vd) edge node {\de} (b)
        ;
        \draw[2cell]
        node[between=tsa and vd at .5, rotate=90, 2label={below,\Sigma}] {\Rightarrow}
        node[between=vd and ua at .5, rotate=90, 2label={below,A_2}] {\Rightarrow}
        (vd) ++(-45:.5) node[rotate=135, 2label={below,A_1}] {\Rightarrow}
        ;
      \end{scope}
    \end{tikzpicture}
  \end{equation}
\end{lem}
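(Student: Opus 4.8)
The plan is to translate the single defining equation of an opcartesian lift (\cref{defn:opcart} \eqref{it:opcart-1}) for the specific 2-functor $\rho$ into the pasting equality \eqref{eq:theta-prism}. First I would set up the dictionary between the abstract data of \cref{defn:opcart} \eqref{it:opcart-1} and the cells named in the statement. Since $\rho$ is induced by $X \to *$, on cells it sends $(a,x) \mapsto a$, $(s,(\al,\phi)) \mapsto (s,\al)$, and $\langle p,(A,F)\rangle \mapsto \langle p, A\rangle$, discarding the $X$-coordinate. Thus the opcartesian 1-cell $h$ is $(s,\al,\phy)$, the 1-cell $u$ of the definition is $(u,\ga,\chi)$, the 1-cell $t$ is $(t,\be)$, and the given iso $\al\cn t\circ \rho(h) \iso \rho(u)$ is $\langle p, A\rangle\cn (t,\be)\circ(s,\al) \iso (u,\ga)$. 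A candidate lift is $\tilde t = (v,\de,\la)$ together with 2-cell isomorphisms $\al_1 = \langle p_1,A_1\rangle$ in $S^\inv*$ and $\al_2 = \langle p_2, A_2, F_2\rangle$ in $S^\inv X$; by \cref{lem:iso_in_sinvs} the requirement that these be isomorphisms amounts to $p_1,p_2$ being equivalences and $A_1,A_2,F_2$ invertible. The entire content of ``lift'' then reduces to the one equation $\langle p,A\rangle = \rho(\al_2) \circ (\al_1 * \rho(h))$ in $S^\inv *$, since the $X$-coordinates $\la$ and $F_2$ are annihilated by $\rho$.

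Next I would compute the right-hand composite explicitly. Using the composition formula of \cref{prop:sinvx_is_2cat} one has $(t,\be)\circ(s,\al) = (ts,\be\circ t\al)$ and $(v,\de)\circ(s,\al) = (vs, \de \circ v\al)$. The whiskering $\al_1 * \rho(h) = \langle p_1,A_1\rangle * 1_{(s,\al)}$ is the special case of the horizontal-composition formula in which the inner 2-cell is an identity; specializing that formula collapses the terms involving the inner $p$ and $A$ to identities and leaves an $S$-component $p_1 s$ together with a structure 2-cell $(\de * \Si_{p_1,\al}) \circ (A_1 * t\al)$, where $\Si_{p_1,\al}\cn p_1 d \circ t\al \iso v\al \circ p_1 sa$ is the permutative Gray-monoid interchanger (orientation to be pinned down). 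Composing vertically with $\rho(\al_2) = \langle p_2, A_2\rangle$, whose structure 2-cell is $A_2\cn \de \circ v\al \Rightarrow \ga \circ p_2 a$, yields a 2-cell with $S$-component $p_2 \circ (p_1 s)$ and structure 2-cell $(A_2 * p_1 sa)\circ(\de * \Si_{p_1,\al})\circ(A_1 * t\al)$. This is exactly the pasting on the right-hand side of \eqref{eq:theta-prism}, with the three constituent cells $A_1$, $\Si$, and $A_2$ occupying the three regions of the prism.

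Finally I would invoke the criterion for equality of 2-cells in $S^\inv *$, namely the specialization of \eqref{eq:icc} used in \cref{lem:s-inv-pt-contract}: two classes $\langle p, A\rangle$ and $\langle p_2 \circ (p_1 s), B\rangle$ with common source and target coincide if and only if there is a 2-cell isomorphism $\Theta\cn p \iso p_2 \circ (p_1 s)$ in $S$ with $(\ga * \Theta a) \circ A = B$. Taking $B$ to be the structure 2-cell computed above, this equation is precisely \eqref{eq:theta-prism}: its left-hand side is $(\ga * \Theta a)\circ A$ and its right-hand side is the pasting $B$. This establishes the asserted equivalence in both directions.

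I expect the main obstacle to be the middle step: correctly reducing the general horizontal-composition formula of \cref{prop:sinvx_is_2cat} to the whiskering case, extracting the interchanger $\Si_{p_1,\al}$ with the correct orientation, and then checking that the vertical composite assembles into the specific prism \eqref{eq:theta-prism} rather than a reindexed variant. This is bookkeeping with the Gray-tensor and permutative Gray-monoid axioms; invertibility of the relevant 2-cells (via \cref{lem:iso_in_sinvs}) guarantees that the witnessing $\Theta$ and the structure cells are genuine isomorphisms, but it is not needed to produce the equation itself.
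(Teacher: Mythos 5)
Your proof is correct and is precisely the unpacking the paper intends (the lemma is stated without proof as a direct translation of \cref{defn:opcart}~\eqref{it:opcart-1} for $\rho$). The key computation checks out: the whiskering $\<p_1,A_1\>*1_{(s,\al)}$ has $S$-component $p_1s$ and structure cell $(\de*\Si_{p_1,\al})\circ(A_1*t\al)$ with $\Si_{p_1,\al}\cn p_1d\circ t\al\Rightarrow v\al\circ p_1sa$, vertical composition with $\<p_2,A_2\>$ yields the right-hand prism, and \cref{eq:icc} supplies exactly the witness $\Theta$ and the left-hand pasting $(\ga*\Theta a)\circ A$.
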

\begin{defn}\label{defn:witness}
  In the context of \cref{lem:lift-witness-theta}, we say that $\Theta$
  is the \emph{witness} of this lift.
\end{defn}

\begin{prop}\label{prop:pi_opfib}
  Let $S$ be a permutative Gray monoid acting on a 2-category $X$.
  Assume that
  \begin{itemize}
  \item $S$ has faithful translations,
  \item $S$ has invertible 1- and 2-cells, and
  \item $X$ has invertible 2-cells.
  \end{itemize}
  Then the 2-functor $\rho\cn S^\inv X \to S^\inv *$
  induced by the unique 2-functor $X \to *$ is an opfibration.
\end{prop}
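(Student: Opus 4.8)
The plan is to verify directly the three defining conditions of \cref{defn:opfib} for $\rho$, using the explicit description of $S^\inv X$ from \cref{defn:SinvX} and the fact that $\rho$ discards exactly the $X$-coordinates: $\rho(a,x)=a$, $\rho(s,(\al,\phi))=(s,\al)$, and $\rho\<p,(A,F)\>=\<p,A\>$. Throughout I write $px$ for the action of a $1$- or $2$-cell $p$ of $S$ on a cell of $X$, and $p\oplus 1_a$ for whiskering inside $S$, matching the conventions of \cref{defn:SinvX} and \cref{lem:act-on-X}.

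First I would pin down the cartesian $2$-cells. I claim that a $2$-cell $\<p,(A,F)\>$ of $S^\inv X$ is cartesian with respect to $\rho$ (\cref{defn:cart-2}) as soon as its $X$-component $F$ is invertible. To see this I would check the pullback condition of \cref{defn:cart-2}: given a competing $2$-cell with $X$-part $G$ and a base factorization through $\<p,A\>$, the forced lift has $X$-component $N$ determined by the equation $(F*mx)\circ N=G$, whose unique solution is $N=(F*mx)^\inv\circ G$ precisely because $F$, hence $F*mx$, is invertible; faithful translations together with invertibility of the $2$-cells of $S$ then ensure this solution is well defined and unique as an equivalence class $\<-,-\>$. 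Since $X$ has invertible $2$-cells, every $F$ is invertible, so \emph{every} $2$-cell of $S^\inv X$ is cartesian. This settles condition~(3) of \cref{defn:opfib} at once (horizontal composites of cartesian $2$-cells are cartesian because all $2$-cells are), and it supplies the cartesian lifts required for local fibrancy. Concretely, for condition~(2) I would lift a base $2$-cell $\<p,A\>\cn(s_0,\al_0)\Rightarrow\rho(s,(\al,\phi))$ by taking $\wh g=(s_0,(\al_0,\phi\circ px))$ and $\wh\beta=\<p,(A,1)\>$, whose identity $X$-component makes it cartesian by the above; hence $\rho$ is a local fibration (\cref{defn:loc-fib}).

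The substance is condition~(1), the existence of opcartesian lifts of $1$-cells. For $f=(s,\al_0)\cn a\to c$ in $S^\inv *$ I would set $\wh f=(s,(\al_0,1_{sx}))\cn (a,x)\to(c,sx)$, so that $\rho\wh f=f$, and show $\wh f$ is opcartesian via \cref{defn:opcart} using \cref{lem:lift-witness-theta}. For the one-dimensional part \eqref{it:opcart-1}, given $u=(u_S,(\ga,\chi))$, a base $1$-cell $t=(t_S,\beta)$, and an isomorphism $\<p,A\>\cn t\circ\rho\wh f\iso\rho u$ in $S^\inv *$, compute $t\circ\rho\wh f=(t_Ss,\beta\circ t_S\al_0)$, so $p\cn t_Ss\to u_S$ is an equivalence and $A$ is invertible by \cref{lem:iso_in_sinvs} applied to $S^\inv *$. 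I would then produce the lift $\wt t=(t_S,(\beta,\chi\circ px))$ together with $\al_1=1$ and $\al_2=\<p,(A,1)\>$; one checks $\wt t\circ\wh f=(t_Ss,(\beta\circ t_S\al_0,\chi\circ px))$ and $\rho\wt t=t$, so that \cref{lem:lift-witness-theta} recognizes this as a lift with witness $\Theta=1$. For the two-dimensional part \eqref{it:opcart-2}, the given data feed the universal property in $S$ to produce a unique $\wt\be$ fitting the prism \eqref{eq:theta-prism}, after which I would verify \eqref{eq:opcart-diag1} and \eqref{eq:opcart-diag2} with the chosen witnesses.

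I expect the main obstacle to be exactly the uniqueness in this two-dimensional part. Because $2$-cells of $S^\inv X$ and $S^\inv *$ are equivalence classes $\<p,\ldots\>$ modulo a $2$-cell isomorphism $\Theta$ of $S$ as in \eqref{eq:icc}, I must guarantee that the induced $\wt\be$ and its witness are independent of representatives and genuinely unique. This is where the hypothesis of faithful translations (\cref{defn:faithful_trans}) is indispensable: it lets me cancel the left action of the relevant $S$-component on $2$-cells, so that an equality that holds after whiskering by $s$ or $t_S$ already forces equality of the underlying $2$-cells; combined with invertibility of the $1$- and $2$-cells of $S$, this pins down the witness $\Theta$ uniquely and closes the verification.
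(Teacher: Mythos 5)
Your proposal follows essentially the same route as the paper: the same opcartesian lifts $(s,(\al,1_{sx}))$ with the same one-dimensional lift data $(t_S,(\be,\chi\circ px))$, the observation that invertibility of 2-cells in $X$ makes every 2-cell of $S^\inv X$ cartesian (settling local fibrancy and closure under horizontal composition at once), and faithful translations plus invertibility in $S$ to force uniqueness of the two-dimensional lifts. The only difference is one of completeness rather than strategy: the two-dimensional part of opcartesianness, which you defer to ``verify \eqref{eq:opcart-diag1} and \eqref{eq:opcart-diag2} with the chosen witnesses,'' is where the paper expends most of its effort (constructing $\wt{q}=p_1'\circ q\circ m$ from a pseudo-inverse of $p_1$ and checking a chain of pasting identities), and is not an application of any universal property in $S$ as your sketch suggests.
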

\begin{proof}
  We will show that any 1-cell in $S^\inv X$ which is of the form \[(s, \al, 1)
  \cn (a,x) \to (d, sx)\] is opcartesian with respect to $\rho$; this
  immediately shows that any 1-cell $(s,\al)$ in $ S^\inv *$ has an
  opcartesian lift.  Given $(u, \ga, \phy) \cn (a,x) \to (b,y)$ in
  $S^\inv X$ and $\< p, A \> \cn (t, \be) \circ (s,\al) \cong (u,
  \ga)$ in $S^\inv *$, we can choose the lift in the sense of
  \cref{defn:opcart}~\eqref{it:opcart-1} as follows.
  \begin{itemize}
  \item We require a 1-cell $(v, \de, \la) \cn (d, sx) \to (b,y)$, and
    choose it to have $v = t$, $\de = \be$, $\la = \phy \circ px$.
  \item We require a 2-cell $\< p_1, A_1 \> \cn (t, \be)
    \cong (v, \de)$, and choose it to be the identity 2-cell.
  \item We require a 2-cell $\<p_2, A_2, F_2 \> \cn (v, \de,
    \la) \circ (s, \al, \id) \cong (u, \ga, \phy)$, and choose it to
    have $p_2 = p$, $A_2 = A$, $F_2 = 1$.
  \end{itemize}
  This verifies \cref{defn:opcart}~\eqref{it:opcart-1}.  To verify
  \cref{defn:opcart}~\eqref{it:opcart-2}, suppose we are given the following
  data:
  \begin{itemize}
  \item a 2-cell $\<q, B\>\cn (t,\be) \Rightarrow (t',\be')$ in $S^\inv *$; 
  \item a pair of 1-cells $(u, \ga, \phy)$, $(u', \ga', \phy') \cn (a,x)
    \to (b,y)$ in $S^\inv X$;
  \item a 2-cell $\< r, C, G \> \cn (u, \ga, \phy) \Rightarrow (u',
    \ga', \phy')$ in $S^\inv X$;
  \item a pair of 2-cell isomorphisms 
    \[
    \<p, A \> \cn (t, \be) \circ (s, \al) \cong (u, \ga), \quad \<p', A' \> \cn (t', \be') \circ (s, \al) \cong (u', \ga')
    \]
    in $S^\inv *$;
  \item a lift $(v, \de, \la)$, $\<p_1, A_1 \>$, $\<p_2, A_2, F_2 \>$ of
    $(u, \ga, \phy)$, $(t,\be)$, $\<p, A \>$ with witness (see \cref{defn:witness})
    $\Theta$; and
  \item a lift $(v', \de', \la')$, $\<p_1', A_1' \>$, $\<p_2', A_2',
    F_2' \>$ of $(u', \ga', \phy')$, $(t',\be')$, $\<p', A' \>$ with witness
    $\Theta'$,
  \end{itemize}
  satisfying equation
  \eqref{eq:opcart-diag0}.
  
  We require a unique isomorphism 2-cell $\< \tilde{q}, \tilde{B},
  \tilde{H} \> \cn (v, \de, \la) \Rightarrow (v', \de', \la')$ in
  $S^\inv X$ satisfying the two equations \eqref{eq:opcart-diag1} and
  \eqref{eq:opcart-diag2}.

  Equation \eqref{eq:opcart-diag0} yields the existence of a
  2-cell isomorphism $\Phi \cn r \circ p \cong p' \circ qs$ in $S$ such
  that the equality 
  \begin{equation}\label{eq:1_p3b}
    \begin{tikzpicture}[x=24mm,y=15mm,vcenter]
      \draw[tikzob,mm] 
      (0,0) node (tsa) {tsa}
      (.5,1) node (t'sa) {t'sa}
      (1.5,1) node (u'a) {u'a}
      (2,0) node (b) {b}
      (1,-1) node (td) {td}
      (1,0) node (t'd) {t'd}
      ;
      \path[tikzar,mm] 
      (tsa) edge node {qsa} (t'sa)
      (t'sa) edge node {p'a} (u'a)
      (u'a) edge node {\ga'} (b)
      (tsa) edge[swap] node {t\al} (td)
      (td) edge[swap] node {\be} (b)
      (t'sa) edge[swap] node {t'\al} (t'd)
      (td) edge node {qd} (t'd)
      (t'd) edge[swap] node {\be'} (b)
      ;
      \draw[2cell]
      (.5,0) node[rotate=90, 2label={below,\Si}] {\Rightarrow}
      (1.25, .5) node[rotate=45, 2label={below,A'}] {\Rightarrow}
      (t'd)++(-60:.4) node[rotate=135, 2label={below,B}] {\Rightarrow}
      (-.5,0) node {=}
      ;
      
      \begin{scope}[shift={(-3,0)}]
        \draw[tikzob,mm] 
        (0,0) node (tsa) {tsa}
        (.5,1) node (t'sa) {t'sa}
        (1.5,1) node (u'a) {u'a}
        (2,0) node (b) {b}
        (1,-1) node (td) {td}
        (1,0) node (ua) {ua}
        ;
        \path[tikzar,mm] 
        (tsa) edge node {qsa} (t'sa)
        (t'sa) edge node {p'a} (u'a)
        (u'a) edge node {\ga'} (b)
        (tsa) edge[swap] node {t\al} (td)
        (td) edge[swap] node {\be} (b)
        (tsa) edge[swap] node {pa} (ua)
        (ua) edge node {ra} (u'a)
        (ua) edge[swap] node {\ga} (b)
        ;
        \draw[2cell]
        (.6, .4) node[rotate=135, 2label={below,\Phi\, a}] {\Rightarrow}
        (1.5, .4) node[rotate=90, 2label={below,C}] {\Rightarrow}
        (1, -.4) node[rotate=90, 2label={below,A}] {\Rightarrow}
        ;
      \end{scope}
    \end{tikzpicture}
  \end{equation}
  holds. To check the two required equations of \cref{defn:opcart}
  \eqref{it:opcart-2}, we need the following:
  \begin{itemize}
  \item a 2-cell isomorphism $\Psi \cn p_1' \circ q \cong \tilde{q}
    \circ p_1$ such that the equality
    \begin{equation}\label{eq:2_p3b}
      \begin{tikzpicture}[x=10mm,y=10mm,vcenter]
        \draw[tikzob,mm] 
        (0,0) node (td) {td}
        (1.25,3) node (vd) {vd}
        (3.75,3) node (v'd) {v'd}
        (5,0) node (b) {b}
        (2,1.5) node (t'd) {t'd}
        (1.25, 2) node[rotate=135, 2label={below,\Psi d}] {\Rightarrow}
        (2, .75) node[rotate=90, 2label={below,B}] {\Rightarrow}
        (3.5, 1.5) node[rotate=60, 2label={below,A_1'}] {\Rightarrow}
        (6,1.5) node {=}
        ;
        \path[tikzar,mm] 
        (td) edge node {p_1d} (vd)
        (vd) edge node {\wt{q}d} (v'd)
        (v'd) edge node {\de'} (b)
        (td) edge[swap] node {\be} (b)
        (td) edge[swap] node {qd} (t'd)
        (t'd) edge[swap] node {p_1'd} (v'd)
        (t'd) edge[swap] node {\be'} (b)
        ;
        \begin{scope}[shift={(7,0)}]
          \draw[tikzob,mm] 
          (0,0) node (td) {td}
          (1.25,3) node (vd) {vd}
          (3.75,3) node (v'd) {v'd}
          (5,0) node (b) {b}
          (1.25, 1) node[rotate=90, 2label={below,A_1}] {\Rightarrow}
          (3.5, 2) node[rotate=45, 2label={below,\wt{B}}] {\Rightarrow}
          ;
          \path[tikzar,mm] 
          (td) edge node {p_1d} (vd)
          (vd) edge node {\wt{q}d} (v'd)
          (v'd) edge node {\de'} (b)
          (td) edge[swap] node {\be} (b)
          (vd) edge[swap] node {\de} (b)
          ;
        \end{scope}
      \end{tikzpicture}
    \end{equation}
    holds, and
  \item a 2-cell isomorphism $\La \cn r \circ p_2 \cong p_2' \circ \tilde{q}s$ such that
    \begin{equation}\label{eq:3.1_p3b}
      \begin{tikzpicture}[x=24mm,y=15mm,vcenter]
        \draw[tikzob,mm] 
        (0,0) node (tsa) {vsa}
        (.5,1) node (t'sa) {v'sa}
        (1.5,1) node (u'a) {u'a}
        (2,0) node (b) {b}
        (1,-1) node (td) {vd}
        (1,0) node (t'd) {v'd}
        (.5,0) node[rotate=90, 2label={below,\Si}] {\Rightarrow}
        (1.25, .5) node[rotate=45, 2label={below,A_2'}] {\Rightarrow}
        (t'd)++(-65:.45) node[rotate=135, 2label={below,\wt{B}}] {\Rightarrow}
        (-.5,0) node {=}
        ;
        \path[tikzar,mm] 
        (tsa) edge node {\wt{q}sa} (t'sa)
        (t'sa) edge node {p_2'a} (u'a)
        (u'a) edge node {\ga'} (b)
        (tsa) edge[swap] node {v\al} (td)
        (td) edge[swap] node {\de} (b)
        (t'sa) edge[swap] node {v'\al} (t'd)
        (td) edge node {\wt{q}d} (t'd)
        (t'd) edge[swap] node {\de'} (b)
        ;
        
        \begin{scope}[shift={(-3,0)}]
          \draw[tikzob,mm] 
          (0,0) node (tsa) {vsa}
          (.5,1) node (t'sa) {v'sa}
          (1.5,1) node (u'a) {u'a}
          (2,0) node (b) {b}
          (1,-1) node (td) {vd}
          (1,0) node (ua) {ua}
          (.6, .4) node[rotate=135, 2label={below,\La a}] {\Rightarrow}
          (1.5, .4) node[rotate=90, 2label={below,C}] {\Rightarrow}
          (1, -.4) node[rotate=90, 2label={below,A_2}] {\Rightarrow}
          ;
          \path[tikzar,mm] 
          (tsa) edge node {\wt{q}sa} (t'sa)
          (t'sa) edge node {p_2'a} (u'a)
          (u'a) edge node {\ga'} (b)
          (tsa) edge[swap] node {v\al} (td)
          (td) edge[swap] node {\de} (b)
          (tsa) edge[swap] node {p_2a} (ua)
          (ua) edge node {ra} (u'a)
          (ua) edge[swap] node {\ga} (b)
          ;
        \end{scope}
      \end{tikzpicture}
    \end{equation}
    \begin{equation}\label{eq:3.2_p3b}
      \begin{tikzpicture}[x=10mm,y=10mm]
        \draw[tikzob,mm] 
        (0,0) node (td) {vsx}
        (1.25,3) node (vd) {v'sx}
        (3.75,3) node (v'd) {u'x}
        (5,0) node (b) {y}
        (2,1.5) node (t'd) {ux}
        (t'd) ++(125:.7) node[rotate=135, 2label={below,\La x}] {\Rightarrow}
        (2, .75) node[rotate=90, 2label={below,F_2}] {\Rightarrow}
        (3.5, 1.5) node[rotate=60, 2label={below,G}] {\Rightarrow}
        (6,1.5) node {=}
        ;
        \path[tikzar,mm] 
        (td) edge node {\wt{q}sx} (vd)
        (vd) edge node {p_2'x} (v'd)
        (v'd) edge node {\phy'} (b)
        (td) edge[swap] node {\la} (b)
        (td) edge[swap] node {p_2x} (t'd)
        (t'd) edge[swap] node {rx} (v'd)
        (t'd) edge[swap] node {\phy} (b)
        ;
        \begin{scope}[shift={(7,0)}]
          \draw[tikzob,mm] 
          (0,0) node (td) {vsx}
          (1.25,3) node (vd) {v'sx}
          (3.75,3) node (v'd) {u'x}
          (5,0) node (b) {y}
          (1.25, 1) node[rotate=90, 2label={below,\wt{H}}] {\Rightarrow}
          (3.5, 2) node[rotate=45, 2label={below,F_2'}] {\Rightarrow}
          ;
          \path[tikzar,mm] 
          (td) edge node {\wt{q}sx} (vd)
          (vd) edge node {p_2'x} (v'd)
          (v'd) edge node {\phy'} (b)
          (td) edge[swap] node {\la} (b)
          (vd) edge[swap] node {\la'} (b)
          ;
          
        \end{scope}
      \end{tikzpicture}
    \end{equation}
    both hold.
  \end{itemize}

  Since the 1-cell $p_1 \cn t \to v$ is an equivalence,
  we fix a pseudo-inverse $m \cn v \to t$ to $p_1$
  together with $\eta \cn 1 \cong mp_1$, $\epz \cn p_1 m \cong 1$
  satisfying the triangle identities.
  
  Let $\tilde{q} = p_1' \circ q \circ m$, and let $\tilde{B}$ be given
  by the pasting diagram below.
  \begin{equation}\label{eq:p4}
    \begin{tikzpicture}[x=30mm,y=25mm,vcenter]
    \draw[tikzob,mm] 
    (0,1) node (vd1) {vd}
    (1,1) node (td) {td}
    (2,1) node (t'd) {t'd}
    (3,1) node (v'd) {v'd}
    (2,0) node (b) {b}
    (1,0) node (vd2) {vd}
    (.70,.70) node[rotate=45, 2label={above,\epz^{-1}d}] {\Rightarrow}
    (1.25, .25) node[rotate=45, 2label={below,A_1^{-1}}] {\Rightarrow}
    (1.70, .70) node[rotate=45, 2label={below,B}] {\Rightarrow}
    (2.25,.70) node[2label={below,A_1'}] {\Rightarrow}
    ;
    \path[tikzar,mm] 
    (vd1) edge node {md} (td)
    (td) edge node {qd} (t'd)
    (t'd) edge node {p_1'd} (v'd)
    (v'd) edge node {\de'} (b)
    (vd1) edge[swap] node {1} (vd2)
    (vd2) edge[swap] node {\de} (b)
    (td) edge node {p_1 d} (vd2)
    (td) edge[swap] node {\be} (b)
    (t'd) edge[swap] node {\be'} (b)
    ;
    \end{tikzpicture}
  \end{equation}

  The 2-cell isomorphism $\Psi$ is the whiskering
  \[
  \begin{tikzpicture}[x=20mm,y=20mm,baseline={(0,0.75).base}]
  \draw[tikzob,mm] 
  (0,1) node (t0) {t}
  (1,1) node (v) {v}
  (3,1) node (v') {v'}
  (1.5,0) node (t1) {t}
  (2.5,0) node (t') {t'}
  (.75,.75) node[rotate=45, 2label={below,\eta}] {\Rightarrow}
  ;
  \path[tikzar,mm] 
  (t0) edge node {p_1} (v)
  (t0) edge[swap] node (id) {1} (t1)
  (v) edge node {\wt{q}} (v')
  (v) edge node {m} (t1)
  (t1) edge[swap] node {q} (t')
  (t') edge[swap] node {p'_1} (v')
  ;
  \end{tikzpicture}
  \]
  and equation \eqref{eq:2_p3b} is satisfied by construction. The
  2-cell isomorphism $\La$ is defined to be the following pasting.
  \begin{equation}\label{eq:X}
  \begin{tikzpicture}[x=26mm,y=20mm,baseline={(0,0.75).base}]
  \draw[tikzob,mm] 
  (0,0) node (vs0) {vs}
  (1.5,0) node (u) {u}
  (3,0) node (u') {u'}
  (0,1) node (vs1) {vs}
  (1,1) node (ts) {ts}
  (2,1) node (t's) {t's}
  (3,1) node (v's) {v's}
  ;
  \path[tikzar,mm] 
  (vs0) edge[swap] node[pos=.55] (p2) {p_2} (u)
  (u) edge[swap] node {r} (u')
  (vs1) edge node {ms} (ts)
  (ts) edge node {qs} (t's)
  (t's) edge node {p'_1s} (v's)
  (vs1) edge[swap] node (id) {1} (vs0)
  (ts) edge node[pos=.65] (p1s) {p_1s} (vs0)
  (ts) edge node {p} (u)
  (t's) edge[swap] node (p') {p'} (u')
  (v's) edge node {p'_2} (u')
  ;
  \draw[2cell]
  (vs1) ++(-45:.5) node[rotate=40, 2label={above,\epz^{-1} s}] {\Rightarrow}
  node[between=ts and p2 at .6, rotate=45, 2label={above=.2,\ \Theta^{-1}\hspace{-1em}}] {\Rightarrow}
  node[between=u and t's at .5, rotate=45, 2label={below,\Phi}] {\Rightarrow}
  node[between=p' and v's at .5, rotate=45, 2label={below,\Theta'}] {\Rightarrow}
  ;
  \end{tikzpicture}
  \end{equation}
  Now we verify equation \eqref{eq:3.1_p3b} as follows.  The two diagrams
  below are equal by the definition of $\Theta'$ (see \ref{defn:witness}). 
  {
  \newcommand{\boundaryXcond}{
    \draw[tikzob,mm] 
    (0,0) node (vsa) {vsa}
    (vsa)++(90:1) node (tsa) {tsa}
    (tsa)++(90:1.0) node (t'sa) {t'sa}
    (t'sa)++(90:2.25) node (v'sa) {v'sa}
    (v'sa)++(0:1.25) node (u'a) {u'a}
    (vsa)++(0:2.75) node (vd1) {vd}
    (vd1)++(90:3) node (b) {b}
    ;
    \path[tikzar,mm] 
    (vsa) edge node {msa} (tsa)
    (tsa) edge node {qsa} (t'sa)
    (t'sa) edge node {p'_1sa} (v'sa)
    (v'sa) edge node {p'_2a} (u'a)
    (u'a) edge node (ga') {\ga'} (b)
    (vd1) edge[swap] node {\de} (b)
    ;
  }
  \newcommand{\tdpartXcond}{ 
    \draw[tikzob,mm] 
    (tsa)++(0:1.25) node (td) {td}
    ;
    \path[tikzar,mm] 
    (tsa) edge node {t\al} (td)
    (td) edge[swap] node {\beta} (b)
    (td) edge node {p_1d} (vd1)
    ;
    \draw[2cell]
    (td) ++(.75,.25) node[rotate=135,2label={below,A_1^{-1}}] {\Rightarrow}
    ;
  }
  \newcommand{\thetaprimeXcond}{ 
    \path[tikzar,mm] 
    (t'sa) edge node[pos=.4] {p'a} (u'a)
    ;
    \draw[2cell]
    (v'sa) ++(-45:.75) node[rotate=135,2label={below,\Theta'a}] {\Rightarrow}
    ;
  }
  \newcommand{\tdprimepartXcond}{ 
    \draw[tikzob,mm] 
    (td)++(90:1.0) node (t'd) {t'd}
    (vsa)++(0:1.25) node (vd0) {vd}
    ;
    \path[tikzar,mm] 
    (t'sa) edge node {t'\al} (t'd)
    (t'd) edge node[pos=.6] {\beta'} (b)
    (td) edge node {qd} (t'd)
    (vd0) edge node {md} (td)
    (vsa) edge[swap] node {v\al} (vd0)
    (vd0) edge[swap] node {1} (vd1)
    ;
    \draw[2cell]
    node[between=td and t'sa at .55, rotate=135, 2label={below,\Sigma}] {\Rightarrow}
    node[between=td and vsa at .55, rotate=135, 2label={below,\Sigma}] {\Rightarrow}
    (t'd) ++(-35:.37) node[rotate=135, 2label={below,B}] {\Rightarrow}
    (vd0) ++(50:.6) node[rotate=135, 2label={above left,\raisebox{-12pt}{$\epz^{-1}d$}}] {\Rightarrow}
    ;
  }
  \newcommand{\uapartXcond}{ 
    \draw[tikzob,mm] 
    (t'sa)++(-0:1.25) node (ua) {ua}
    (vsa)++(0:1.25) node (vsa1) {vsa}
    ;
    \path[tikzar,mm] 
    (tsa) edge node[pos=.45] {pa} (ua)
    (ua) edge node {ra} (u'a)
    (ua) edge node[pos=.35] {\ga} (b)
    (tsa) edge node {p_1s\al} (vsa1)
    (vsa) edge[swap] node {1} (vsa1)
    (vsa1) edge[swap] node {v\al} (vd1)
    ;
    \draw[2cell]
    node[between=ua and ga' at .55, rotate=135, 2label={below,C}] {\Rightarrow}
    node[between=ua and t'sa at .5, shift={(0,.25)}, rotate=135, 2label={below,\Phi\, a}] {\Rightarrow}
    (vsa) ++(45:.6) node[rotate=140, 2label={above left,\hspace{-4.5mm}\raisebox{-11pt}{$\epz^{-1}sa$}}] {\Rightarrow}
    ;
  }
  \begin{equation}\label{eq:Xcond01}
    \begin{tikzpicture}[x=16.5mm,y=16.5mm,vcenter]
      \draw (3.4,2.25) node[2label={above,}] {=};
      \begin{scope}[shift={(0,0)}] 
        \boundaryXcond
        \tdpartXcond
        \tdprimepartXcond
        \draw[tikzob,mm] 
        (t'd)++(90:1.25) node (v'd) {v'd}
        ;
        \path[tikzar,mm] 
        (t'd) edge node {p'_1d} (v'd)
        (v'sa) edge node {v'\al} (v'd)
        (v'd) edge node {\de'} (b)
        ;
        \draw[2cell]
        (v'sa) ++(300:1.2) node[shift={(0,-.25)}, rotate=135, 2label={below,\Sigma}] {\Rightarrow}
        node[between=v'd and u'a at .5, rotate=90, 2label={below,A'_2}] {\Rightarrow}
        (v'd) ++(-60:.6) node[rotate=135, 2label={below,A'_1}] {\Rightarrow}
        ;
      \end{scope}
      \begin{scope}[shift={(4,0)}] 
        \boundaryXcond
        \thetaprimeXcond
        \tdpartXcond
        \tdprimepartXcond
        \draw[tikzob,mm] 
        ;
        \path[tikzar,mm] 
        ;
        \draw[2cell]
        node[between=t'd and u'a at .5, rotate=135, 2label={below,A'}] {\Rightarrow}
        ;
      \end{scope}
    \end{tikzpicture}
  \end{equation}
  Next, the right hand diagram above and left hand
  diagram below are equal by definition of $\Phi$ (\cref{eq:1_p3b})
  and the naturality of $\Si$ with respect to 2-cells.  Lastly, the two diagrams below
  are equal by the definition of $\Theta$.
  \begin{equation}\label{eq:Xcond23}
    \begin{tikzpicture}[x=16.5mm,y=16.5mm,vcenter]
      \draw (3.4,2.25) node[2label={above,}] {=};
      \begin{scope}[shift={(0,0)}] 
        \boundaryXcond
        \thetaprimeXcond
        \tdpartXcond
        \uapartXcond
        \draw[tikzob,mm] 
        ;
        \path[tikzar,mm] 
        ;
        \draw[2cell]
        (td) ++(110:.5) node[rotate=135, 2label={below,A}] {\Rightarrow}
        (vsa1) ++(45:.5) node[rotate=90, 2label={below,\Sigma}] {\Rightarrow}
        ;
      \end{scope}
      \begin{scope}[shift={(4,0)}] 
        \boundaryXcond
        \thetaprimeXcond
        \uapartXcond
        \draw[tikzob,mm] 
        ;
        \path[tikzar,mm] 
        (vsa1) edge[swap] node {p_2a} (ua)
        ;
        \draw[2cell]
        (tsa) ++(0:.65) node[rotate=135, 2label={below,\Theta^{-1}a}] {\Rightarrow}
        node[between=vsa1 and b at .5, rotate=135, 2label={below,A_2}] {\Rightarrow}
        ;
      \end{scope}
    \end{tikzpicture}
  \end{equation}
  }

  Using this definition of $\La$ and the fact that $F_2'$ is
  invertible, we get a unique choice of $\tilde{H}$, concluding the
  construction of the 2-cell $\< \tilde{q}, \tilde{B}, \tilde{H} \>$.
  To show uniqueness, suppose that $\< \wt{q},\wt{B},\wt{H}\>$ and $\<
  \wt{q}',\wt{B}',\wt{H}'\>$ are two lifts, with their corresponding
  2-isomorphisms $\Psi$, $\La$, $\Psi'$ and $\La'$. The 2-cell
  \[
  \begin{tikzpicture}[x=26mm,y=20mm,baseline={(0,0.75).base}]
    \draw[tikzob,mm] 
    (0,0) node (v) {v}
    (1,0) node (t) {t}
    (2,0) node (t') {t'}
    (3,0) node (v') {v'}
    (1.5,1) node (v1) {v}
    (1.5,-1) node (v2) {v}
    ;
    \path[tikzar,mm] 
    (v) edge[swap] node[pos=.45] {m} (t)
    (t) edge[swap] node {q} (t')
    (t') edge[swap] node {p'_1} (v')
    (v) edge node (id) {1} (v1)
    (v) edge[swap] node (id') {1} (v2)
    (t) edge[swap] node {p_1} (v1)
    (t) edge node {p_1} (v2)
    (v1) edge node {\wt{q}} (v')
    (v2) edge[swap] node {\wt{q}'} (v')
    ;
    \draw[2cell]
    node[between=id and t at .6, rotate=315, 2label={above,\raisebox{5pt}{$\epz^{-1}$}}] {\Rightarrow}
    node[between=t and id' at .4, rotate=225, 2label={above,\raisebox{-6pt}{$\epz$}}] {\Rightarrow}
    node[between=v1 and t' at .6, shift={(-.1,0)}, rotate=305, 2label={above,\,\Psi^{-1}}] {\Rightarrow}
    node[between=v2 and t' at .6, rotate=235, 2label={above,\,\Psi'}] {\Rightarrow}
    ;
  \end{tikzpicture}
  \]
  defines a 2-isomorphism $\Xi\cn\wt{q}\cong\wt{q}'$, which by
  \eqref{eq:2_p3b} for $\Psi$ and $\Psi'$ satisfies \eqref{eq:icc} for
  $\wt{B}$ and $\wt{B'}$. Using \cref{eq:2_p3b,eq:3.1_p3b}, one can
  show the following equality of 2-cells in $S$.
  \[
  \begin{tikzpicture}[x=24mm,y=15mm,vcenter]
    \draw[tikzob,mm] 
    (0,0) node (vsa) {vsa}
    (.7,-1) node (v'sa) {v'sa}
    (.7,1) node (ua) {ua}
    (1.4,0) node (u'a) {u'a}
    (2,0) node (b) {b}
    (-.5,0) node {=}
    ;
    \path[tikzar,mm] 
    (vsa) edge node {p_2a} (ua)
    (ua) edge node {ra} (u'a)
    (u'a) edge node {\ga '} (b)
    (v'sa) edge[swap] node {p'_2a} (u'a)
    (vsa) edge[bend right=40,swap] node {\wt{q}'a} (v'sa)
    ;
    \draw[2cell]
    node[between=ua and v'sa at .5, rotate=270, 2label={above,\;\La' a}] {\Rightarrow}
    ;
    
    \begin{scope}[shift={(-3,0)}]
    \draw[tikzob,mm] 
    (0,0) node (vsa) {vsa}
    (.7,-1) node (v'sa) {v'sa}
    (.7,1) node (ua) {ua}
    (1.4,0) node (u'a) {u'a}
    (2,0) node (b) {b}
    ;
    \path[tikzar,mm] 
    (vsa) edge node {p_2a} (ua)
    (ua) edge node {ra} (u'a)
    (u'a) edge node {\ga '} (b)
    (vsa) edge[bend left=40] node[pos=.8] {\wt{q}a} (v'sa)
    (v'sa) edge[swap] node {p'_2a} (u'a)
    (vsa) edge[bend right=40,swap] node {\wt{q}'a} (v'sa)
    ;
    \draw[2cell]
    node[between=ua and v'sa at .5, rotate=270, 2label={above,\;\La a}] {\Rightarrow}
    node[between=vsa and v'sa at .45, rotate=225, 2label={above,\;\Xi\thinspace a}] {\Rightarrow}
    ;
    \end{scope}
  \end{tikzpicture}
  \]
  From the assumption of faithful translations and invertibility of
  1-cells in $S$, we have the equality below, which together with
  \eqref{eq:3.2_p3b} implies that $\Xi$ satisfies \eqref{eq:icc} for
  $\wt{H}$ and $\wt{H'}$.
  \[
  \begin{tikzpicture}[x=27mm,y=17mm,vcenter]
    \draw[tikzob,mm] 
    (0,0) node (vsa) {vs}
    (.7,-1) node (v'sa) {v's}
    (.7,1) node (ua) {u}
    (1.4,0) node (u'a) {u'}
    (-.5,0) node {=}
    ;
     \path[tikzar,mm] 
    (vsa) edge node {p_2} (ua)
    (ua) edge node {r} (u'a)
    (v'sa) edge[swap] node {p'_2} (u'a)
    (vsa) edge[bend right=40,swap] node {\wt{q}'} (v'sa)
    ;
    \draw[2cell]
   node[between=ua and v'sa at .5, rotate=270, 2label={above,\;\La' }] {\Rightarrow}
   ;
    
   \begin{scope}[shift={(-2.4,0)}]
     \draw[tikzob,mm] 
     (0,0) node (vsa) {vs}
     (.7,-1) node (v'sa) {v's}
     (.7,1) node (ua) {u}
     (1.4,0) node (u'a) {u'}
     ;
     \path[tikzar,mm] 
     (vsa) edge node {p_2} (ua)
     (ua) edge node {r} (u'a)
     (vsa) edge[bend left=40] node[pos=.8] {\wt{q}} (v'sa)
     (v'sa) edge[swap] node {p'_2} (u'a)
     (vsa) edge[bend right=40,swap] node {\wt{q}'} (v'sa)
     ;
     \draw[2cell]
     node[between=ua and v'sa at .5, rotate=270, 2label={above,\;\La }] {\Rightarrow}
     node[between=vsa and v'sa at .45, rotate=225, 2label={above,\;\Xi }] {\Rightarrow}
     ;
   \end{scope}
  \end{tikzpicture}
  \]

  This shows that $\< \wt{q},\wt{B},\wt{H}\>=\<
  \wt{q}',\wt{B}',\wt{H}'\>$. Thus we have proved that 1-cells of the
  form $(s, \al, \id)$ are opcartesian with respect to $\rho\cn S^\inv
  X \to S^\inv *$ and further that every 1-cell of $S^\inv *$ has an
  opcartesian lift.

  Note that to prove the second and third condition for $\rho$ to be
  an opfibration, it is sufficient to prove that any 2-cell in $S^\inv
  X$ is cartesian. Indeed, the second condition of \cref{defn:opfib}
  states that $\rho$ is locally a fibration, or equivalently that
  every 2-cell with \emph{target} in the image of $\rho$ has a
  cartesian lift. If $(t, \be, \psi) \cn (a,x) \to (b,y)$ is a 1-cell
  in $S^\inv X$ and $\<p, A\>\cn (s, \al) \Rightarrow \rho(t, \be,
  \psi)$ is any 2-cell in $S^\inv *$, then $\<p, A, \id_{\phy}\>\cn
  (s, \al, \phy) \Rightarrow (t, \be, \psi)$ is a cartesian lift where
  $\phy = \psi \circ px$. The third condition of \cref{defn:opfib}
  states that the horizontal composite of cartesian 2-cells is again
  cartesian, and this follows if all 2-cells are cartesian.

  Fix the data of a pair of 
  2-cells in $S^\inv X$ 
  \[
  \<p, A, F\>:(s,\al,\phy) \Rightarrow (t, \be, \psi), \quad \<q,B,G\> \cn (u, \ga, \chi) \Rightarrow (t, \be,
  \psi),\]
  and let $\<r,C\>: (u,\ga)
  \Rightarrow (s,\al)$ be a 2-cell in $S^\inv *$ such that
  \[
  \rho\<q,B,G\> = \rho\<p,A,F\> \circ \<r,C\>.
  \]
  We must show there is a unique lift $\<r',C',H'\>$ of $\<r,C\>$,
  such that the equality $\<q,B,G\> = \<p,A,F\> \circ \<r',C',H'\>$ holds. To prove
  existence, we construct $H$ such that $\< r,C,H\>$ satisfies the
  condition. By assumption, we have a 2-cell isomorphism $\Theta \cn p
  \circ r \cong q$ such that the following pastings are equal.
  \begin{equation}\label{eq:r_cond1}
    \begin{tikzpicture}[x=30mm,y=20mm]
    \newcommand{\boundary}{
      \draw[tikzob,mm] 
      (0,0) node (ua) {ua}
      (1,0) node (ta) {ta}
      (0.5,-1.5) node (b) {b}
      ;
      \path[tikzar,mm] 
      (ua) edge[bend left=40] node {qa} (ta)
      (ua) edge[bend right=35,swap] node {\ga} (b)
      (ta) edge[bend left=35] node {\be} (b)
      ;
    }
    \boundary
    \draw[tikzob,mm] 
    (0.5,-0.6) node (sa) {sa}
    (sa) ++(0,.6) node {\cong \Theta\, a}
    (sa) ++(210:.3) node[rotate=45, 2label={below,C}] {\Rightarrow}
    (sa) ++(-40:.275) node[rotate=45, 2label={below,A}] {\Rightarrow}
    ;
    \path[tikzar,mm] 
    (ua) edge[swap] node {ra} (sa)
    (sa) edge[swap] node {pa} (ta)
    (sa) edge node {\al} (b)
    ;
    
    \draw (1.5,-.5) node {=};
    
    \begin{scope}[shift={(2,0)}]
      \boundary
      \draw[tikzob,mm] 
      (b) ++(0,1) node[rotate=45, 2label={below,B}] {\Rightarrow}
      ;
    \end{scope}
    \end{tikzpicture}
  \end{equation}
  It suffices to construct a 2-cell $H$ in $X$ such that the equality
  \begin{equation}\label{eq:r_cond2}
    \begin{tikzpicture}[x=30mm,y=20mm]
    \newcommand{\boundary}{
      \draw[tikzob,mm] 
      (0,0) node (ua) {ux}
      (1,0) node (ta) {tx}
      (0.5,-1.5) node (b) {y}
      ;
      \path[tikzar,mm] 
      (ua) edge[bend left=40] node {qx} (ta)
      (ua) edge[bend right=35,swap] node {\chi} (b)
      (ta) edge[bend left=35] node {\psi} (b)
      ;
    }
    \boundary
    \draw[tikzob,mm] 
    (0.5,-0.6) node (sa) {sx}
    (sa) ++(0,.6) node {\cong \Theta x}
    (sa) ++(210:.3) node[rotate=45, 2label={below,H}] {\Rightarrow}
    (sa) ++(-40:.275) node[rotate=45, 2label={below,F}] {\Rightarrow}
    ;
    \path[tikzar,mm] 
    (ua) edge[swap] node {rx} (sa)
    (sa) edge[swap] node {px} (ta)
    (sa) edge node {\phy} (b)
    ;
    
    \draw (1.5,-.5) node {=};
    
    \begin{scope}[shift={(2,0)}]
      \boundary
      \draw[tikzob,mm] 
      (b) ++(0,1) node[rotate=45, 2label={below,G}] {\Rightarrow}
      ;
    \end{scope}
    \end{tikzpicture}
  \end{equation}
  holds, but since 2-cells are invertible in $X$ there is
  always such an $H$. 

  To show uniqueness, we assume there is another lift of the form
  $\<r',C',H'\>$ and we prove that it is equal to the lift we just
  constructed. By assumption, there exists a 2-cell isomorphism
  $\Theta'\cn p\circ r'\cong q$ satisfying the appropriate versions of
  \cref{eq:r_cond1,eq:r_cond2}. Since
  \[\<r,C\>=\rho\<r',C',H'\>=\<r',C'\>,\]
  there exists a 2-isomorphism $\La\cn r \cong r'$ that satisfies
  \cref{eq:icc} with respect to $C$ and $C'$. It remains to show that
  $\La$ satisfies \cref{eq:icc} with respect to $H$ and $H'$, which
  follows from the equality below.
  \[
  \begin{tikzpicture}[x=35mm,y=25mm,baseline={(0,16mm)}]
    \draw[tikzob,mm] 
    (0,1) node (u) {u}
    (1,1) node (s) {s}
    (.5,0) node (t) {t}
    (-.5,.5) node {=}
    ;
    \path[tikzar,mm] 
    (s) edge node {p} (t)
    (u) edge[swap] node {q} (t)
    (u) edge[bend left=25] node (r) {r} (s)
    ;
    \draw[2cell]
    node[between=r and t at .5, rotate=225, 2label={below,\Theta }] {\Rightarrow}
    ;
    
    \begin{scope}[shift={(-2,0)}]
    \draw[tikzob,mm] 
    (0,1) node (u) {u}
    (1,1) node (s) {s}
    (.5,0) node (t) {t}
    ;
    \path[tikzar,mm] 
    (s) edge node (p) {p} (t)
    (u) edge[swap] node (q) {q} (t)
    (u) edge[bend left=25] node (r) {r} (s)
    (u) edge[bend right=25,swap] node {r'} (s)
    ;
    \draw[2cell]
    node[between=p and q at .5, shift={(.05,-.01)}, rotate=225, 2label={below,\Theta' }] {\Rightarrow}
    node[between=u and s at .5, rotate=270, 2label={above,\;\La }] {\Rightarrow}
    ;
    \end{scope}
  \end{tikzpicture}
  \]
  This equality in turn follows from \cref{eq:r_cond1,eq:r_cond2}
  using invertibility of 1- and 2-cells in $S$ and faithful
  translations. Therefore $\< p, A, F \>$ is cartesian.
\end{proof}

\begin{cor}\label{cor:pi_fiber_vs_htpyfiber}
  Let $S$ be a permutative Gray monoid acting on a 2-category $X$. Assume that
  \begin{itemize}
  \item $S$ has faithful translations,
  \item $S$ has invertible 1- and 2-cells, and
  \item $X$ has invertible 2-cells.
  \end{itemize}
  Then for any 2-functor $F \cn Y \to S^\inv *$, we have a homotopy
  equivalence
  \[
  \pb(\rho,F) \to \laco{\rho,F}
  \]
  on classifying spaces. In particular this map induces an isomorphism
  on homotopy and homology groups.
\end{cor}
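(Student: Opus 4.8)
The plan is to derive this corollary directly from \cref{prop:fiber_vs_htpyfiber}, applied to the 2-functor $\rho$. To invoke that theorem with $P = \rho$ and codomain $L = S^\inv *$, I need exactly two inputs: that $\rho$ is an opfibration, and that every 2-cell of $S^\inv *$ is invertible. The first is precisely \cref{prop:pi_opfib}, whose hypotheses---faithful translations together with invertible 1- and 2-cells in $S$, and invertible 2-cells in $X$---are exactly those assumed here, so I would simply cite it (the permutative structure on $S$ is used implicitly through that proposition).

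For the second input I would specialize \cref{lem:iso_in_sinvs} to the case $X = *$. Since the terminal 2-category carries only identity cells, a 2-cell of $S^\inv *$ is an equivalence class of the form $\<p, A\>$ with no $X$-component, and \cref{lem:iso_in_sinvs} then says it is invertible if and only if $p$ is an equivalence in $S$ and $A$ is an isomorphism in $S$. Under the standing assumption that $S$ has invertible 1- and 2-cells, both conditions hold automatically, so every 2-cell of $S^\inv *$ is invertible.

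With these two facts established, \cref{prop:fiber_vs_htpyfiber} applies verbatim to the opfibration $\rho\cn S^\inv X \to S^\inv *$ and the given $F\cn Y \to S^\inv *$, yielding that the inclusion
\[
\pb(\rho, F) \to \laco{\rho, F}
\]
is a homotopy equivalence on classifying spaces. The concluding assertion about homotopy and homology groups is then immediate, since any homotopy equivalence of spaces induces isomorphisms on all such invariants. I do not expect a genuine obstacle in this argument: it is an assembly of results already in hand, and the only step requiring any attention is confirming that $S^\inv *$ has invertible 2-cells, which reduces mechanically to \cref{lem:iso_in_sinvs}.
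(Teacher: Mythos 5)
Your proposal is correct and matches the paper's own (one-line) proof, which simply combines \cref{prop:fiber_vs_htpyfiber} and \cref{prop:pi_opfib}. Your additional step of verifying via \cref{lem:iso_in_sinvs} that all 2-cells of $S^\inv *$ are invertible is a hypothesis the paper leaves implicit here (it is spelled out only later, in the proof of \cref{thm:main_technical}), so including it is a welcome bit of extra care rather than a deviation.
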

\begin{proof}
  This follows by combining \cref{prop:fiber_vs_htpyfiber} and \cref{prop:pi_opfib}.
\end{proof}

Before stating the main theorem, we note an explicit description of
the fibers of the projection $\rho$.
\begin{lem}\label{lem:pi_inv_is_X}
  For any $a \in S^\inv *$, there is an isomorphism $\rho^{-1}(a)
  \cong X$ commuting with the action of $S$.
\end{lem}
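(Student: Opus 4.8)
The plan is to write $\rho^\inv(a)$ down explicitly and exhibit an evident strict $2$-functor $\Psi\cn X \to \rho^\inv(a)$ that is invertible. First I would unpack the strict fiber. Since $\rho$ simply forgets the $X$-coordinate of each cell of $S^\inv X$ (sending $(a',x)\mapsto a'$, $(s,(\al,\phi))\mapsto(s,\al)$, and $\<p,(A,F)\>\mapsto\<p,A\>$), the objects of $\rho^\inv(a)$ are exactly the pairs $(a,x)$ with $x\in X$; its $1$-cells $(a,x)\to(a,y)$ are those $(s,(\al,\phi))$ mapping to the identity $1$-cell $(e,1_a)$ on $a$, i.e.\ with $s=e$, $\al=1_a$, and $\phi\cn x\to y$ an arbitrary $1$-cell of $X$; and its $2$-cells are the equivalence classes $\<p,(A,F)\>$ whose image $\<p,A\>$ equals the identity $2$-cell $\<1_e,1\>$ on $(e,1_a)$ in $S^\inv *$.

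I would then define $\Psi$ by $x\mapsto(a,x)$ on objects, $(\phi\cn x\to y)\mapsto(e,(1_a,\phi))$ on $1$-cells, and $(F\cn\phi\Rightarrow\phi')\mapsto\<1_e,(1_{1_a},F)\>$ on $2$-cells. Because the $S$-action is unital, the composition formula of \cref{prop:sinvx_is_2cat} restricts on the fiber to $(e,(1_a,\psi))\circ(e,(1_a,\phi))=(e,(1_a,\psi\circ\phi))$ with unit $1_{(a,x)}=(e,(1_a,1_x))$, and likewise for vertical and horizontal composition of $2$-cells; hence $\Psi$ is a \emph{strict} $2$-functor and is manifestly a bijection on objects and on $1$-cells.

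The heart of the argument is that $\Psi$ is a bijection on $2$-cells, equivalently that each induced functor $X(x,y)\to\rho^\inv(a)\big((a,x),(a,y)\big)$ is an isomorphism of categories. For surjectivity, given a fiber $2$-cell $\<p,(A,F)\>$, the equality $\<p,A\>=\<1_e,1\>$ supplies a witness $\Theta\cn p\iso 1_e$ in $S$ with $(1_{1_a}*(\Theta\oplus 1_a))\circ A=1$; feeding $\Theta$ into the equivalence relation \eqref{eq:icc} rewrites $\<p,(A,F)\>$ as $\<1_e,(1_{1_a},F')\>=\Psi(F')$, where $F'=(1_{\phi'}*\Theta x)\circ F$. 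For injectivity, suppose $\Psi(F)=\Psi(G)$; then \eqref{eq:icc} yields $\Theta\cn 1_e\iso 1_e$ satisfying $\Theta\oplus 1_a=1_{1_a}$ on the $S$-coordinate and $(1_{\phi'}*\Theta x)\circ F=G$ on the $X$-coordinate. Here the hypotheses of the section enter: the $2$-cells of $S$ are invertible, so $\Theta$ is an isomorphism, and $-\oplus a$ is faithful, being $2$-naturally isomorphic, via $\be_{-,a}$, to the faithful functor $a\oplus-$ of \cref{defn:faithful_trans}; thus $\Theta\oplus 1_a=1_{1_e}\oplus 1_a$ forces $\Theta=1_{1_e}$, whence $\Theta x=1_{1_x}$ and the $X$-coordinate equation collapses to $F=G$.

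Finally I would verify that $\Psi$ commutes with the $S$-action. The action $\xi(s,-)$ of \cref{lem:act-on-X} leaves the $S$-coordinate of every cell unchanged, so it restricts to $\rho^\inv(a)$; on the relevant cells it sends $(a,x)\mapsto(a,sx)$, $(e,(1_a,\phi))\mapsto(e,(1_a,s\phi))$, and $\<1_e,(1,F)\>\mapsto\<1_e,(1,sF)\>$ after using the unit symmetries $\be_{e,s}=1$, which is precisely $\Psi$ applied to $sx$, $s\phi$, and $sF$. The main obstacle is exactly the faithfulness step in the injectivity argument: without faithful translations there could be a nontrivial automorphism $\Theta$ of $1_e$ that becomes invisible after $-\oplus a$ yet acts nontrivially on $X$, so that $\Psi$ would only be a quotient map rather than an isomorphism; everything else is a routine check against the definitions.
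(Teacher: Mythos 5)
Your proof is correct and takes the same route as the paper's, which simply lists the cells of the strict fiber --- objects $(a,x)$, 1-cells $(e,(1_a,\phi))$, 2-cells $\langle 1_e,(1_{1_a},F)\rangle$ --- and identifies them with those of $X$. The only substantive difference is that you justify the bijection at the 2-cell level (rewriting an arbitrary fiber 2-cell $\langle p,(A,F)\rangle$ via the witness $\Theta\cn p\cong 1_e$, and using faithful translations transported through $\beta$ to get injectivity), a point the paper's three-line proof leaves implicit but which is needed and is consistent with the section's standing hypotheses.
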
 \begin{proof}
  Objects of $\rho^{-1}(a)$ are $(a,x)$ for $x \in X$.  1-cells $(a,x)
  \to (a,x')$ are $(e,(\id_a,\phi))$.  2-cells are $\langle \id_e,
  (\Id_{\id_a}, F)\rangle$.
\end{proof}
\cref{thm:main} is a special case the following, with $X = S$.
\begin{thm}\label{thm:main_technical}
  Let $S$ be a permutative Gray monoid acting on a 2-category $X$.
  Assume that
  \begin{itemize}
  \item $S$ has faithful translations,
  \item $S$ has invertible 1- and 2-cells, and
  \item $X$ has invertible 2-cells.
  \end{itemize}
  Then the inclusion $i\cn X \to S^\inv X$ induces an isomorphism
  \[
  [\pi_0 S]^{-1} H_q(X) \fto{\iso} H_q(S^\inv X).
  \]
  In particular, when $X=S$, we obtain that $i\cn S \to S^\inv S$ is a
  group completion on classifying spaces.
\end{thm}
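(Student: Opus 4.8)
The plan is to run the homology spectral sequence of the opfibration $\rho\cn S^\inv X \to S^\inv *$ and show that, after localizing at the commutative monoid $\pi_0 S$, it degenerates onto its bottom row. First I would check that the hypotheses of \cref{thm:ss} hold: $\rho$ is an opfibration by \cref{prop:pi_opfib}, and all $2$-cells of $S^\inv *$ are invertible by the $X=*$ case of \cref{lem:iso_in_sinvs}, since $S$ has invertible $1$- and $2$-cells. Thus \cref{thm:ss} supplies a spectral sequence
\[
E^2_{p,q} = H_p\big(S^\inv * \coef \sH_q \rho^\inv\big) \Rightarrow H_{p+q}(S^\inv X).
\]
I would then identify the coefficient system. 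By \cref{lem:pi_inv_is_X} every fiber $\rho^\inv(a)$ is isomorphic to $X$, so $\sH_q \rho^\inv$ takes the constant value $H_q(X)$ on objects; and by \cref{rmk:Finv-base-change,notn:Finv-loc-coeff} its transition map along a $1$-cell $(s,\al)$ of $S^\inv *$ is the map on $H_q$ induced by the translation $\mu(s,-)\cn X \to X$, i.e. multiplication by $[s]\in\pi_0 S$. Hence $\sH_q \rho^\inv$ is a local system of $\pi_0 S$-modules whose structure maps are exactly multiplication by elements of $\pi_0 S$.

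The crux is to force a collapse by localizing the entire spectral sequence at $\pi_0 S$. The action $\xi$ of \cref{lem:act-on-X} covers the identity on $S^\inv *$—on $S^\inv *$ itself the action is trivial—and restricts on each fiber $\rho^\inv(a)\cong X$ to $\mu(s,-)$, so $\rho$ is $S$-equivariant and the whole spectral sequence is one of $\pi_0 S$-modules, with the action on $E^2$ agreeing with the module structure above and converging to the $\xi$-action on $H_*(S^\inv X)$. By \cref{prop:action-on-X-invertible} each $\xi(s,-)$ is a homotopy equivalence, so $\pi_0 S$ acts invertibly on $H_*(S^\inv X)$; the abutment is already $[\pi_0 S]^{-1}$-local. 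Since localization at $\pi_0 S$ is exact it preserves the spectral sequence and its convergence, yielding
\[
[\pi_0 S]^{-1} E^2_{p,q} \Rightarrow [\pi_0 S]^{-1} H_{p+q}(S^\inv X) = H_{p+q}(S^\inv X).
\]
Localization commutes with the homology of the defining twisted chain complex, so $[\pi_0 S]^{-1} E^2_{p,q} = H_p\big(S^\inv * \coef [\pi_0 S]^{-1}\sH_q \rho^\inv\big)$, and the localized system $[\pi_0 S]^{-1}\sH_q \rho^\inv$ is now \emph{morphism-inverting} with constant value $[\pi_0 S]^{-1} H_q(X)$. By \cref{lem:morphism-inverting-coeff} its homology agrees with that of the contractible space $|N S^\inv *|$ (\cref{lem:s-inv-pt-contract}) equipped with the induced, necessarily constant, local system, hence is concentrated in degree $p=0$ with value $[\pi_0 S]^{-1} H_q(X)$.

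Therefore the localized spectral sequence lives in the single column $p=0$ and collapses, giving
\[
H_n(S^\inv X) \iso [\pi_0 S]^{-1} E^2_{0,n} \iso [\pi_0 S]^{-1} H_n(X).
\]
Tracing the edge homomorphism back through the inclusion of the fiber $\rho^\inv(e)\cong X$ identifies this isomorphism with the one induced by $i\cn X \to S^\inv X$, proving the first claim; as the same argument runs verbatim over any commutative ring $k$, the homology condition of \cref{defn:gp_comp} holds for all $k$. For the final sentence I would specialize to $X=S$: by \cref{prop:S-inv-S-pgm} the space $S^\inv S$ is grouplike, so $\pi_0(S^\inv S)$ is a group, and the degree-zero instance of the homology isomorphism identifies it with the group completion of $\pi_0 S$; together these say $i$ is a homotopy group completion in the sense of \cref{defn:gp_comp}.

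The step I expect to be the main obstacle is the bookkeeping that legitimizes the localization: verifying that $\rho$ is genuinely $S$-equivariant with trivial action on $S^\inv *$, that the fiberwise $\xi$-action matches the module structure produced by \cref{notn:Finv-loc-coeff} (so the $\pi_0 S$-action on $E^2$ really is the one whose invertibility on the abutment comes from \cref{prop:action-on-X-invertible}), and that this action is by isomorphisms on the whole page so that exact localization may be applied compatibly with convergence. Pinning down the identification of the base-change transition maps with multiplication by $[s]$, via the explicit opcartesian lifts constructed in \cref{prop:pi_opfib}, is the technical heart on which the collapse rests.
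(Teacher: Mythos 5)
Your proposal is correct and follows essentially the same route as the paper: run the spectral sequence of \cref{thm:ss} for the opfibration $\rho\cn S^\inv X \to S^\inv *$ (via \cref{prop:pi_opfib} and \cref{lem:iso_in_sinvs}), identify the coefficient system as $H_q(X)$ with transitions given by translations (\cref{lem:pi_inv_is_X}, \cref{rmk:Finv-base-change}), localize exactly at $\pi_0 S$ to make the system morphism-inverting, and collapse using contractibility of $|NS^\inv *|$ (\cref{lem:morphism-inverting-coeff}, \cref{lem:s-inv-pt-contract}), with the edge map identified as $i_*$. You even make explicit two points the paper leaves implicit — that the abutment is already local by \cref{prop:action-on-X-invertible}, and that the argument runs over any commutative ring $k$ — while the bookkeeping you flag as the main obstacle is handled in the paper by an explicit $\ob S$-action on the bisimplicial set $\bsimp$ together with chain homotopies induced by 1-cells of $S$.
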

\begin{proof}
  The projection
  \[
  \rho \cn S^\inv X \to S^\inv *
  \]
  is an opfibration by \cref{prop:pi_opfib}.  Since all the 1- and
  2-cells of $S$ are invertible, \cref{lem:iso_in_sinvs} implies that
  $S^\inv *$ has invertible 2-cells. Thus, we have the (simplicial)
  local coefficient system $\sH_q \rho^\inv$ on $S^\inv *$ of
  \cref{notn:Finv-loc-coeff}, and we consider the spectral sequence
  \[
  E^2_{p,q}=H_p\big( S^\inv * \coef \sH_q\rho^\inv \big) \;\Rightarrow\; H_{p+q}(S^\inv X)
  \]
  of \cref{thm:ss} for $F=\rho$.

  Let $E^0$ be the double complex associated to the bisimplicial set
  $\bsimp$ constructed in \cref{defn:E0}.  Since $S$ is a permutative
  Gray monoid, the set of objects $\ob S$ becomes a monoid by
  restriction. The actions of $S$ on $S^\inv X$ and $S^\inv *$ of
  \cref{lem:act-on-X}, the latter trivial, induce an action of $\ob S$
  on $\bsimp$; explicitly, $s \cdot (\om, \de, \si) = (s \om, s \de, s
  \si)$.  A 1-cell $s \to t$ gives rise to a triple of pseudonatural
  transformations $(s\om,s\de,s\si) \to (t\om,t\de,t\si)$ and hence to
  a chain homotopy between the corresponding chain maps $s \cdot -$
  and $t \cdot -$ on $\mathrm{Tot}$. Therefore $\pi_0 S$ acts on the
  homology of the total complex.

  By \cref{lem:pi_inv_is_X} we have an isomorphism of $\pi_0
  S$-modules
  \[
  \sH_q\rho^\inv(\si)=H_q\Big(\rho^\inv\big(\si(0)\big)\Big) \iso H_q(X)
  \]
  for all $\si$ and all $q$. We can thus define a local coefficient
  system $M_q$ on $S^\inv *$ by setting $M_q(\si)=H_q(X)$ and using
  the isomorphism above to define the action on morphisms.  It is
  important to remark that as a coefficient system, $M_q$ is generally
  not constant. Recall that $\si$ consists of objects $\si_i$ and
  1-cells $\si_{(i,j)}=(s_{(i,j)},\al_{(i,j)})$ in $S^\inv *$, for
  $i<j\in [p]$. Let $\phi \cn [p'] \to [p]$ in $\De$. Using
  \cref{rmk:Finv-base-change} and tracing through the isomorphism in
  \cref{lem:pi_inv_is_X}, one can check that the map
  \[
  M_q(\phi)\cn M_q(\si) \to M_q(\phi^*\si)
  \]
  is given by the action of $s_{0,\phi(0)}$ on $X$.

  Because $S$ acts trivially on $S^\inv *$, the action of $\ob S$ on
  $E^0$ descends to an action of $\pi_0 S$ on $E^2$ and can be
  identified with the action on $M_q = H_q(X)$ induced by the action
  of $S$ on $X$.
  Localizing with respect to the action of $\pi_0 S$ is exact, and thus we have
  \[
  [\pi_0S]^\inv E^2_{p,q} \cong H_p\Big(S^\inv * \coef [\pi_0S]^\inv  M_q  \Big)
  \;\Rightarrow\; [\pi_0S]^\inv H_{p+q}(S^\inv X).
  \]
  The edge homomorphism of this localized spectral sequence is the
  localization of the map induced on homology by $i\cn X \to S^\inv X$
  in \cref{prop:sinvx_is_2cat}.

  Consider the coefficient system $L_q$ given by
  \[
  L_q(\si) =  [\pi_0S]^\inv H_q(X)
  \]
  for all $\si$. Since it is morphism-inverting, by
  \cref{lem:morphism-inverting-coeff} it induces a topological local
  coefficient system on $|NS^\inv *|$, also denoted $L_q$.  Thus we
  have
  \[
  H_p(S^\inv * \coef L_q) \fto{\iso} H_p(|NS^\inv *| \coef L_q)
  \]
  for all $p$ and $q$.  Next we recall that $|NS^\inv *|$ is
  contractible by \cref{lem:s-inv-pt-contract}, and therefore
  $H_p(|NS^\inv *| \coef L_q)$ is 0 for all $p>0$.  Thus the localized
  spectral sequence collapses and the edge map induces an isomorphism
  \[
  [\pi_0 S]^{-1} H_q(X) \fto{\iso} H_q(S^\inv X).\qedhere
  \]
\end{proof}



\begin{thebibliography}{GJOS17}

\bibitem[Ara19]{Ara19QuillenB}
D.~Ara, \emph{A {Q}uillen theorem {B} for strict {$\infty$}-categories}, J.
  Lond. Math. Soc. (2) \textbf{100} (2019), no.~2, 470--497.
  \doi{10.1112/jlms.12220} \arxiv{1808.02650}

\bibitem[AM18]{AM18QuillenAI}
D.~Ara and G.~Maltsiniotis, \emph{Un th\'{e}or\`eme {A} de {Q}uillen pour les
  {$\infty$}-cat\'{e}gories strictes {I}: la preuve simpliciale}, Adv. Math.
  \textbf{328} (2018), 446--500. \doi{10.1016/j.aim.2018.01.018}
  \arxiv{1703.04689}

\bibitem[AM20]{AM20QuillenAII}
\bysame, \emph{Un th\'{e}or\`eme {A} de {Q}uillen pour les
  {$\infty$}-cat\'{e}gories strictes {II}: {L}a preuve
  {$\infty$}-cat\'{e}gorique}, High. Struct. \textbf{4} (2020), no.~1,
  284--388. \arxiv{1804.03241}

\bibitem[Bak07]{bakovicFib}
I.~Bakovi\'{c}, \emph{Fibrations of bicategories}, Available at
  \url{https://www2.irb.hr/korisnici/ibakovic/groth2fib.pdf}, 2007.

\bibitem[Bar61]{Bar1961Note}
M.~G. Barratt, \emph{A note on the cohomology of semigroups}, J. London Math.
  Soc. \textbf{36} (1961), 496--498. \doi{10.1112/jlms/s1-36.1.496}

\bibitem[BP72]{BP1972Homology}
M.~Barratt and S.~Priddy, \emph{On the homology of non-connected monoids and
  their associated groups}, Comment. Math. Helv. \textbf{47} (1972), 1--14.
  \doi{10.1007/BF02566785}

\bibitem[B{\'e}n67]{benabou}
J.~B\'{e}nabou, \emph{Introduction to bicategories}, Reports of the {M}idwest
  {C}ategory {S}eminar, Springer, Berlin, 1967, pp.~1--77.
  \doi{10.1007/BFb0074299}

\bibitem[BG16a]{BG2016AlgebraicI}
J.~Bourke and R.~Garner, \emph{Algebraic weak factorisation systems {I}:
  {A}ccessible {AWFS}}, J. Pure Appl. Algebra \textbf{220} (2016), no.~1,
  108--147. \doi{10.1016/j.jpaa.2015.06.002} \arxiv{1412.6559}

\bibitem[BG16b]{BG2016AlgebraicII}
\bysame, \emph{Algebraic weak factorisation systems {II}: {C}ategories of weak
  maps}, J. Pure Appl. Algebra \textbf{220} (2016), no.~1, 148--174.
  \doi{10.1016/j.jpaa.2015.06.003} \arxiv{1412.6560}

\bibitem[Buc14]{Buc14Fibred}
M.~Buckley, \emph{Fibred 2-categories and bicategories}, J. Pure Appl. Algebra
  \textbf{218} (2014), no.~6, 1034--1074. \doi{10.1016/j.jpaa.2013.11.002}
  \arxiv{1212.6283}

\bibitem[BC03]{BC03Geometry}
M.~Bullejos and A.~M. Cegarra, \emph{On the geometry of 2-categories and their
  classifying spaces}, $K$-Theory \textbf{29} (2003), no.~3, 211--229.
  \doi{10.1023/B:KTHE.0000006921.50151.00}

\bibitem[CCH14]{CCH14Bicategorical}
M.~Calvo, A.~M. Cegarra, and B.~A. Heredia, \emph{Bicategorical homotopy fiber
  sequences}, J. Homotopy Relat. Struct. \textbf{9} (2014), no.~1, 125--173.
  \doi{10.1007/s40062-013-0059-y} \arxiv{1305.3750}

\bibitem[CCG10]{CCG10Nerves}
P.~Carrasco, A.~M. Cegarra, and A.~R. Garz{\'o}n, \emph{Nerves and classifying
  spaces for bicategories}, Algebr. Geom. Topol. \textbf{10} (2010), no.~1,
  219--274. \doi{10.2140/agt.2010.10.219} \arxiv{0903.5058}

\bibitem[Ceg11]{Ceg11Homotopy}
A.~M. Cegarra, \emph{Homotopy fiber sequences induced by 2-functors}, J. Pure
  Appl. Algebra \textbf{215} (2011), no.~4, 310--334.
  \doi{10.1016/j.jpaa.2010.04.022} \arxiv{0909.4229}

\bibitem[Chi14]{chiche}
J.~Chiche, \emph{{La th\'eorie de l'homotopie des 2-cat\'egories}}, 2014.
  \arxiv{1411.6936}

\bibitem[GJ10]{GJ10Simplicial}
P.~G. {Goerss} and J.~F. {Jardine}, \emph{{Simplicial homotopy theory}},
  reprint of the 1999 original ed., Basel: Birkh\"auser, 2010 (English).
  \doi{10.1007/978-3-0346-0189-4}

\bibitem[GPS95]{GPS95Coherence}
R.~Gordon, A.~J. Power, and R.~Street, \emph{Coherence for tricategories}, vol.
  117, Mem. Amer. Math. Soc., no. 558, Amer. Math. Soc., 1995.
  \doi{10.1090/memo/0558}

\bibitem[GT06]{GT2006Natural}
M.~{Grandis} and W.~{Tholen}, \emph{{Natural weak factorization systems}},
  {Arch. Math., Brno} \textbf{42} (2006), no.~4, 397--408 (English).

\bibitem[Gra74]{Gra74Formal}
J.~W. Gray, \emph{Formal category theory: adjointness for {$2$}-categories},
  Lecture Notes in Mathematics, vol. 391, Springer-Verlag, Berlin-New York,
  1974. \doi{10.1007/BFb0061280}

\bibitem[Gra80a]{Gray80Closed}
\bysame, \emph{Closed categories, lax limits and homotopy limits}, J. Pure
  Appl. Algebra \textbf{19} (1980), 127--158.
  \doi{10.1016/0022-4049(80)90098-5}

\bibitem[Gra80b]{Gray80Existence}
\bysame, \emph{The existence and construction of lax limits}, Cahiers Topologie
  G\'{e}om. Diff\'{e}rentielle \textbf{21} (1980), no.~3, 277--304.

\bibitem[Gra76]{Gra1976Higher}
D.~Grayson, \emph{Higher algebraic {$K$}-theory: {II} (after {D}aniel
  {Q}uillen)}, Algebraic {$K$}-theory ({P}roc. {C}onf., {N}orthwestern {U}niv.,
  {E}vanston, {I}ll., 1976), Springer, Berlin, 1976, pp.~217--240. Lecture
  Notes in Math., Vol. 551. \doi{10.1007/BFb0080003}

\bibitem[SGA1]{sga1}
A.~Grothendieck, \emph{Categories fibrees et descente}, Rev{\^e}tements Etales
  et Groupe Fondamental. Lecture Notes in Math. 224 (Berlin, Heidelberg),
  Springer Berlin Heidelberg, 1971, pp.~145--194. \doi{10.1007/BFb0058662}

\bibitem[Gur09]{Gur2009Nerves}
N.~Gurski, \emph{Nerves of bicategories as stratified simplicial sets}, J. Pure
  Appl. Algebra \textbf{213} (2009), no.~6, 927--946.
  \doi{10.1016/j.jpaa.2008.10.011}

\bibitem[Gur13]{Gurski13Coherence}
\bysame, \emph{Coherence in three-dimensional category theory}, Cambridge
  Tracts in Mathematics, vol. 201, Cambridge University Press, Cambridge, 2013.
  \doi{10.1017/CBO9781139542333}

\bibitem[GJO17]{GJO2017KTheory}
N.~Gurski, N.~Johnson, and A.~M. Osorno, \emph{K-theory for 2-categories},
  Advances in Mathematics \textbf{322} (2017), 378--472.
  \doi{10.1016/j.aim.2017.10.011} \arxiv{1503.07824}

\bibitem[GJO19]{GJO2019dimensional}
N.~Gurski, N.~Johnson, and A.~M. Osorno, \emph{The 2-dimensional stable
  homotopy hypothesis}, J. Pure Appl. Algebra \textbf{223} (2019), no.~10,
  4348--4383. \doi{10.1016/j.jpaa.2019.01.012} \arxiv{1712.07218}

\bibitem[GJOS17]{GJOS2017Postnikov}
N.~Gurski, N.~Johnson, A.~M. Osorno, and M.~Stephan, \emph{Stable {P}ostnikov
  data of {P}icard 2-categories}, Algebr. Geom. Topol. \textbf{17} (2017),
  no.~5, 2763--2806. \doi{10.2140/agt.2017.17.2763} \arxiv{1606.07032}

\bibitem[Her99]{Her99Some}
C.~Hermida, \emph{Some properties of {${\bf Fib}$} as a fibred {$2$}-category},
  J. Pure Appl. Algebra \textbf{134} (1999), no.~1, 83--109.
  \doi{10.1016/S0022-4049(97)00129-1}

\bibitem[JY21]{johnson-yau}
N.~Johnson and D.~Yau, \emph{{2}-{D}imensional {C}ategories}, Oxford University
  Press, New York, 2021. \doi{10.1093/oso/9780198871378.001.0001}
  \arxiv{2002.06055}

\bibitem[Lac10]{Lac10Companion}
S.~Lack, \emph{A 2-categories companion}, Towards higher categories, IMA Vol.
  Math. Appl., vol. 152, Springer, New York, 2010, pp.~105--191.
  \doi{10.1007/978-1-4419-1524-5\_4} \arxiv{math/0702535}

\bibitem[May74]{May1974Einfty}
J.~P. May, \emph{{$E_{\infty }$} spaces, group completions, and permutative
  categories}, New developments in topology ({P}roc. {S}ympos. {A}lgebraic
  {T}opology, {O}xford, 1972), London Math. Soc. Lecture Note Ser, vol.~11,
  Cambridge Univ. Press, London, 1974, pp.~61--93.
  \doi{10.1017/CBO9780511662607.008}

\bibitem[MS76]{McDS75Homology}
D.~McDuff and G.~Segal, \emph{Homology fibrations and the ``group-completion''
  theorem}, Invent. Math. \textbf{31} (1975/76), no.~3, 279--284.
  \doi{10.1007/BF01403148}

\bibitem[{Qui}73]{Qui1973Higher}
D.~{Quillen}, \emph{Higher algebraic {$K$}-theory: {I}}, Higher $K$-{T}heories
  (H.~Bass, ed.), Lecture Notes in Mathematics, vol. 341, Springer, Berlin,
  1973, pp.~85--147 (English). \doi{10.1007/BFb0067053}

\bibitem[SP11]{SP2011Classification}
C.~Schommer-Pries, \emph{The classification of two-dimensional extended
  topological field theories}, 2011. \arxiv{1112.1000v2}

\bibitem[Seg74]{Seg74Categories}
G.~Segal, \emph{Categories and cohomology theories}, Topology \textbf{13}
  (1974), 293--312. \doi{10.1016/0040-9383(74)90022-6}

\bibitem[Str82]{Str82Nerves}
R.~Street, \emph{Higher dimensional nerves}, Available at
  \url{http://science.mq.edu.au/~street/HDN1982.pdf}, 1982.

\bibitem[Str87]{Str87Algebra}
R.~Street, \emph{The algebra of oriented simplexes}, J. Pure Appl. Algebra
  \textbf{49} (1987), no.~3, 283--335 (English).
  \doi{10.1016/0022-4049(87)90137-X}

\bibitem[Tho80]{Tho80Phony}
R.~W. Thomason, \emph{Beware the phony multiplication on {Q}uillen's {${\scr
  A}^{-1}{\scr A}$}}, Proc. Amer. Math. Soc. \textbf{80} (1980), no.~4,
  569--573. \doi{10.2307/2043425}

\bibitem[Wei13]{Wei13KBook}
C.~A. Weibel, \emph{{The \(K\)-book. {A}n {I}ntroduction to {A}lgebraic
  \(K\)-theory}}, {Graduate Studies in Mathematics}, vol. 145, Providence, RI:
  American Mathematical Society (AMS), 2013 (English). \doi{10.1090/gsm/145}

\bibitem[Whi78]{Whi78Elements}
G.~W. Whitehead, \emph{{Elements of Homotopy Theory}}, {Graduate Texts in
  Mathematics}, vol.~61, Springer, New York, NY, 1978 (English).
  \doi{10.1007/978-1-4612-6318-0}

\end{thebibliography}
\providecommand{\bysame}{\leavevmode\hbox to3em{\hrulefill}\thinspace}
\providecommand{\MR}{\relax\ifhmode\unskip\space\fi MR }
\providecommand{\MRhref}[2]{%
  \href{http://www.ams.org/mathscinet-getitem?mr=#1}{#2}
}
\providecommand{\doi}[1]{%
  doi:\href{https://dx.doi.org/#1}{\nolinkurl{#1}}}
\providecommand{\arxiv}[1]{%
  arXiv:\href{https://arxiv.org/abs/#1}{#1}}

\end{document}